\numberwithin{equation}{section}
\newtheorem{theorem}{Theorem}[section]
\newtheorem{lemma}[theorem]{Lemma}
\newtheorem{proposition}[theorem]{Proposition}
\newtheorem{corollary}[theorem]{Corollary}
\theoremstyle{definition}
\newtheorem{definition}[theorem]{Definition}
\newtheorem{remark}[theorem]{Remark}
\newcommand{\restr}{\mathop{\raisebox{-.127ex}{\reflectbox{\rotatebox[origin=br]{-90}{$\lnot$}}}}}
\newcommand{\R}{\mathbb{R}}
\newcommand{\N}{\mathbb{N}}
\newcommand{\C}{\mathbf{C}}
\newcommand{\eps}{\varepsilon}
\newcommand{\be}{\begin{equation}}
\newcommand{\ee}{\end{equation}}
\newcommand\lt{\left}
\newcommand\rt{\right}
\def\les{\lesssim}
\def\ges{\gtrsim}
\def \C{\mathbf{C}}
\def\EE{\mathbb{E}}
\def\PP{\mathbb{P}}
\def\diam{\operatorname{diam}}
\newcommand{\bra}[1]{\left( #1 \right)}
\newcommand{\sqa}[1]{\left[ #1 \right]}
\newcommand{\cur}[1]{\left\{ #1 \right\}}
\newcommand{\abs}[1]{\left| #1 \right|}
\newcommand{\nor}[1]{\left\| #1 \right\|}
\newcommand{\Lip}{\operatorname{Lip}}
\newcommand{\Wb}{Wb}
\newcommand{\bTSP}{b\operatorname{TSP}}
\newcommand{\TSP}{\operatorname{TSP}}
\renewcommand{\C}{\mathcal{C}}
\newcommand{\meas}{\mathfrak{m}}
\title{On the quadratic random matching problem in two-dimensional domains}
\thanks{L.A.\ thanks the support of the PRIN 2017 project ``Gradient Flows, Optimal Transport and Metric Measure Structures''.
 M.G.\ was partially supported by the  project 
ANR-18-CE40-0013 SHAPO financed by the French Agence Nationale de la Recherche (ANR).   D.T.\ was partially supported by Gnampa project 2020 ``Problemi di ottimizzazione con vincoli via trasporto ottimo e incertezza''.}
\author[L. Ambrosio]{Luigi Ambrosio}
\address{L.A.: Scuola Normale Superiore, 56126 Pisa, Italy}
\email{luigi.ambrosio@sns.it}
\author[M. Goldman]{Michael Goldman}
\address{M.G.: Universit\'e de Paris, CNRS, Sorbonne-Universit\'e,  Laboratoire Jacques-Louis Lions (LJLL), F-75005 Paris, 
France}
\email{goldman@u-paris.fr}
\author[D. Trevisan]{Dario Trevisan}
\address{D.T.: Dipartimento di Matematica, Università degli Studi di Pisa, 56125 Pisa, Italy  }
\email{dario.trevisan@unipi.it}
\date{}
\subjclass[2010]{60D05, 90C05, 39B62, 60F25, 35J05}
\keywords{Matching problem, optimal transport, geometric probability}
\begin{document}

\maketitle

\begin{abstract}
We investigate the average minimum cost of a bipartite matching, with respect to the squared Euclidean distance,  between two samples of $n$ i.i.d.\ random points on a bounded Lipschitz domain in the Euclidean plane, whose common law is absolutely continuous with strictly positive H\"older continuous density. We confirm in particular the validity of a conjecture by D.~Benedetto and E.~Caglioti stating that the asymptotic cost as $n$ grows is given by the logarithm of $n$ multiplied by an explicit constant times the volume of the domain. Our proof relies on a reduction to the optimal transport problem  between the associated empirical measures and a Whitney-type decomposition of the domain, together with suitable upper and lower bounds for local and global contributions, both ultimately based on PDE tools. We further show how to extend our results to more general settings, including Riemannian manifolds, and also give an application to the asymptotic cost of the random quadratic bipartite travelling salesperson problem.
\end{abstract}

\section{Introduction}

The minimum weight perfect matching problem on bipartite graphs, also called assignment problem, is a combinatorial optimization problem which has been historically subject of intense research by several communities, well beyond operation research and algorithm theory, including combinatorics and graph theory \cite{lovasz2009matching}, probability and statistics \cite{Ta14} and even theoretical physics \cite{mezard1987spin, mezard2009information}. Applications related to planning and allocation of resources are classical, but recently they have seen an increased interest, e.g.\ in the online version of the problem, related to Internet advertising \cite{41870}. The assignment problem and its common linear programming relaxation, the optimal transport problem, provide also useful tools in machine learning and data science \cite{arjovsky2017wasserstein, peyre2019computational}, mostly because of its efficiency and versatility at discriminating between empirical distributions.  

In this article, we focus on the Euclidean formulation of the problem, where we are given two finite families of points $\bra{x_i}_{i=1}^n$, $\bra{y_j}_{j=1}^m \subseteq \R^d$, with $m \ge n$, and  the matching cost is defined, for a given parameter $p>0$,
\begin{equation}\label{eq:assignment} \min_{\sigma \in \mathcal{S}_{n,m}} \sum_{i=1}^n | x_i - y_{\sigma(i)} |^p,\end{equation}
where $\mathcal{S}_{n,m}$ denote the set of injective maps $\sigma: \cur{1,\ldots, n} \to \cur{1, \ldots, m}$. This corresponds to a minimum weight perfect matching problem
on the complete bipartite graph  and edge weights $w_{ij} = |x_i-y_j|^p$. In particular, when $n=m$, $\mathcal{S}_{n,n} = \mathcal{S}_{n}$ is the symmetric group of permutations over $n$ elements. 

A  linear programming approximation of \eqref{eq:assignment} is given by the Monge-Kantorovich optimal transport problem, where $\sigma$ is replaced by a transport plan or coupling, i.e., a matrix $(\pi_{ij})_{i=1,\ldots,n}^{j=1,\ldots,m} \in [0,1]^{n\times m}$ with $\sum_{i} \pi_{ij} = 1/m$, $\sum_{j} \pi_{ij} = 1/n$. Minimization among all couplings yields a particular instance of the Wasserstein cost of order $p>0$ between the empirical measures  $\frac 1 n \sum_{i=1}^n \delta_{x_i}$ and $\frac 1 m \sum_{j=1}^m \delta_{y_j}$,
\begin{equation}\label{eq:wasserstein-intro}  
\min_{\pi} \sum_{i=1}^{n}\sum_{j=1}^m \pi_{ij} |x_i - y_j|^p = W_p^p\bra{ \frac 1 n \sum_{i=1}^n \delta_{x_i}, \frac 1 m \sum_{j=1}^m \delta_{y_j}}.
\end{equation}
The Birkhoff-von Neumann  theorem \cite{lovasz2009matching} provides an exact  correspondence between  \eqref{eq:assignment} and \eqref{eq:wasserstein-intro} when $n=m$, i.e.,  both the costs and optimizers are the same, the optimal plan is as a permutation matrix (both up to a factor $1/n$). As a consequence, one can exploit the rich analytic structure of optimal transport and  use tools from  convex analysis and partial differential equations.

We further assume that the points $x_i = X_i$, $y_j = Y_j$ are samples of independent identically distributed (i.i.d.)~random variables with a common law. This is the paradigm of many problems in geometric probability, in particular related to the theory of Euclidean additive functionals \cite{steele1997probability, Yu98}. The random Euclidean bipartite matching problem is in fact known to be quite challenging: general results and techniques that give precise results in similar problems (e.g.\ the non-bipartite matching problem) fail here, most notably when the dimension of the space is small. 

The ``critical'' case turns out to be $d=2$, which we also consider in this work. In this case, Ajtai, Koml\'os and Tusn\'ady \cite{AKT84} first showed that for i.i.d.\ uniform samples on the square $(0,1)^2$ the asymptotics in the case $n=m \to \infty$ reads, for\footnote{
We use  the notation $A\sim B$ if both $A\les B$ and $B\les A$ hold, where in turn $A\les B$ means that there exists a global constant $C = C(d,p,\Omega)>0$ depending only on the dimension $d$, $p\ge 1$ and on the fixed domain $\Omega$ such that $A\le C B$.}  $p \ge 1$,
\begin{equation}\label{eq:AKT} \EE\sqa{ W_p^p\bra{ \frac 1 n \sum_{i=1}^n \delta_{X_i}, \frac 1 n \sum_{i=1}^n \delta_{Y_i}}} \sim
 \bra{ \frac{\log n}{n}}^{p/2}. \end{equation}
Talagrand \cite{talagrand1992ajtai} subsequently investigated the case of general laws, possibly not even absolutely continuous, providing a universal upper bound for $p=1$, with the same rate.

The parameter $p>0$ plays a relevant role, since the solution to both problems \eqref{eq:assignment} and \eqref{eq:wasserstein-intro} in general depend on $p$, and for $d=2$, $p<1$ the correct rate in \eqref{eq:AKT} turns out to be $n^{-p/2}$, i.e. no  logarithmic corrections appear \cite{BaBo}. The classical literature focused mostly on the case $p=1$ -- but also on the case $p=\infty$, see \cite{leighton1989tight, trillos2015rate}.

A recent breakthrough in  the  quadratic case $p=2$ was obtained by the statistical physics community, starting from the seminal work \cite{CaLuPaSi14} and further developed in \cite{CaSi15, sicuro_euclidean_2017}. By formally linearising the Monge-Amp\`ere equation around the constant density to obtain the Poisson equation, they argued in particular that the optimal transport cost should be  well-approximated by a negative Sobolev norm of the difference of the empirical measures. After a renormalization procedure to cut-off divergences, this led for uniform points on $(0,1)^2$ to the conjecture
\begin{equation}\label{eq:caracciolo} \lim_{ n\to \infty}  \frac{n}{\log n} \EE\sqa{ W_2^2\bra{ \frac 1 n \sum_{i=1}^n \delta_{X_i}, \frac 1 n \sum_{i=1}^n \delta_{Y_i}}} = \frac{1}{2 \pi}.\end{equation}
 A rigorous mathematical proof of \eqref{eq:caracciolo} was later obtained in \cite{AST}. Further  simplifications and improvements of this method have been done in \cite{AmGlau},  leading to quantitative bounds for optimizers \cite{ambrosio2019optimal}. See also \cite{ goldman2021fluctuation}  for a justification of the linearisation ansatz of \cite{CaLuPaSi14} down to mesoscopic scales  based on a large-scale regularity theory for the Monge-Amp\`ere equation \cite{GHO}. 
 Let us point out that for $p \ge 1$, with the exception of $p = 2$, even in the case of uniform points on $(0,1)^2$ it is still an open problem \cite[Section 4.3.3]{Ta14} to determine if the limit in \eqref{eq:AKT} exists, i.e. prove the existence of 
\[ \lim_{ n\to \infty} \bra{ \frac{n}{\log n}}^{p/2} \EE\sqa{ W_p^p\bra{ \frac 1 n \sum_{i=1}^n \delta_{X_i}, \frac 1 n \sum_{i=1}^n \delta_{Y_i}}}.\]

A a natural question is how \eqref{eq:caracciolo} should be modified when the uniform law is replaced by a different density. As a first step in this direction, it is proven in \cite{AmGlau,AST}  that \eqref{eq:caracciolo} holds on general closed compact Riemannian manifolds $(M, g)$, when the cost is the square of  the Riemannian distance and the law of the samples is the normalized Riemannian volume measure. This however does not even cover  the case of a uniform measure on a general convex set $M = \Omega \subseteq \R^2$, because of the presence of the boundary. For non-convex sets, an additional problem is that the Riemannian distance is different from the Euclidean one. On the other side, it was recently conjectured in  \cite[Conjecture 2]{BeCa} that for every bounded connected open set  $\Omega\subset \R^2$ with  smooth boundary and every  smooth, uniformly positive and bounded density with respect to the  Lebesgue, the limit \eqref{eq:caracciolo} holds true with the right-hand side multiplied by $|\Omega|$. A rigorous proof of the upper bound  
\begin{equation}\label{eq:upperbound} \limsup_{ n\to \infty}  \frac{n}{\log n} \EE\sqa{ W_2^2\bra{ \frac 1 n \sum_{i=1}^n \delta_{X_i}, \frac 1 n \sum_{i=1}^n \delta_{Y_i}}} \le \frac{|\Omega|}{2 \pi},\end{equation}
has been obtained in  \cite[Theorem 1]{BeCa} under the hypothesis that  $\Omega = (0,1)^2$ and that the common law of the points is absolutely continuous with a uniformly strictly positive and bounded Lipschitz density. Further conjectures on higher order terms in the asymptotic expansion of \eqref{eq:caracciolo} can be found in \cite{benedetto2021random}. Unfortunately, a rigorous mathematical justification  of these predictions  seems currently out of reach.

\subsection*{Main results}
Our main results fully settle the validity of \cite[Conjecture 2]{BeCa}, allowing in fact for densities that are not necessarily smooth, but only H\"older continuous and strictly positive on a bounded connected domain with Lipschitz boundary. We state and prove separately first the case of the quadratic Wasserstein distance between an empirical measure and the common law of the sampled points,  and then that of optimal transport between two independent samples. In the former setting we have the following

\begin{theorem}\label{thm:main-ref}
Let $\Omega \subseteq \R^2$ be a bounded connected domain, with Lipschitz boundary and  $\rho$ be a H\"older continuous probability density on $\Omega$ uniformly strictly positive and bounded from above. Given i.i.d.\ random variables $(X_i)_{i=1}^\infty$ with common distribution $\rho$, we have 
\[ \lim_{n \to \infty} \frac{n}{\log n} \mathbb{E} \sqa{ W_2^2\bra{ \frac 1 n \sum_{i=1}^n \delta_{X_i}, \rho }}  = \frac {|\Omega|} {4 \pi}.\]
\end{theorem}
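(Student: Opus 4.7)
The strategy is to couple the known sharp asymptotics on flat models with uniform density, as established in \cite{AST, AmGlau}, to the present setting via a Whitney-type decomposition that simultaneously handles the geometry of $\Omega$ and the variations of $\rho$. H\"older continuity ensures that on a cell $Q$ of diameter $r$ the density $\rho$ differs from its average by $O(r^\alpha)$, an error that remains negligible in the relevant regime. I would establish matching upper and lower bounds, both equal to $\frac{|\Omega| \log n}{4\pi n}(1+o(1))$.

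\emph{Upper bound.} I would fix a Whitney-type decomposition $\{Q_j\}_j$ of $\Omega$ with $\diam(Q_j) \sim \mathrm{dist}(Q_j, \partial \Omega)$, truncated at a mesoscale $r$ satisfying $n^{-1/2} \ll r \ll 1$. On each $Q_j$ I would construct a suboptimal coupling in two steps. First, a \emph{local} matching between the restricted empirical measure $\frac{1}{n}\sum_{X_i\in Q_j}\delta_{X_i}$ and a suitably renormalized restriction of $\rho$ to $Q_j$, for which the benchmark on cubes yields a contribution $\frac{|Q_j|\log(n|Q_j|)}{4\pi n}$ up to lower-order terms; the H\"older error in replacing $\rho$ by its mean on $Q_j$ enters only at lower order. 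Second, a \emph{global} correction that equalizes the actual and expected cell masses, realized by a Dacorogna--Moser type flow whose cost is controlled by a negative Sobolev norm of the mass discrepancies and scales as $O(1/n)$ without a logarithmic factor. Summing over $j$, with a separate treatment of the near-boundary strip of width $r$ (whose $\rho$-mass is $O(r)$ and whose diameter is $O(r)$, hence costs $O(r^3)$ in total), gives the upper bound.

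\emph{Lower bound.} I would use Kantorovich duality. A near-optimal potential $f$ approximately solves the linearized Monge--Amp\`ere equation $-\div (\rho \nabla f) = \mu_n - \rho$, and
\[ W_2^2(\mu_n,\rho) \;\ge\; \int f\,d(\mu_n - \rho) - o\!\bra{\log n / n}.\]
Localizing to each Whitney cell $Q_j$ with Neumann boundary data and subtracting the cell average, the problem reduces to bounding from below the expected Dirichlet energy of the solution of the Poisson equation on $Q_j$ with source $\mu_n\rst_{Q_j} - \bar\rho_j \mathbf{1}_{Q_j}$. Computing this expectation via the Neumann heat kernel $P_t^{Q_j}$ at time $t \sim (n\bar\rho_j)^{-1}$ produces for each cell the contribution $\frac{|Q_j|\log(n|Q_j|)}{4\pi n}(1+o(1))$, the constant coming from the short-time asymptotic $\int_0^T \tr(P_{2t}^{Q_j}) \, dt \sim \frac{|Q_j|}{4\pi}\log T$ as $T\to\infty$. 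Summation over cells recovers the claimed limit.

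\emph{Main obstacle.} The delicate point is matching the two bounds with the same sharp constant $1/(4\pi)$ while controlling boundary and cross-cell effects. The Whitney scale $r$ must be tuned so that simultaneously: (i) every cell contains enough sample points for the benchmark asymptotics to apply; (ii) the H\"older error in $\rho$ within each cell is negligible after summation; (iii) the contribution of the boundary strip of width $r$ is $o(\log n / n)$; and (iv) the global mass-correction transport remains sublogarithmic. The Neumann heat-kernel lower bound on a generic Whitney cell and the cross-cell transport estimate are the main technical ingredients beyond the flat benchmark, both ultimately relying on PDE estimates tailored to the H\"older regularity of $\rho$ and the Lipschitz regularity of $\partial\Omega$.
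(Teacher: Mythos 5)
Your upper bound follows the paper's route (Whitney cells, cube benchmark, $H^{-1}$ control of the mass-rebalancing term), but your lower bound has a genuine gap, and it is exactly the point where the paper's main new idea enters. The quantity $W_2^2(\mu_n,\rho)$ is \emph{not} super-additive over a partition: restricting a near-optimal plan (or a global dual potential) to a Whitney cell does not produce admissible marginals, and nothing in your sketch prevents cross-cell transport from beating the sum of cell-wise Neumann benchmarks. Writing ``localizing to each Whitney cell $Q_j$ with Neumann boundary data'' is an assertion, not an argument; the inequality $W_2^2(\mu_n,\rho)\ge \int f\,d(\mu_n-\rho)-o(\log n/n)$ with $f$ solving the linearized equation is itself the hard step of \cite{AST,AmGlau} (it needs the second-order dual estimate, high-probability regularity events, and on a domain with boundary a cutoff forcing the dual pair to vanish on $\partial$), and it does not decouple across cells. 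The paper resolves this by replacing $W_2$ with the boundary-modified cost $\Wb$, which \emph{is} super-additive (see \eqref{eq:mainsup}), and then proving the genuinely new Proposition~\ref{prop:ast-dirichlet}: for uniform points on the square, allowing mass to enter and exit through the boundary does not change the constant $1/(4\pi)$. That proof occupies all of Section~\ref{sec:wd} (heat-kernel regularization, cutoff $\chi_\eta$, Hopf--Lax dual potentials vanishing on $\partial(0,1)^2$, fourth-moment bounds). Your ``Neumann heat-kernel lower bound on a generic Whitney cell'' is not a substitute for this; without it (or an equivalent Dirichlet--Neumann bracketing statement) the lower bound is not established.

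There is also a quantitative tension in your upper bound concerning scales. If your cells have diameter of order one away from the boundary, then replacing $\rho$ by its cell average is \emph{not} a lower-order error: the H\"older oscillation over such a cell is $O(1)$ and changes the constant. If instead all cells have size $r=n^{-\beta}$, then the global rebalancing term is not $O(1/n)$: with $\sim r^{-2}$ cells, the sharp small-support estimate of Lemma~\ref{lem:Sob} gives $\E\sqa{W_2^2\bra{\sum_k\kappa_k\rho\chi_{\Omega_k},\rho}}\les |\log r|/n\sim \beta\log n/n$, i.e.\ comparable to the leading term, so the sharp constant is lost. The paper threads this needle with a two-parameter scheme: a boundary scale $\delta=n^{-\beta}$ (trivial cost $\delta^3$), a \emph{fixed} dyadic subdivision parameter $r$ sent to $0$ only after $n\to\infty$ (so the global term $|\log r|/n$ in \eqref{eq:global} is negligible relative to $\log n/n$, and the H\"older error is a multiplicative $(1+Cr^\alpha)$), and, to compare the H\"older density on a sub-cube with the uniform one with constant close to $1$, the heat-flow transport map of Proposition~\ref{prop:map-heat-semigroup}/Lemma~\ref{lem:heat-cube} (the Knothe map used for Lipschitz densities fails in the H\"older class). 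Your Dacorogna--Moser correction is in the same spirit as this last ingredient, but the scale bookkeeping in your sketch, as stated, does not close.
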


In the latter case, we also allow for possibly a different number of points $n$, $m$ among two (jointly) i.i.d.\ families of random variables, extending \cite[Remark 7.1]{AmGlau}. 

\begin{theorem}\label{thm:main-ref-bip}
Let $\Omega \subseteq \R^2$ be a bounded connected domain with Lipschitz boundary and  $\rho$ be a H\"older continuous probability density on $\Omega$ uniformly strictly positive and bounded from above. Given i.i.d.\ random variables $(X_i, Y_i)_{i=1}^\infty$ with common distribution $\rho$, for every $q \in [1,\infty]$, 
\[ \lim_{\substack{n,m \to \infty \\ m/n \to q}} \frac{n}{\log n} \mathbb{E} \sqa{ W_2^2\bra{ \frac 1 n \sum_{i=1}^n \delta_{X_i}, \frac 1 m \sum_{j=1}^m \delta_{Y_j} }}  = \frac {|\Omega|} {4 \pi}\bra{ 1 + \frac{1}{q}}.\]
\end{theorem}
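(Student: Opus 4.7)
The strategy is to reduce Theorem~\ref{thm:main-ref-bip} to Theorem~\ref{thm:main-ref}, exploiting the independence of the two samples to cancel a cross term. Note that a straightforward triangle-inequality upper bound $W_2(\mu_n^X,\mu_m^Y)\le W_2(\mu_n^X,\rho)+W_2(\mu_m^Y,\rho)$, combined with Cauchy--Schwarz on the expectation, yields only the looser constant $\tfrac{|\Omega|}{4\pi}\bigl(1+1/\sqrt{q}\bigr)^2$, strictly larger than the target $\tfrac{|\Omega|}{4\pi}(1+1/q)$ for every finite $q$. The extra mixed contribution must be eliminated by a finer, linearization-based argument.

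The key tool is the negative-Sobolev linearization that underlies the proof of Theorem~\ref{thm:main-ref}: at mesoscopic scales, $W_2^2(\mu,\nu)$ between measures close to $\rho$ coincides, up to lower-order errors, with the weighted norm $\|\mu-\nu\|_{H^{-1}(\rho)}^2 := \int|\nabla\phi|^2\,\rho$, where $-\div(\rho\nabla\phi)=\mu-\nu$. Polarization gives
\[
\|\mu_n^X-\mu_m^Y\|_{H^{-1}(\rho)}^2 = \|\mu_n^X-\rho\|_{H^{-1}(\rho)}^2 + \|\mu_m^Y-\rho\|_{H^{-1}(\rho)}^2 - 2\bigl\langle \mu_n^X-\rho,\,\mu_m^Y-\rho\bigr\rangle_{H^{-1}(\rho)},
\]
and the expectation of the cross term vanishes by independence, together with the centering $\mathbb{E}[\mu_n^X-\rho]=\mathbb{E}[\mu_m^Y-\rho]=0$. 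Concretely, for the upper bound I would construct a competitor transport plan from $\mu_n^X$ to $\mu_m^Y$ by composing the approximate Brenier maps $\mathrm{id}+\nabla\phi_n$ and $\mathrm{id}+\nabla\phi_m$, which push a mollification of $\rho$ onto $\mu_n^X$ and $\mu_m^Y$ respectively, with $\phi_n,\phi_m$ solving regularized Poisson problems $-\div(\rho\nabla\phi_n)=\mu_n^X-\rho$ and $-\div(\rho\nabla\phi_m)=\mu_m^Y-\rho$. The resulting cost expands as $\int|\nabla\phi_n|^2\rho-2\int\nabla\phi_n\cdot\nabla\phi_m\,\rho+\int|\nabla\phi_m|^2\rho$ plus controlled errors, and the cross contribution integrates to zero in expectation. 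For the lower bound one runs the mirror argument through Kantorovich duality, testing against potentials constructed from the same $\phi_n,\phi_m$.

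Once the splitting is in place, Theorem~\ref{thm:main-ref} delivers $\tfrac{n}{\log n}\,\mathbb{E}[W_2^2(\mu_n^X,\rho)]\to \tfrac{|\Omega|}{4\pi}$, and an elementary rescaling gives $\tfrac{n}{\log n}\,\mathbb{E}[W_2^2(\mu_m^Y,\rho)] = \tfrac{n\log m}{m\log n}\cdot\tfrac{m}{\log m}\,\mathbb{E}[W_2^2(\mu_m^Y,\rho)]\to \tfrac{1}{q}\cdot\tfrac{|\Omega|}{4\pi}$ when $q<\infty$, and to $0$ when $q=\infty$ (since $n\log m/(m\log n)\to 0$ in that regime). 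Summing the two diagonal limits produces the claim. The main technical obstacle is the uniformity of the linearization errors in $n$ and $m$ simultaneously: the regularization (mollification) scale must be calibrated to the slower empirical measure, while all error contributions involving the denser side must remain strictly subleading with respect to the dominant rate $\log n/n$. This is especially delicate in the regime $q=\infty$, where cross-type error terms of the borderline size $\|\mu_n^X-\rho\|_{H^{-1}(\rho)}\|\mu_m^Y-\rho\|_{H^{-1}(\rho)}$ must still be shown, in expectation, to be $o(\log n/n)$, even though a naive Cauchy--Schwarz estimate would only give comparability, not smallness.
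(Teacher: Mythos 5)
Your two guiding observations are sound: the naive triangle-inequality bound does give the wrong constant, and independence is indeed what ultimately kills the cross contribution. But the way you propose to exploit it hides a genuine gap. Your plan requires running the sharp $H^{-1}(\rho)$ linearization of $W_2^2$ \emph{globally} on the domain $\Omega$ with weight $\rho$, in both directions and with matching constant: the map $\mathrm{id}+\nabla\phi_n$ does not transport (a mollification of) $\rho$ exactly onto $\mu_n^X$, so your ``competitor plan'' is not admissible as stated, and correcting it -- and, worse, proving the matching lower bound by duality with potentials built from $\phi_n,\phi_m$ -- is exactly the delicate machinery of \cite{AST,AmGlau}, which is only available on the torus/square/closed manifolds and uses ingredients (uniform $\nabla^2$-bounds on the regularized potential, Hopf--Lax control, periodic extension for the boundary cut-off) that are not at hand on a general bounded Lipschitz domain with a merely H\"older continuous density. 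The entire architecture of the paper (boundary cost $\Wb$, Whitney decomposition, Lemma~\ref{lem:heat-cube}, Propositions~\ref{prop:ast-dirichlet} and \ref{prop:astbip-dirichlet}) exists precisely to avoid performing that global linearization on $\Omega$; asserting it as the ``key tool'' without a construction is the missing idea, not a technicality. Your final step is also circular in spirit: ``summing the diagonal limits'' only gives the theorem if the off-diagonal error is $o(\log n/n)$ in \emph{both} the upper and the lower bound, and you acknowledge in the last sentence that Cauchy--Schwarz cannot deliver this -- but you do not supply the mechanism that does.

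For contrast, the paper proceeds in the opposite order. It first proves the \emph{lower} bound $\EE[\Wb^2_\Omega(\mu_n,\lambda_m)]\ge\frac{\log n}{4\pi n}(1+\frac nm-\omega(n))$ by super-additivity of the boundary cost over a Whitney-type partition (Lemma~\ref{lem:decomp}), reducing local terms to the bipartite square case (Proposition~\ref{prop:astbip-dirichlet}, where independence enters only as the orthogonality $\EE\int\nabla f^t_n\cdot\nabla f^t_m=0$ on the square), and controlling the global term in $H^{-1}$ on a high-probability event. The upper bound is then obtained almost for free: coupling $\mu_n$ and $\lambda_m$ through the exact optimal maps $T^{\mu_n},T^{\lambda_m}$ from $\rho$, independence factorizes the cross term into $\int\EE[T^{\mu_n}-x]\cdot\EE[T^{\lambda_m}-x]\rho$, and the smallness of the bias $\int|\EE[T^{\mu_n}-x]|^2\rho=o(\log n/n)$ is \emph{deduced} from the matching of Theorem~\ref{thm:main-ref} with the already-proven $n=m$ lower bound, rather than established by a direct linearization. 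If you want to salvage your route, you would need to either prove the two-sided linearization on general Lipschitz domains with H\"older densities (a substantial open-ended task), or adopt a localization scheme of the paper's type, at which point you are reproducing its proof.
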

Building on these results for Euclidean planar domains, it is possible using similar sub-additivity  arguments to generalize them  to some metric measure spaces, including e.g., smooth connected domains
of two dimensional Riemannian manifolds with densities, see Section~\ref{sec:manifolds} for more precise statements.

Notice that when $q \to \infty$ we obtain in Theorem \ref{thm:main-ref-bip} the same limit as in Theorem~\ref{thm:main-ref}. When $n=m$, Theorem \ref{thm:main-ref-bip} gives the asymptotic value of the 
 minimum bipartite matching. In fact, using simple upper and lower bounds we can  obtain a related result for $n$, $m \to \infty$ with $m-n$ that does not grow too fast. 

\begin{corollary}\label{cor:assignment}
Let $\Omega \subseteq \R^2$ be a bounded connected domain, with Lipschitz boundary and  $\rho$ be a H\"older continuous probability 
density on $\Omega$ uniformly strictly positive and bounded from above.  Given i.i.d.\ random variables $(X_i, Y_i)_{i=1}^\infty$ with common distribution $\rho$, for any sequence $m= m(n) \ge n$ with $\lim_{n \to \infty} (m - n)/ \log n = 0$, it holds
\begin{equation*}\label{eq:ass-limit}  \lim_{n \to \infty } \frac{1}{\log n } \mathbb{E} \sqa{\min_{\sigma \in \mathcal{S}_{n,m}} \sum_{i=1}^n |X_i - Y_{\sigma(i)}|^2
}  = \frac {|\Omega|} {2 \pi}.\end{equation*}
\end{corollary}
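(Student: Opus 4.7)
The strategy is to reduce the assignment cost $A_{n,m} := \min_{\sigma \in \mathcal{S}_{n,m}} \sum_{i=1}^n |X_i - Y_{\sigma(i)}|^2$ to the squared Wasserstein cost $W_2^2(\mu_n, \nu_m)$ between the empirical measures $\mu_n := \frac{1}{n}\sum_{i=1}^n \delta_{X_i}$ and $\nu_m := \frac{1}{m}\sum_{j=1}^m \delta_{Y_j}$, and then invoke Theorem~\ref{thm:main-ref-bip}. Indeed, $m - n = o(\log n)$ forces $m/n \to 1$, so Theorem~\ref{thm:main-ref-bip} with $q = 1$ gives $\frac{n}{\log n}\mathbb{E}[W_2^2(\mu_n, \nu_m)] \to \frac{|\Omega|}{2\pi}$, which is exactly the target value.

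The key algebraic observation is that, by the Birkhoff--von Neumann theorem applied once the $n$ used targets are fixed, the assignment cost splits as
\[
A_{n,m} = n \min_{J \subseteq \{1,\dots,m\},\; |J| = n} W_2^2(\mu_n, \nu'_J), \qquad \nu'_J := \tfrac{1}{n}\sum_{j \in J} \delta_{Y_j}.
\]
Moreover, for every such $J$, redistributing evenly the mass $1/m$ carried by each of the $k := m - n$ discarded points $Y_j$ among the $n$ remaining ones provides an explicit coupling giving the deterministic bound $W_2^2(\nu_m, \nu'_J) \leq (k/m)\,\diam(\Omega)^2 \leq C\, k/n$.

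For the upper bound I fix $J = \{1,\dots,n\}$, apply the triangle inequality for $W_2$, square and take expectation, estimating the cross term via Jensen's inequality as $\mathbb{E}[W_2(\mu_n,\nu_m)] \leq \bigl(\mathbb{E}[W_2^2(\mu_n,\nu_m)]\bigr)^{1/2} = O(\sqrt{\log n / n})$; the resulting error is $O(\sqrt{k \log n}) + O(k) = o(\log n)$, precisely because $k = o(\log n)$. For the lower bound, writing $A_{n,m} = n\, W_2^2(\mu_n, \nu'_{J^*})$ for the (random) optimal subset $J^*$ and applying the triangle inequality $W_2(\mu_n,\nu_m) \leq W_2(\mu_n,\nu'_{J^*}) + W_2(\nu_m,\nu'_{J^*})$ combined with Young's inequality yields
\[
W_2^2(\mu_n,\nu'_{J^*}) \geq \tfrac{1}{1+\varepsilon_n}\, W_2^2(\mu_n,\nu_m) - \tfrac{1+1/\varepsilon_n}{1+\varepsilon_n}\, W_2^2(\nu_m,\nu'_{J^*})
\]
for any $\varepsilon_n > 0$. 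The calibration $\varepsilon_n := \sqrt{k/\log n}$ ensures both $\varepsilon_n \to 0$ and $\varepsilon_n^{-1}\, k/n = o(\log n / n)$, so combining the two bounds with Theorem~\ref{thm:main-ref-bip} and dividing by $\log n$ yields the stated limit. The only (mild) obstacle is precisely this calibration: the hypothesis $m - n = o(\log n)$ is the sharp threshold under which $\varepsilon_n$ can tend to zero while still absorbing the triangle-inequality remainder into the $o(\log n)$ error, and therefore under which $A_{n,m}$ and $n W_2^2(\mu_n,\nu_m)$ share the same leading asymptotics.
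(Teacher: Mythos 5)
Your argument is correct, but it takes a genuinely different route from the paper's, most visibly in the lower bound. The paper never leaves the discrete matching formulation: for the upper bound it uses the inclusion $\mathcal{S}_n \subseteq \mathcal{S}_{n,m}$, and for the lower bound it completes an optimal partial matching $\sigma \in \mathcal{S}_{n,m}$ to a full matching of $X_1,\dots,X_m$ with $Y_1,\dots,Y_m$ — using the extra samples $X_{n+1},\dots,X_m$, available since the sequence is infinite — at an additive cost of at most $\diam(\Omega)^2 (m-n) = o(\log n)$, and then compares both sides with the $n=m$ case of Theorem~\ref{thm:main-ref-bip} applied at levels $n$ and $m$. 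You instead stay at the level of empirical measures: the Birkhoff--von Neumann identity $A_{n,m} = n \min_{|J|=n} W_2^2(\mu_n,\nu'_J)$, the deterministic redistribution coupling $W_2^2(\nu_m,\nu'_J) \lesssim (m-n)/n$ (which holds uniformly in $J$, hence also for the random optimizer $J^*$), and a Young-inequality calibration $\varepsilon_n = \sqrt{(m-n)/\log n}$ turn the defect into an error $O(\sqrt{(m-n)\log n} + (m-n)) = o(\log n)$. What each buys: your version needs only the samples $X_1,\dots,X_n$ (no extension of the $X$-sample) and quantifies the discrepancy as a $W_2$ perturbation, while the paper's version is more elementary — a purely combinatorial completion with no calibration — and invokes only the $n=m$ case of Theorem~\ref{thm:main-ref-bip} rather than the statement with $m \neq n$, $m/n \to 1$. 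Two harmless details: when $m=n$ your choice $\varepsilon_n = 0$ is not admissible in Young's inequality, but then the correction term vanishes and the identity $A_{n,n} = n W_2^2(\mu_n,\nu_n)$ is exact, so handle this case separately (or take $\varepsilon_n = \max\{\sqrt{(m-n)/\log n},\, 1/\log n\}$); and your closing claim that $m-n = o(\log n)$ is the sharp threshold for this calibration is heuristic commentary, not needed for (and not established by) the argument.
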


We further deduce a result for the average cost of the random quadratic Euclidean bipartite travelling salesperson problem. The Euclidean travelling salesperson problem searches for the shortest cycle that visits a given set of points in $\R^d$, and the study of its random version is  classical \cite{beardwood1959shortest, redmond1996asymptotics, yukich2000asymptotics, yukich1995asymptotics}. Its random bipartite variant requires that the cycle must alternatively connect points from two given sets of points. By extending the argument from \cite{capelli2018exact} from the case of the square to a general domain, we deduce that the cost of the random quadratic Euclidean bipartite travelling salesperson problem in two dimensions is asymptotically twice that of the bipartite matching problem.

\begin{corollary}\label{cor:main-TSP}
Let $\Omega \subseteq \R^2$ be a bounded connected domain, with Lipschitz boundary and $\rho$ be a H\"older continuous probability density on $\Omega$ uniformly strictly positive and bounded from above. Given i.i.d.\ random variables $(X_i, Y_i)_{i=1}^\infty$ with common distribution $\rho$, then
\[ \lim_{n \to \infty} \frac{1}{\log n} \mathbb{E} \sqa{  \min_{\sigma,\tau \in \mathcal{S}_n} \sum_{i=1}^{n} |X_{\sigma(i)} - Y_{\tau(i)}|^2 + |Y_{\tau(i)} - X_{\sigma(i+1)}|^2 } = \frac {|\Omega|} {\pi},\]
where we use the convention  $\sigma(n+1) = \sigma(1)$, for $\sigma \in \mathcal{S}_n$.
\end{corollary}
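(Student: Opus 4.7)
The plan is to derive the corollary directly from Theorem~\ref{thm:main-ref-bip} (in the case $q=1$), via the heuristic that a bipartite Hamiltonian cycle splits into two perfect bipartite matchings between $\{X_i\}_{i=1}^n$ and $\{Y_j\}_{j=1}^n$. Throughout, let $M_n := \min_{\pi\in\mathcal{S}_n} \sum_i |X_i - Y_{\pi(i)}|^2$ denote the optimal bipartite matching cost; by the Birkhoff--von Neumann correspondence together with Theorem~\ref{thm:main-ref-bip}, one has $\mathbb{E}[M_n]/\log n \to |\Omega|/(2\pi)$ as $n\to\infty$.

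\emph{Lower bound.} For any $(\sigma,\tau)\in\mathcal{S}_n\times\mathcal{S}_n$, the ``odd'' edges $\{X_{\sigma(i)} Y_{\tau(i)}\}_{i=1}^n$ and the ``even'' edges $\{Y_{\tau(i)} X_{\sigma(i+1)}\}_{i=1}^n$ of the corresponding Hamiltonian cycle each form a perfect bipartite matching between the $X$'s and the $Y$'s. Hence each of the two sums in the TSP cost is pointwise at least $M_n$, which gives $\mathrm{TSP}_n \geq 2 M_n$ almost surely and therefore $\liminf_n \mathbb{E}[\mathrm{TSP}_n]/\log n \geq |\Omega|/\pi$.

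\emph{Upper bound.} Here I would exhibit an explicit admissible cycle built from the optimal matching. Let $\sigma^* \in \mathcal{S}_n$ realize $M_n$, and let $\mu \in \mathcal{S}_n$ order $\{X_1,\ldots,X_n\}$ along a monopartite tour with quadratic cost $T_n := \sum_k |X_{\mu(k+1)} - X_{\mu(k)}|^2$. Plugging the choice $\sigma := \mu$, $\tau := \sigma^* \circ \mu$ into the TSP functional, the ``$X\to Y$'' terms contribute exactly $M_n$, while for the ``$Y\to X$'' edges, Young's inequality $(a+b)^2 \leq (1+\varepsilon) a^2 + (1+\varepsilon^{-1}) b^2$ applied to the triangle inequality
\[
|Y_{\sigma^*(\mu(k))} - X_{\mu(k+1)}| \leq |Y_{\sigma^*(\mu(k))} - X_{\mu(k)}| + |X_{\mu(k)} - X_{\mu(k+1)}|
\]
gives, after summation over $k$, a bound of $(1+\varepsilon) M_n + (1+\varepsilon^{-1}) T_n$. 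Hence $\mathrm{TSP}_n \leq (2+\varepsilon) M_n + (1+\varepsilon^{-1}) T_n$ for every $\varepsilon > 0$.

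The only remaining ingredient is the negligibility of $T_n$; here I expect no serious obstacle. Embedding $\Omega$ in a bounding square and sorting the $X_i$'s along any preimage on a Hilbert space-filling curve of this square, the $1/2$-H\"older regularity of the parametrization yields $|X_{\mu(k+1)}-X_{\mu(k)}|^2 \leq C\,(s_{k+1}-s_k)$ for the corresponding parameter values, so the sum telescopes and $T_n \leq C(\Omega) + \diam(\Omega)^2$ \emph{deterministically}. Taking expectations, dividing by $\log n$, letting $n \to \infty$ and finally $\varepsilon \downarrow 0$ then gives $\limsup_n \mathbb{E}[\mathrm{TSP}_n]/\log n \leq |\Omega|/\pi$, which together with the lower bound proves the corollary. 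The genuinely delicate content is already packaged in Theorem~\ref{thm:main-ref-bip}; the rest is an elementary geometric construction.
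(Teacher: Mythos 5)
Your proposal is correct and follows essentially the same route as the paper: the lower bound by splitting the bipartite cycle into two perfect matchings, and the upper bound by composing the optimal matching with a monopartite tour of the $X$'s, using the triangle plus Young inequality and the deterministic $O(1)$ space-filling-curve bound on the quadratic monopartite TSP cost, then invoking Theorem~\ref{thm:main-ref-bip} and letting $\eps \to 0$.
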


Finally, let us comment that our main focus on limit results for the expected costs is justified by general concentration arguments that can improve to convergence in probability of the sequence of  renormalized cost -- see \cite[Remark 4.7]{AST}, where the only assumption is the validity of a $L^2$-Poincar\'e-Wirtinger inequality, satisfied on such regular domains.



\subsection*{Idea of the proof} We follow the sub-additivity method introduced in this context in \cite{BaBo,DeScSc13} and later improved in \cite{BeCa, GolTre}. This splits the energy in a local and a
global part. A central point is to prove that the global part, which encodes the defect in sub-additivity, has asymptotically only a vanishing contribution.   While this method has already been  implemented  in \cite{BeCa} to obtain  the upper bound \eqref{eq:upperbound} one of our main achievements is to prove that it may indeed be used also to get the corresponding lower bound. To this aim, we rely on a ``boundary'' variant of the optimal transport which has super-additivity properties. Similar functionals have been widely used in the theory of Euclidean additive functionals \cite{BaBo, Yu98}. The main point is to prove that when $\Omega=(0,1)^2$ and the law is the uniform one,  the asymptotic costs for the usual optimal transport and the ``boundary'' versions are equal (see Proposition \ref{prop:ast-dirichlet} and Proposition \ref{prop:astbip-dirichlet}). Our proof relies on the PDE approach from \cite{AmGlau,AST}. To the best of our knowledge, this is the first example where the so-called Dirichlet-Neumann bracketing method is used in the context of optimal transport (see \cite{ArmSer} for a recent application of these ideas for Coulomb gases).

On a more technical side, we introduce two main ideas. First, in order to deal with domains which are not $(0,1)^2$ or a finite union of cube, we consider a Whitney partition of our domain. While this  does not affect too much the treatment of the local term, it requires a finer estimate of the global one. Indeed,  as in \cite{BeCa,GolTre} (see also \cite{peyre2018comparison,AST,GHO,Le17}) we first  rely on the estimate
\[
 W_2^2(f,1)\les \|f-1\|_{H^{-1}}^2
\]
to reduce ourselves to an estimate in $H^{-1}$. In \cite{BeCa,GolTre} (and also \cite{GHO}) the right-hand side is then estimated thanks to Poincar\'e inequality by an $L^2$ norm. A quick look at \cite{BeCa} shows that for fixed number of points $n$, this yields an error which is proportional to the number of cubes in the partition of our domain. Since a Whitney partition  is made of  infinitely many such cubes (or in any case a very large number of them, see Lemma \ref{lem:decomp}) this would lead to a very bad estimate. This is due to the fact that a function with rapid oscillations typically has large $L^2$ norm but small $H^{-1}$ norm. To capture this, we prove in Lemma \ref{lem:Sob}  a finer estimate for functions with ``small'' support. This gives an error term which, up to logarithm, does not depend on the mesh-size of our partition (see \eqref{eq:global}). 
In order to deal with H\"older continuous densities instead of Lipschitz ones as in \cite{BeCa}, the second idea is to replace the Knothe map used in \cite[Lemma 1]{BeCa} by a transport map constructed via heat flow interpolation in the spirit of the Dacorogna-Moser construction (see Lemma~\ref{lem:heat-cube}). This might be of independence interest.

\subsection*{Further questions} We mention here some open problems related to our results:
\begin{enumerate}
\item In \cite{ambrosio2019optimal} and \cite{goldman2021fluctuation}  quantitative rates of convergence for the optimal transport map have been obtained. It may be worth exploring whether similar quantitative rates still hold in the general setting addressed in this article.
\item  We expect that our results may be extended to the case of  less regular domains and densities, with a similar asymptotic cost. However, the validity of \eqref{eq:upperbound} must require some condition on the support of $\rho$, such as connectedness \cite[Remark 2]{BeCa}, which is in contrast with the universal upper bounds obtained by Talagrand \cite{Ta92} for the case $p=1$.
\item The case of unbounded domains may be treated with similar techniques, possibly leading to sharper limit results for a wide class densities, e.g.\ Gaussian ones \cite{Le17, Le18, talagrand2018scaling}.
\end{enumerate}

\subsection*{Structure of the paper} In Section \ref{sec:intro} we fix notation and provide some general results about negative Sobolev norms, classical optimal transport and its ``boundary'' version and finally a construction of a transport map from a H\"older density to  the uniform one via heat flow on the cube. Section~\ref{sec:wd} is devoted to the extension of the PDE approach  from \cite{AmGlau,AST} to the ``boundary'' transport cost on the square. In Section~\ref{sec:main-ref} and Section~\ref{sec:main-bip} we prove respectively Theorem~\ref{thm:main-ref} and Theorem~\ref{thm:main-ref-bip}. Section~\ref{sec:manifolds} describes how similar techniques allow to consider more general settings, including Riemannian manifolds. Section~\ref{sec:ass} and Section ~\ref{sec:TSP} contain respectively the proof of Corollary~\ref{cor:assignment} and Corollary~\ref{cor:main-TSP}. Two appendices include technical results about decomposition of a domain into sets with certain properties that may be well-known in the literature, but we did not find exactly in a version useful for our purposes.

\section{Notation and preliminary results}\label{sec:intro}

We write $|A|$ for the Lebesgue measure of a Borel set $A \subseteq \R^d$, and $\int_A f$ for the integral of an integrable function $f$ on $A$. 
If a measure is absolutely continuous with respect to the Lebesgue measure, we always identify it with its density. A cube $Q \subseteq \R^d$ of side length $\ell$ is a set of the 
form $Q = \prod_{i=1}^d (v_i, v_i+\ell)$ with $(v_i)_{i=1}^d \in \R^d$ and $\ell>0$. We note $\ell(Q)$ the sidelength of a cube $Q$. By a partition of $\Omega\subseteq \R^d$,
we always mean in fact that $\Omega$ is covered up to Lebesgue negligible sets. We denote by $|v|$ the  Euclidean norm of a vector $v\in \R^d$.  We use the letters $C$, $c$ for   
positive constants whose value may vary from one line to the next and $\omega$ for a generic decreasing rate function with $\lim_{t\to \infty} \omega(t)=0$.
In this paper the domain $\Omega$ is fixed  and we write $A\les B$ if there is  $C>0$ depending only on $\Omega$ (and potentially on $d$ and $p$ if we consider the $p-$Wasserstein distance in $\R^d$) such that $A\le C B$. For a function $f$, we use the notation $\nabla f$ for its gradient, $\nabla\cdot f$ for its divergence, $\Delta f = \nabla \cdot \nabla f$ for its Laplacian. The push-forward of a measure $\mu$ with respect to a map $f$ is denoted $f_\sharp \mu$, i.e., $f_\sharp \mu( A) = \mu(f^{-1}(A))$. 

For $f: \Omega \subseteq \R^d \to \R^k$, we write $\nor{f}_{L^p(\Omega)} = \bra{ \int_{\Omega} |f|^p}^{1/p}$, $\nor{f}_{L^\infty(\Omega)} = \sup_{x \in  \Omega} |f(x)|$ and
\[ \nor{f}_{C^\alpha(\Omega)} = \nor{f}_{L^\infty(\Omega)} + \sup_{ \substack{x,y\in \Omega \\ x\neq y }} \frac{ |f(y)-f(x)|}{|x-y|^\alpha},\]
for $\alpha \in (0,1]$ for the $\alpha$-H\"older norm of $f$ (when $\alpha=1$ we obtain its Lipschitz norm). We also write
\[ \Lip_{\Omega} f = \sup_{ \substack{x,y\in \Omega \\ x\neq y }} \frac{ |f(y)-f(x)|}{|x-y|}.  \]
We omit to specify $\Omega$ when it is clear from the context and write $\nor{f}_{p}$, $\nor{f}_\infty$, $\nor{f}_{C^\alpha}$ and $\Lip f$.

We collect below some general facts useful to prove our main results. For possible future reference we state and prove them in a slightly more general form, e.g., in general dimensions or for general, non quadratic, costs.

\subsection{Negative Sobolev norms}

Let $\Omega \subseteq \R^d$ be a bounded connected open set with Lipschitz boundary. We denote the negative Sobolev norm by 
\[
 \|f\|_{W^{-1,p}(\Omega)}=\sup_{|\nabla \phi|_{L^{q}(\Omega)}\le 1} \int_{\Omega} f \phi,
\]
where $q$ is the H\"older conjugate of $p$ (i.e. $1=\frac{1}{p}+\frac{1}{q}$). For $p=2$ we simply write $H^{-1}(\Omega)$. Notice in particular that in order to have $\|f\|_{W^{-1,p}(\Omega)}<\infty$ we must have $\int_\Omega f=0$. In this case we may also restrict the supremum to functions $\phi$ having also average zero. When it is clear from the context, we will drop the explicit dependence on  $\Omega$ in the norms.

The heat semi-group with null Neumann boundary conditions on $\Omega$ is well-defined as the symmetric Markov semi-group of operators $(P_t)_{t \ge 0}$ 
arising as the $L^2$ gradient flow of the Dirichlet energy $\nor{\nabla f}^2_2$ on the Sobolev space $f \in H^1(\Omega)$. It is well-known \cite{varopoulos2008analysis} that the validity of $L^2$-Poincar\'e-Wirtinger inequality on $\Omega$ is equivalent
to a spectral gap, i.e., for some constant $c>0$, for every $f \in L^2(\Omega)$ with $\int_{\Omega} f = 0$,
\begin{equation}\label{eq:exp-contractivity} \nor{ P_t f }_{2} \le e^{-c t} \nor{f}_{2}, \quad \text{for $t \ge 0$.}\end{equation}
In particular, one has the integral representation
\[ \Delta^{-1} f = \int_0^\infty  P_t f d t\]
for the solution to the elliptic PDE, $\Delta u= f$ in $\Omega$ with null Neumann boundary conditions and also, for the negative Sobolev norm, 
\begin{equation}\label{eq:negative-sobolev} \nor{f}_{H^{-1}}^2 =  \int_{\Omega}  f \Delta^{-1} f  = \int_0^\infty  \int_\Omega f P_t f dt = \int_0^\infty  \int_\Omega (P_{t/2} f)^2 dt.\end{equation}
Sobolev inequality instead is equivalent to ultracontractivity: for every $t \in [0,1]$ and $f \in L^1$ with $\int_{\Omega}f = 0$,
\begin{equation}\label{eq:UC} \nor{ P_t f }_{\infty} \les t^{-d/2} \nor{f}_{1}.\end{equation}  In 
terms of the symmetric heat kernel $p_t(x,y) = p_t(y,x)$ (defined by  the equality $P_tf(x)=\int_\Omega p_t(x,y)f(y) dy$), it reads $\nor{p_t}_\infty \les t^{-d/2}$. This inequality can be easily combined with \eqref{eq:exp-contractivity} to obtain the stronger inequality 
\begin{equation}\label{eq:stronger-ultra} \nor{ P_t f}_{\infty} \les t^{-d/2} e^{-c t} \nor{f}_{1}.\end{equation}

The following lemma will be used to bound the negative Sobolev norm of functions supported on subsets $A \subseteq \Omega$. 

\begin{lemma}\label{lem:Sob}
Let $d\ge 2$, $\Omega \subseteq \R^d$ be a bounded connected Lipschitz domain. For every $f \in L^\infty(\Omega)$ with $\int_\Omega f=0$,
\[ \lt\| f \rt\|_{H^{-1}} \lesssim \begin{cases} \nor{f}_{L^1} \sqrt{ \abs{ \log \frac{\nor{f}_{1}}{\nor{f}_\infty}} + 1}& \text{if $d=2$,}\\
 \nor{f}_{1}^{\frac{1}{2}+\frac{1}{d}}\nor{f}_{\infty}^{\frac{1}{2}-\frac{1}{d}} & \text{if $d>2$.}\end{cases}\]
 \end{lemma}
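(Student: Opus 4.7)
The plan is to exploit the spectral representation \eqref{eq:negative-sobolev}, which reduces the estimate to bounding $\int_0^\infty \|P_{t/2}f\|_2^2\, dt$. Interpolating via $\|P_{t/2}f\|_2^2 \le \|P_{t/2}f\|_1\|P_{t/2}f\|_\infty$ yields two complementary estimates on the integrand: a crude bound $\|f\|_1\|f\|_\infty$ coming from $L^1$- and $L^\infty$-contractivity of the Markov semigroup $(P_t)$, and a stronger bound $\lesssim t^{-d/2}e^{-ct}\|f\|_1^2$ obtained by combining $L^1$-contractivity with the ultracontractive estimate \eqref{eq:stronger-ultra}. The first is sharp for small $t$, the second for large $t$, and the strategy is to use each of them in the regime where it wins.

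Next I would identify the cross-over time $t_0 := (\|f\|_1/\|f\|_\infty)^{2/d}$, at which the two bounds coincide, and split the integral at $t_0$. On $[0,t_0]$ the crude bound integrates to $t_0\|f\|_1\|f\|_\infty = \|f\|_1^{1+2/d}\|f\|_\infty^{1-2/d}$. For $d>2$ the tail $\int_{t_0}^\infty t^{-d/2}e^{-ct}\|f\|_1^2\,dt$ is governed by its polynomial part and equals, up to constants, $t_0^{1-d/2}\|f\|_1^2 = \|f\|_1^{1+2/d}\|f\|_\infty^{1-2/d}$, the very same quantity; taking square roots delivers the stated bound. For $d=2$, the short-time contribution simplifies to $\|f\|_1^2$, while the tail becomes $\|f\|_1^2\int_{t_0}^\infty t^{-1}e^{-ct}\,dt$, whose polynomial piece on $[t_0,1]$ contributes a logarithm $\log(1/t_0)=\log(\|f\|_\infty/\|f\|_1)$ and whose exponential tail on $[1,\infty)$ is $O(1)$. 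Summing yields $\|f\|_{H^{-1}}^2 \lesssim \|f\|_1^2\bra{|\log(\|f\|_1/\|f\|_\infty)|+1}$.

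I do not foresee any serious obstacle: once the correct cross-over time is identified, everything reduces to an elementary one-dimensional integral. The only subtle case is the degenerate regime $t_0 \ge 1$, which can only occur when $\|f\|_\infty \lesssim \|f\|_1$ (i.e.\ $|\Omega|$ is of order one or larger relative to $\|f\|_\infty/\|f\|_1$); there the logarithm is a priori non-positive, but the absolute value in the statement absorbs the sign and the same bookkeeping goes through. I would emphasize that the use of the \emph{exponential} refinement \eqref{eq:stronger-ultra} instead of the bare ultracontractive estimate \eqref{eq:UC} is essential: it is precisely what guarantees integrability of the tail and ultimately produces a bound depending only on $\|f\|_1$ and $\|f\|_\infty$, without any residual dependence on a fixed time horizon.
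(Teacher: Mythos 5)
Your proposal is correct and follows essentially the same route as the paper: both reduce to the semigroup representation \eqref{eq:negative-sobolev}, bound the integrand by $\nor{f}_1\min\bra{\nor{f}_\infty, t^{-d/2}e^{-ct}\nor{f}_1}$ via contractivity and \eqref{eq:stronger-ultra}, and then split/optimize the time integral at the crossover scale $t_0=(\nor{f}_1/\nor{f}_\infty)^{2/d}$. Your handling of the degenerate regime $t_0\gtrsim 1$ and of the exponential tail matches the paper's bookkeeping, so there is nothing to add.
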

 \begin{proof}
 By \eqref{eq:negative-sobolev} 
 \[ \nor{ f}_{H^{-1}}^2 = \int _0^\infty \int_{\Omega}  f P_t   f dt.\] 
By \eqref{eq:stronger-ultra} and $\|P_t f\|_\infty\le \|f\|_\infty$, for every $t>0$,
 \[ \int_\Omega f P_t  f  \le \nor{ f}_{1} \nor{ P_t f}_{\infty} \les \nor {f}_1 \min(\nor{f}_\infty, t^{-\frac{d}{2}} \exp(-ct) \nor {f}_1).\]
 By integration  we obtain for every $t_0>0$,
\[  \nor{ f}_{H^{-1}}^2 \lesssim  \nor{f}_{1}\lt( t_0\nor{f}_\infty +  \nor{f}_{1} \begin{cases} |\log t_0| & \text{ if $d=2$,}\\
t_0^{1-d/2} & \text{if $d>2$}\end{cases}\rt).\]
Optimizing in $t_0$ by setting $t_0=\nor{f}_1^{d/2} \nor{f}_\infty^{-d/2}$ concludes the proof.
 \end{proof}

\subsection{Optimal transport}

We introduce some notation for the Wasserstein distance and recall few simple properties that will be used throughout.
Proofs can be found in any of the monographs \cite{Viltop, Santam}. 

Let $p \ge 1$ and $\mu$, $\lambda$  be positive Borel measures with finite $p$-th moments and equal mass $\mu(\R^d) = \lambda(\R^d)$. The Wasserstein distance of order $p\ge 1$ between $\mu$ and $\lambda$ is defined as
\[ W_{p}(\mu, \lambda) = \left(\min_{\pi\in\mathcal{C}(\mu,\lambda)} \int_{\R^d\times\R^d} |x-y|^p d \pi(x,y)\right)^\frac{1}{p},\]
where $\mathcal{C}(\mu, \lambda)$ is the set of couplings between $\mu$ and $\lambda$. For a Borel subset $\Omega \subseteq \R^d$, we also write
\[  W_{\Omega, p}(\mu, \lambda) = W_p (\mu \restr \Omega, \lambda \restr \Omega),\]
which implicitly assumes that  $\mu(\Omega) = \nu(\Omega)$.

Let us recall that $W_p$ is a distance, in particular the triangle inequality holds:
\[
W_{p}( \mu, \nu) \le W_{p}(\mu, \lambda) + W_{p}(\nu, \lambda).
\]
We will also  use the classical sub-additivity inequality 
\begin{equation}\label{eq:sub} W_{p}^p\bra{ \sum_{k} \mu_k, \sum_{k} \lambda_k} \le \sum_k W^p_{p}(\mu_k, \lambda_k),\end{equation}
for a finite set of positive measures $\mu_k$, $\lambda_k$. A simple combination of the two properties above and Young inequality yield a geometric subadditivity property 
\cite[Lemma 3.1]{GolTre}, that we report here for the reader's convenience: there exists a constant $C = C(p)>0$ such that, for every Borel partition
$(\Omega_k)_{k\in \N}$ of $\Omega$, and every $\eps\in(0,1)$, 
 \begin{equation}\label{eq:mainsub}
  W^p_{\Omega,p}\lt(\mu, \frac{\mu(\Omega)}{\lambda(\Omega)}\lambda\rt)\le (1+\eps)\sum_k W^p_{\Omega_k,p}\lt(\mu,\frac{\mu(\Omega_k)}{\lambda(\Omega_k)}\lambda\rt)+ 
  \frac{C}{\eps^{p-1}}W^p_{\Omega,p}\lt(\sum_k \frac{\mu(\Omega_k)}{\lambda(\Omega_k)}\chi_{\Omega_k}\lambda,\frac{\mu(\Omega)}{\lambda(\Omega)}\lambda\rt).
 \end{equation}

 If we assume that $\Omega$ is sufficiently regular, then we may use the Benamou-Brenier formula in a similar fashion as in \cite[Lemma 3.4]{GolTre} (see also \cite[Corollary 3]{peyre2018comparison}) to bound from above the Wasserstein distance by the negative Sobolev norm. Notice that we use the fact that the Euclidean distance is bounded from above by the geodesic distance in $\Omega$ (they coincide if $\Omega$ is convex). 
 
 \begin{lemma}\label{lem:peyre} Assume that $\Omega \subseteq \R^d$ is a bounded connected open set with  Lipschitz boundary. If $\mu$ and $\lambda$ are measures on $\Omega$ with $\mu(\Omega) = \lambda(\Omega)$,  absolutely continuous with respect to the  Lebesgue measure and $\inf_\Omega \lambda>0$, then, for every $p\ge 1$,
 \begin{equation}\label{eq:estimCZ}
  W_{\Omega, p}^p(\mu,\lambda)\les \frac{1}{\inf_{\Omega} \lambda^{p-1}}\nor{ \mu - \lambda}_{W^{-1,p}(\Omega)}^p.
 \end{equation}
 \end{lemma}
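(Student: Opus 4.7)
The plan is to apply the Benamou-Brenier dynamical formulation of $W_p$ with a carefully chosen interpolation curve, tailored to the fact that $\mu$ may vanish while $\lambda$ is uniformly bounded below. Because $\Omega$ has Lipschitz boundary, its intrinsic geodesic distance is bi-Lipschitz equivalent to the Euclidean one on $\Omega$, so the Benamou-Brenier formula still provides a valid upper bound for $W_{\Omega,p}^p(\mu,\lambda)$. Introduce the potential $u$ solving the Neumann problem $\Delta u = \lambda - \mu$ in $\Omega$, $\partial_\nu u = 0$ on $\partial\Omega$, which is well-posed up to an additive constant since $\int_\Omega \mu = \int_\Omega \lambda$. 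For $p=2$, the identity $\|\nabla u\|_{L^2}^2 = \int_\Omega (\mu-\lambda)u$ combined with the definition of $H^{-1}$ yields $\|\nabla u\|_{L^2} = \|\mu-\lambda\|_{H^{-1}}$; for general $p>1$ the analogous estimate
\[
\|\nabla u\|_{L^p(\Omega)} \lesssim \|\mu - \lambda\|_{W^{-1,p}(\Omega)}
\]
follows from Calder\'on-Zygmund regularity for the Neumann Laplacian.

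The naive linear interpolation $\rho_t = (1-t)\mu + t\lambda$ with $v_t = -\nabla u/\rho_t$ solves the continuity equation with the correct endpoints, but the kinetic energy $\int_0^1 \int_\Omega |\nabla u|^p/\rho_t^{p-1}$ diverges for $p>1$, since the only available lower bound is $\rho_t \ge t \inf_\Omega \lambda$. The key idea is a nonlinear time reparametrization: pick $s:[0,1]\to[0,1]$ smooth and increasing with $s(0)=0$, $s(1)=1$, and set
\[
\rho_t = (1-s(t))\mu + s(t)\lambda, \qquad v_t = -\frac{s'(t)\nabla u}{\rho_t}.
\]
The continuity equation $\partial_t \rho_t + \nabla\cdot(\rho_t v_t) = 0$ holds with $\rho_0 = \mu$, $\rho_1 = \lambda$, and the no-flux condition $\rho_t v_t \cdot \nu = 0$ on $\partial\Omega$ is ensured by $\partial_\nu u = 0$. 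Using $\mu\ge 0$ to improve the lower bound to $\rho_t \ge s(t) \inf_\Omega \lambda$, Benamou-Brenier gives
\[
W_{\Omega,p}^p(\mu,\lambda) \le \int_0^1 \int_\Omega \frac{(s'(t))^p |\nabla u|^p}{\rho_t^{p-1}}\,dx\,dt \le \frac{\|\nabla u\|_{L^p}^p}{(\inf_\Omega \lambda)^{p-1}} \int_0^1 \frac{(s'(t))^p}{s(t)^{p-1}}\,dt.
\]
With the choice $s(t)=t^p$ the last integral equals the finite constant $p^p$, and combining with the elliptic estimate on $\|\nabla u\|_{L^p}$ concludes.

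The main obstacle is precisely the integrability blow-up at $t=0$, which the nonlinear reparametrization overcomes by pushing mass quickly through the region near $\mu$ and then moving slowly when $\rho_t$ is close to $\lambda$. A secondary technical point is the $L^p$ elliptic regularity for the Neumann Laplacian on Lipschitz domains when $p\neq 2$; while delicate in full generality, in the regime of interest ($p=2$, used throughout the paper) it reduces to the one-line integration by parts indicated above.
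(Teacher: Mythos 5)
Your dynamical construction is essentially the same mechanism as in the references the paper cites for this lemma (\cite[Lemma 3.4]{GolTre}, \cite{peyre2018comparison}): the reparametrization $s(t)=t^p$ is just the action form of the length/metric-derivative version of Benamou--Brenier, where $\rho_t\ge t\inf_\Omega\lambda$ produces the convergent integral $\int_0^1 t^{-(p-1)/p}\,dt=p$. The continuity equation, the no-flux condition, the lower bound $\rho_t\ge s(t)\inf_\Omega\lambda$ and the final bookkeeping are all correct, and for the boundary issue only the trivial inequality (Euclidean distance $\le$ geodesic distance in $\Omega$) is needed, not bi-Lipschitz equivalence.

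The genuine gap is the step $\nor{\nabla u}_{L^p(\Omega)}\les\nor{\mu-\lambda}_{W^{-1,p}(\Omega)}$, which you attribute to Calder\'on--Zygmund regularity for the Neumann Laplacian. The lemma is asserted for \emph{every} $p\ge1$ on a domain that is only Lipschitz, and in that generality such regularity is not available: $W^{1,p}$ solvability of the Neumann problem on Lipschitz domains holds only for $p$ in a restricted range around $2$, and it fails for $p=1$ even on smooth domains; deferring to ``the regime of interest $p=2$'' does not prove the statement as written. The standard repair avoids elliptic regularity altogether: since $\nor{\mu-\lambda}_{W^{-1,p}}$ is by definition the norm of the functional $\nabla\phi\mapsto\int_\Omega(\mu-\lambda)\phi$ on the subspace $\{\nabla\phi:\phi\in W^{1,q}(\Omega)\}\subseteq L^q(\Omega;\R^d)$, Hahn--Banach extension and Riesz representation yield a vector field $E\in L^p(\Omega;\R^d)$ with $\int_\Omega E\cdot\nabla\phi=\int_\Omega(\mu-\lambda)\phi$ for all $\phi\in W^{1,q}(\Omega)$ (i.e.\ $-\nabla\cdot E=\mu-\lambda$ in $\Omega$ together with the weak no-flux condition $E\cdot\nu=0$) and $\nor{E}_{L^p}\le\nor{\mu-\lambda}_{W^{-1,p}}$. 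Using $E$ in place of $\nabla u$, that is $v_t=-s'(t)E/\rho_t$, leaves every other step of your argument unchanged and gives \eqref{eq:estimCZ} with constant of order $p^p$ for all $p\ge1$; your integration-by-parts identity then covers the case $p=2$ as a special instance rather than as the only rigorous one.
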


To deal with lower bounds, we rely on a ``boundary'' variant of the optimal transport, introduced in \cite{FigGig}, but also independently studied in the setting of Euclidean bipartite matching problems, see \cite{BaBo, DeScSc13}. Here, one is allowed to transport any amount of mass from and to the boundary of an open set $\Omega \subseteq \R^d$. We write, for finite positive measures $\mu$ and $\nu$ with finite $p$-th moment,
\[
 \Wb_{\Omega, p}^p(\mu,\lambda)=\bra{ \min_{ \pi\in \mathcal{C}b_{\Omega}(\mu, \lambda)} \int_{\overline{\Omega}\times\overline{\Omega}} |x-y|^p d \pi(x,y)}^{\frac 1 p},
\]
where $\mathcal{C}b_{ \Omega}(\mu, \lambda)$ is the set of positive measures $\pi$ on $\overline{\Omega}\times\overline{\Omega}$ such that $\pi_1\restr \Omega=\mu\restr \Omega$ and $ \pi_2\restr\Omega=\lambda\restr \Omega$ ($\pi_1$, $\pi_2$ denoting respectively the first and second marginals of $\pi$).

The triangle inequality holds,
\[ \Wb_{\Omega, p}( \mu, \nu) \le \Wb_{\Omega, p}(\mu, \lambda) + \Wb_{\Omega, p}(\nu, \lambda).\]
A geometric superadditivity property also holds: for every disjoint family $(\Omega_k)_{k\in \N}$ of open subsets of $\Omega$,
\[  \Wb_{\Omega, p}^p( \mu, \nu)  \ge \sum_{k} \Wb_{\Omega_k, p}^p( \mu, \nu). \]
The intuition behind this property is quite clear. Given any plan on $\Omega$, one should be able to suitably restrict it on each $\Omega_k$ by stopping the transport at each boundary. 
To prove it rigorously, it is sufficient to argue in the case of discrete measures $\mu = \sum_{i} \mu_i \delta_{x_i}$, $\nu = \sum_{i} \nu_i \delta_{y_i}$. The general case follows by approximation. Given $\pi \in \mathcal{C}b_{\Omega,p}(\mu, \nu)$, we define $(\pi_k)_{k \in \mathbb{N}}$ with $\pi_k \in \mathcal{C}b_{ \Omega_k,p}(\mu, \nu)$ such that
\begin{equation}\label{eq:super-proof} \int_{\overline{\Omega}\times\overline{\Omega}} |x-y|^p d \pi(x,y) \ge \sum_{k} \int_{\overline{\Omega_k}\times\overline{\Omega_k}} |x-y|^p d \pi_k(x,y).
\end{equation}
Again by approximation, we may also assume that $\pi = \sum_{\ell} \pi_\ell \delta_{(x_\ell, y_\ell)}$ is discrete, with $(x_{\ell}, y_{\ell}) \in \overline{\Omega}\times \overline{\Omega}$. 
The following algorithm can be used to define the sequence $(\pi_k)_{k}$. Set initially $\pi_k= 0$ for  every $k$. For every $\ell$, consider the pair $(x_\ell, y_\ell)$.
If $x_\ell,y_\ell\in \Omega_k$, then add $\pi_\ell \delta_{(x_\ell,y_\ell)}$ to $\pi_k$. If $x_\ell \in \Omega_k$ and $y_\ell \in \Omega_j$ with $k \neq j$, let
\[ t^- = \inf \cur{ t \in [0,1]: (1-t)x_\ell +ty_\ell \notin \Omega_k}\]
\[ t^+ = \inf \cur{t \in [0,1]: tx_\ell+ (1-t)y_\ell \notin \Omega_j}, \]
and define $y^-_\ell = (1-t^-)x_\ell+t^-y_\ell\in \partial{\Omega_k}$, $x^+_\ell = t^+x_\ell+ (1-t^+)y_\ell \in \partial{\Omega_j}$. Then, add respectively $\pi_\ell \delta_{(x_\ell, y^-_\ell)}$ to $\pi_k$ and $\pi_\ell \delta_{(x^+_\ell,y_\ell)}$ to $\pi_j$. 
If $x \in  \Omega_k$ for some $k$ and $y \in \overline\Omega \setminus \bigcup_{j}\Omega_j$, set  $t^-$ and $y^-_\ell$ as the previous case and add $\pi_\ell \delta_{(x_\ell, y^-_\ell)}$ to $\pi_k$.
Similarly, if $x   \in \overline\Omega \setminus \bigcup_{j}\Omega_j$ and $y \in \Omega_k$ then  add $\pi_\ell \delta_{(x^+_\ell, y)}$ to $\pi_k$.
In all the other cases, i.e., if $x_\ell,y_\ell \in \overline\Omega \setminus \bigcup_{k} \Omega_k$, do nothing. It is not difficult to check that $(\pi_k)_{k}$
thus defined indeed satisfies $\pi_k \in \mathcal{C}b_{ \Omega_k}$ and  \eqref{eq:super-proof} holds (using in particular that $p \ge 1$ to argue that
$|x_\ell-y_\ell|^p \ge |x_\ell-y^-_\ell|^p + |x^+_\ell-y_\ell|^p$).

Arguing similarly as in the proof of \eqref{eq:mainsub} we obtain a symmetric inequality: there exists a constant $C(p)>0$ such that, for every Borel partition $(\Omega_k)_{k\in \N}$ 
of $\Omega$, and every $\eps\in(0,1)$, 
 \begin{equation}\label{eq:mainsup}
 \begin{split} 
   \Wb^p_{ \Omega,p}\lt(\mu, \frac{\mu(\Omega)}{\lambda(\Omega)}\lambda\rt)   \ge & (1-\eps)\sum_k \Wb^p_{\Omega_k,p}\lt(\mu,\frac{\mu(\Omega_k)}{\lambda(\Omega_k)}\lambda\rt)\\
   & -\frac{C}{\eps^{p-1}}\Wb^p_{\Omega,p}\lt(\sum_k \frac{\mu(\Omega_k)}{\lambda(\Omega_k)}\chi_{\Omega_k}\lambda,\frac{\mu(\Omega)}{\lambda(\Omega)}\lambda\rt).
   \end{split}
 \end{equation}
 
To obtain lower bounds, we will also rely on the dual formulation of $\Wb_{\Omega, p}$ that reads
\begin{equation}\label{eq:duality} 
\begin{split} & \Wb_{\Omega, p}^p(\mu, \nu) =\\
  & \sup \cur{ \int_\Omega f d \mu + \int_\Omega g d \nu\, : \, \text{ $f(x)+g(y) \le |x-y|^p$ for $x$, $y \in \Omega$, $f = g = 0$ on $\partial \Omega$}}.
 \end{split}
\end{equation}
In fact, only the inequality $\ge$ will be used, which is immediate to check.

\begin{remark}
Many of the above properties, in particular \eqref{eq:mainsub} and \eqref{eq:mainsup}  hold for the Wasserstein distance defined over any length metric space, in particular on a Riemannian manifold.
\end{remark}

\subsection{A transport map via heat flow} 
The following result yields a Lipschitz map on the cube $(0,1)^d$  transporting a given H\"older probability density to the uniform one. The main point here is that the Lipschitz constant of the map is very close to $1$.  This construction, possibly interesting on its own, allows us to avoid the use of general boundary regularity theory for the optimal transport map with respect to the squared distance cost \cite{caffarelli1996boundary}, as it does not seem to be applicable to the case of the cube (it requires strictly convex and $C^2$ boundary). Notice that for H\"older continuous densities, counterexamples can be  constructed if we work with the Knothe map as in \cite{BeCa}. 

\begin{proposition}\label{prop:map-heat-semigroup}
For $d\ge 1$, $\alpha \in (0,1]$, there exists $C>0$ depending on $d$ and $\alpha$ only such that the following holds: for any $\rho: (0,1)^d \to (0, \infty)$ with
\[ \int_{(0,1)^d} \rho = 1 \quad \text{and} \quad  \nor{\rho -1}_{C^\alpha} \le 1/2,\]
there exists $T: (0,1)^d \to (0,1)^d$ such that $T_{\sharp} \rho = 1$, with
\[ \Lip T, \Lip T^{-1} \le  1 + C \nor{ \rho -1}_{C^\alpha} \quad \text{and} \quad  T(\partial(0,1)^d) = \partial(0,1)^d.\]
\end{proposition}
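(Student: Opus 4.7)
The plan is to build $T$ as the time-$\infty$ flow of a Dacorogna--Moser type velocity field obtained from the Neumann heat semigroup interpolation between $\rho$ and the uniform density. Let $Q=(0,1)^d$, let $P_s$ denote the Neumann heat semigroup on $Q$ (the one from the preliminaries), and set $\rho_s:=P_s\rho$, so $\rho_0=\rho$, $\rho_s\to 1$ as $s\to\infty$, $\int_Q \rho_s=1$. The ansatz is to define the velocity field
\[
v_s \;:=\; -\frac{\nabla \rho_s}{\rho_s},
\]
which, combined with the heat equation $\partial_s\rho_s=\Delta \rho_s=\nabla\cdot(\rho_s\,(-v_s))^\top$, gives the continuity equation $\partial_s\rho_s+\nabla\cdot(\rho_s v_s)=0$; then $T$ will be the flow map $X_\infty$ of $v_s$ starting from the identity.

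The whole argument rests on quantitative estimates for $\rho_s$. By reflecting $\rho-1$ across each face of $Q$ and extending periodically, the Neumann semigroup on $Q$ coincides with the free heat semigroup on $\R^d$ acting on a function with the same $C^\alpha$ seminorm. Combining the Gaussian derivative bounds with the H\"older regularity gives the \emph{integrable} singularity
\[
\|\nabla^k \rho_s\|_\infty=\|\nabla^k P_s(\rho-1)\|_\infty\;\lesssim\; s^{(\alpha-k)/2}\,\|\rho-1\|_{C^\alpha}, \qquad k\in\{1,2\},\ s\in(0,1],
\]
while the spectral gap combined with ultracontractivity (see \eqref{eq:stronger-ultra}) upgrades this to exponential decay $\|\nabla^k\rho_s\|_\infty\lesssim e^{-cs}\|\rho-1\|_{C^\alpha}$ for $s\ge 1$. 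Since $\|\rho-1\|_\infty\le 1/2$ we have $\rho_s\ge 1/2$ everywhere, hence $\|v_s\|_\infty\lesssim \|\nabla\rho_s\|_\infty$ and $\|Dv_s\|_\infty\lesssim \|D^2\rho_s\|_\infty+\|\nabla\rho_s\|_\infty^2$. Summing the two regimes,
\[
\int_0^\infty \|v_s\|_\infty\,ds+\int_0^\infty\|Dv_s\|_\infty\,ds\;\lesssim\;\|\rho-1\|_{C^\alpha}\!\left(\int_0^1 s^{-1+\alpha/2}\,ds+\int_1^\infty e^{-cs}\,ds\right)\lesssim\;\|\rho-1\|_{C^\alpha},
\]
where I crucially use $\alpha>0$ to make the inner integral finite.

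With these bounds in hand, the flow $\dot X_s=v_s(X_s)$, $X_0=\mathrm{Id}$, is well defined; Gronwall applied to $DX_s$ yields $\Lip X_s\le \exp(C\|\rho-1\|_{C^\alpha})$, and the same holds for the backward flow. The $L^\infty$-integrability of $v_s$ ensures that $X_s$ converges uniformly to a limit, which I define as $T$. Under the smallness assumption $\|\rho-1\|_{C^\alpha}\le 1/2$ the exponential can be linearised to give $\Lip T,\Lip T^{-1}\le 1+C\|\rho-1\|_{C^\alpha}$. The pushforward identity $T_\sharp\rho=1$ follows from the Dacorogna--Moser calculation $(X_s)_\sharp\rho=\rho_s\to 1$. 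For the boundary, the Neumann condition $\partial_n\rho_s=0$ on $\partial Q$ translates into $v_s\cdot n=0$ on each open face, so the flow preserves every face; edges and corners are preserved by the same argument applied to lower-dimensional faces (or by symmetry of the reflected extension), giving $T(\partial Q)=\partial Q$.

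The main obstacle I anticipate is precisely the integrability of $\|Dv_s\|_\infty$ near $s=0$: the naive bound $\|D^2 P_s\|\lesssim s^{-1}$ coming from $L^\infty$ data would not integrate, and it is the H\"older gain of $s^{\alpha/2}$ that makes the method work; this is exactly the place where the hypothesis $\alpha>0$ is used, and why, as noted in the introduction, the Knothe-map construction of \cite{BeCa} does not extend. A secondary, lower-level issue is to justify the reflection identification cleanly (the reflected extension is H\"older on $\R^d$ only because odd-order normal derivatives would be forced to vanish on $\partial Q$, which is automatic here since we only reflect, not extend by oddness) and to keep $\rho_s$ bounded away from $0$ along the interpolation, but both follow from the standing smallness assumption on $\|\rho-1\|_{C^\alpha}$.
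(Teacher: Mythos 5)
Your proposal is correct and follows essentially the same route as the paper's proof: heat-flow interpolation $\rho_s=P_s\rho$ with the Dacorogna--Moser velocity $-\nabla\rho_s/\rho_s$, the H\"older-improved kernel bounds $\|\nabla^k\rho_s\|_\infty\lesssim s^{(\alpha-k)/2}e^{-cs}\|\rho-1\|_{C^\alpha}$ giving time-integrability, Gronwall for the Lipschitz bounds on the forward and backward flows, uniqueness for the continuity equation to get $T_\sharp\rho=1$, and tangency of the field from the Neumann condition to preserve $\partial(0,1)^d$. The only cosmetic difference is that you justify the kernel estimates by reflection to the free heat semigroup, while the paper invokes the explicit Neumann kernel, which amounts to the same thing.
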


\begin{proof}
Using the Neumann heat semi-group, we define $\rho_t = P_t \rho$, so that, for $t \ge 0$, the weak formulation of the heat equation reads, for every for every $f \in H^1((0,1)^d)$, 
\begin{equation*}\label{eq:neumann-weak} \int_{(0,1)^d} f \rho_t  = \int_{(0,1)^d} f \rho - \int_0^t \int_{(0,1)^d} \nabla f  \, \nabla \rho_t d t, \quad \text{for every $t \ge 0$.} \end{equation*}
We notice first that the assumption gives $\inf_{(0,1)^d} \rho \ge 1/2$ so that, for every $t \ge 0$, $\inf_{(0,1)^d} \rho_t \ge 1/2$ as well, hence
\[ \nor{ \rho_t^{-1} }_{\infty} \les 1.\]
By standard heat kernel estimates (one has in fact an explicit representation of $p_t(x,y)$, see \cite[Appendix A]{AmGlau} for the case $d=2$) we have, for every $t>0$, 
\[ \nor{ \nabla \rho_t }_{\infty} \les t^{-\frac 1 2  + \frac{\alpha}{2}} e^{-ct} \nor{ \rho-  1}_{C^\alpha}\quad \text{and} \quad \nor{ \nabla^2 \rho_t }_{\infty} \les t^{-1  + \frac{\alpha}{2}} e^{-ct} \nor{ \rho-  1}_{C^\alpha}.\]
We thus define the time-dependent vector field $(b_t)_{t >0}$, on $(0,1)^d$,
\[ b_t(x) = - \nabla \log \rho_t(x) = -\frac{\nabla \rho_t(x)}{\rho_t(x)}, \quad \text{with} \quad  \nabla b_t(x) = - \frac{\nabla^2 \rho_t(x)}{\rho_t(x)} + \frac{\nabla \rho_t(x) \otimes \nabla \rho_t(x)}{\rho_t^2(x)}.\]
Using the previous estimates, it follows that
\[ \nor{ b_t}_{\infty} \les t^{-\frac 1 2  + \frac{\alpha}{2}} e^{-ct} \nor{ \rho-  1}_{C^\alpha} \quad \text{and} \quad \nor{ \nabla b_t}_{\infty} \les t^{-1  + \frac{\alpha}{2}} e^{-ct}  \nor{ \rho-  1}_{C^\alpha} .\]
Since the right-hand sides  are integrable functions with respect to $t \in (0, \infty)$,  standard arguments  yield that the associated flow $X(t,x)$, i.e., the solution to
\[ X(t,x) = x+ \int_0^t b_s(X(s,x)) d s \quad \text { for $t \ge 0$, $x \in (0,1)^d$}\]
is well-defined and Lipschitz continuous:
\begin{equation}\label{eq:lip-t} \Lip X(t,\cdot) \le \exp\bra{ \int_0^t \nor{\nabla b_s}_\infty d s } \le \exp\bra{  C  t^{\frac{\alpha}{2}} e^{-ct}  \nor{ \rho-  1}_{C^\alpha} }.\end{equation}
Because of  the homogeneous Neumann boundary condition, $b_t$ is tangent to the boundary $\partial (0,1)^d$, hence $X(t,x) \in (0,1)^d$ for every $t \ge 0$. Moreover, $\lim_{t \to \infty} X(t,x) \in (0,1)^d$ exists for every $x \in (0,1)^d$, since, for $s \le t$,
\[ \abs{ X(t,x) - X(s,x) } \le  \int_s^t\nor{ b_r}_\infty d r \les s^{\frac{\alpha}{2}}e^{-cs} \nor{ \rho-  1}_{C^\alpha},\]
which yields that $t \mapsto X(t,x)$ is Cauchy as $t \to \infty$.  We thus define $T(x) = \lim_{t \to \infty}X(t,x)$. Letting $t \to \infty$ in \eqref{eq:lip-t}, we have
\[  \Lip T  \le \exp\bra{ C   \nor{ \rho-  1}_{C^\alpha} } \le 1 + C\nor{ \rho-  1}_{C^\alpha}.\]
 To prove that $T$ is invertible, we simply notice that for every $t >0$ the inverse map of $x \mapsto X(t,x)$ is given by $y \mapsto Y^t(t,y)$, $Y^t$ being the flow of the ``backward'' in time vector field $(b^{t}_s)_{s \in [0,t]}$, $b^t_s(y) = -b_{(t-s)}(y)$. Writing the analogue of \eqref{eq:lip-t} for $y \mapsto Y^t(t,y)$ and letting $t \to \infty$ yields the claimed bound on $\Lip T^{-1}$. 

To conclude, we need to argue that $T_\sharp \rho = 1$.
This follows quite classically from the fact that $\tilde \rho_t = X(t, \cdot)_{\sharp} \rho$ and $\rho_t$ both solve the same continuity equation for the vector field $(b_t)_{t>0}$.
Since $b_t$ is locally Lipschitz continuous with integrable in time Lipschitz norm, 
uniqueness holds for the Cauchy problem (this follows for example by a slight adaptation of  \cite[Theorem 4.4]{Santam}). 
Therefore, $\rho_t = \tilde \rho_t$ and $T_\sharp \rho = \lim_{t \to \infty} \tilde \rho_t = \lim_{ t \to \infty} \rho_t = 1$. \end{proof}

In general, if $\Omega \subseteq \R^d$ is open and $T: \Omega \to \Omega$ is Lipschitz, then for any pair of measures $\mu$, $\lambda$  with $\mu(\Omega) = \lambda(\Omega)$,
\begin{equation}\label{eq:W-T} W_{\Omega,p} (T_\sharp (\mu \restr \Omega), T_\sharp(\lambda \restr \Omega)) \le (\Lip T)  W_{\Omega,p} (\mu, \lambda),\end{equation}
since any coupling $\pi$ between $\mu \restr \Omega$ and $\lambda \restr \Omega$ induces the coupling $(T,T)_{\sharp} \pi$. For the ``boundary'' optimal transport a similar inequality holds provided that $T: \overline{\Omega} \to \overline{\Omega}$ is such that $T(\Omega) \subseteq \Omega$ and $T(\partial \Omega) \subseteq \partial \Omega$:
\begin{equation}\label{eq:Wb-T} \Wb_{\Omega,p} (T_\sharp (\mu \restr \Omega), T_\sharp(\lambda \restr \Omega)) \le (\Lip T)  \Wb_{\Omega,p} (\mu, \lambda).\end{equation}
Combining these observations with the Proposition \ref{prop:map-heat-semigroup}, we obtain the following result for the bipartite matching problem, extending \cite[Lemma 1]{BeCa} to the case of H\"older continuous densities. 

\begin{lemma}\label{lem:heat-cube} Let $d \ge 1$, $\alpha \in (0,1]$, $p \ge 1$ and let $\rho$ be  a uniformly positive and $\alpha$-H\"older
continuous function on a domain $\Omega \subseteq \R^d$. Then, there exists a constant $c = C( p, \rho)>0$ such that, for every cube $Q \subseteq \Omega$ with side length $r<c$, the following holds:
\begin{enumerate}
\item if $(X_i)_{i=1}^n$ are  i.i.d.\ on $Q$ with common density $\rho/\rho(Q)$ and $(X_i^r)_{i=1}^n$ are i.i.d.\ uniformly distributed on $[0,r]^d$, then
\begin{multline}\label{eq:holder-cube} 
 \lt|\EE\sqa{ W_*^p\bra{   \frac 1 n \sum_{i=1}^n \delta_{X_i}, \frac{\rho}{\rho(Q)} }}- \EE\sqa{ W_{*}^p\bra{  \frac 1 n \sum_{i=1}^n \delta_{X_i^r}, \frac{1}{|Q|} }}\rt|\\
 \les r^\alpha \EE\sqa{ W_{*}^p\bra{    \frac 1 n \sum_{i=1}^n \delta_{X_i^r}, \frac{1}{|Q|} }},
\end{multline}
where  $W_*\in\{W_{Q,p},\Wb_{Q,p}\}$;
\item with the same notation, if $(X_i)_{i=1}^n$, $(Y_j)_{j=1}^m$ are (jointly) i.i.d.\ on $Q$ with common density $\rho/\rho(Q)$ and $(X_i^r)_{i=1}^n$, $(Y^r_j)_{j=1}^m$ are (jointly) i.i.d.\ and uniformly distributed on $[0,r]^d$, then
\begin{multline}\label{eq:holder-cube-mn} 
 \lt|\EE\sqa{ W_*^p\bra{   \frac 1 n \sum_{i=1}^n \delta_{X_i}, \frac 1 m \sum_{j=1}^m \delta_{Y_j} } }- \EE\sqa{ W_{*}^p\bra{  \frac 1 n \sum_{i=1}^n \delta_{X_i^r}, \frac 1 m \sum_{j=1}^m \delta_{Y_j^r} }}\rt|\\
 \les r^\alpha \EE\sqa{ W_{*}^p\bra{    \frac 1 n \sum_{i=1}^n \delta_{X_i^r}, \frac 1 m \sum_{j=1}^m \delta_{Y_j^r} }}.
\end{multline}
\end{enumerate}

\end{lemma}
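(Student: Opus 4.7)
The plan is to rescale $Q$ to the unit cube and invoke Proposition \ref{prop:map-heat-semigroup} to obtain a bilipschitz transport map from $\rho/\rho(Q)$ to the uniform measure, with Lipschitz constants $1+O(r^\alpha)$. By translation invariance I may assume $Q = [0,r]^d$. Set $\tilde \rho(y) := r^d\rho(ry)/\rho(Q) = \rho(ry)/\bar \rho$, where $\bar \rho := \rho(Q)/r^d$ is the mean of $\rho$ on $Q$. Then $\int_{[0,1]^d}\tilde \rho = 1$, and since $\rho$ is $\alpha$-H\"older and uniformly bounded away from $0$, one checks directly that $\nor{\tilde \rho - 1}_{C^\alpha([0,1]^d)} \les r^\alpha$. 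For $r$ smaller than some $c = c(\rho)>0$ the condition $\nor{\tilde\rho - 1}_{C^\alpha} \le 1/2$ of Proposition \ref{prop:map-heat-semigroup} is met, producing a bijection $T:[0,1]^d\to[0,1]^d$ with $T_\sharp \tilde\rho = 1$, $T(\partial[0,1]^d) = \partial[0,1]^d$, and $\Lip T,\Lip T^{-1}\le 1+C r^\alpha$. Undoing the rescaling, the map $R(x) := r T(x/r)$ is a bilipschitz self-bijection of $Q$ with the same Lipschitz bounds, sends $\partial Q$ to $\partial Q$, and satisfies $R_\sharp(\rho/\rho(Q)) = 1/|Q|$ by a direct change of variables.

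For (1), couple by $X_i^r := R(X_i)$, so that the $X_i^r$ are i.i.d.\ uniform on $Q$. Applying \eqref{eq:W-T} to $R$, or \eqref{eq:Wb-T} in the boundary case (which is allowed precisely because $R(\partial Q) = \partial Q$), yields almost surely
\[
W_*^p\bra{\frac 1 n\sum_i \delta_{X_i^r}, \frac{1}{|Q|}} \le (1+Cr^\alpha)^p\, W_*^p\bra{\frac 1 n \sum_i \delta_{X_i}, \frac{\rho}{\rho(Q)}},
\]
while the analogous estimate for $R^{-1}$ gives the reverse inequality. Taking expectations, subtracting, and using $(1+Cr^\alpha)^p - 1 \les r^\alpha$ yields \eqref{eq:holder-cube}. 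Part (2) is handled identically: couple $X_i^r := R(X_i)$ and $Y_j^r := R(Y_j)$ jointly, using the independence of the two families, and apply the same bilipschitz comparison via $R$ and $R^{-1}$ to obtain \eqref{eq:holder-cube-mn}.

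The main obstacle is really contained in Proposition~\ref{prop:map-heat-semigroup} itself, namely producing a transport map whose Lipschitz constant is $1+O(r^\alpha)$ \emph{and} which preserves the boundary of the cube --- both properties being needed to pass from $(X_i)$ to $(X_i^r)$ while controlling the (boundary) Wasserstein cost to leading order. Given that proposition, the remainder reduces to a routine bilipschitz-pushforward comparison whose only careful step is the $C^\alpha$ estimate of the rescaled density $\tilde\rho - 1$.
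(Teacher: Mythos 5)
Your proposal is correct and follows essentially the same route as the paper: rescale $Q$ to the unit cube, verify $\nor{\tilde\rho-1}_{C^\alpha}\les r^\alpha$ so Proposition~\ref{prop:map-heat-semigroup} applies, transport via $S(x)=rT(x/r)$ (which fixes $\partial Q$ and pushes $\rho/\rho(Q)$ to the uniform density), and compare the two costs through \eqref{eq:W-T} and \eqref{eq:Wb-T} with Lipschitz constant $1+Cr^\alpha$ before taking expectations. The only cosmetic difference is that the paper states the argument for one inequality (the lower bound for $\Wb$) and notes the others are identical, whereas you spell out both directions via $R$ and $R^{-1}$.
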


\begin{proof}
The proof is identical to the proof of \cite[Lemma 1]{BeCa} but we include it for the reader's convenience. We only prove that for $r$ small enough,
\[
(1-C r^\alpha)\EE\sqa{ \Wb_{*}^p\bra{  \frac 1 n \sum_{i=1}^n \delta_{X_i^r}, \frac{1}{|Q|} }} \le  \EE\sqa{ \Wb_{*}^p\bra{  \frac 1 n \sum_{i=1}^n \delta_{X_i}, \frac{\rho}{\rho(Q)} }}
\]
since the other inequalities may be proven in the same way. Without loss of generality we assume $Q = (0,r)^d$. Let also $\bar{\rho} = \min_{\Omega} \rho$. 
We define $\rho^r(x) = \rho(rx)r^d/\rho(Q)$ for $x \in (0,1)^d$, so that $\int_{(0,1)^d} \rho^r = 1$, and for every $x$, $y\in (0,1)^d$,
\[ \rho^r(x) - \rho^r(y) \le \frac{ \nor{\rho}_{C^\alpha}}{\bar{\rho}} r^\alpha |x-y|^\alpha,\]
thus $\nor{ \rho^r - 1 }_{C^\alpha}\le 1/2$ if $r$ is sufficiently small.
 We define $S: \overline{Q} \to \overline{Q}$ as  $S(x) = r T(x/r)$, where $T$ is the map provided by Proposition~\ref{prop:map-heat-semigroup}.
 It holds $\Lip S = \Lip T$ and $\Lip S^{-1} = \Lip T^{-1}$,  $S_\sharp\rho/\rho(Q) = 1/|Q|$ is uniform, hence the variables $(S(X_i))_{i=1}^n$ have the same law as $(X_i^r)_{i=1}^n$. Moreover, 
\[ S_\sharp \bra{ \frac{ 1}{n} \sum_{i=1}^n \delta_{X_i}} = \frac 1 n \sum_{i=1}^n \delta _{S(X_i)}\]
has the same law as $\frac 1 n \sum_{i=1}^n \delta_{X_i^r}$. Since $S(\partial Q) = \partial Q$ and $S(Q) = Q$, it follows from \eqref{eq:W-T} and  \eqref{eq:Wb-T} that
\[   \EE\sqa{ \Wb_*^p\bra{  \frac 1 n \sum_{i=1}^n \delta_{X_i^r}, \frac{1}{|Q|} }} \le (\Lip T)^p \EE\sqa{ \Wb_*^p\bra{  \frac 1 n \sum_{i=1}^n \delta_{X_i}, \frac{\rho}{\rho(Q)} }},\]
This proves the claim provided $r$ is   small enough so that  $(\Lip T)^p \le (1-C r^\alpha)^{-1}$. 
\end{proof}

\section{Asymptotic equivalence of the usual and boundary costs for uniform points on the square}\label{sec:wd}

In this section we show how to  modify the proofs from \cite{AST}, using also ideas from \cite{AmGlau}, to obtain the analogue for $\Wb$ of the limits
\begin{equation}\label{eq:ast-cube-ref} \lim_{n \to \infty} \frac{n}{\log n} \EE\sqa{ W_{2}^2 \bra{\frac 1 n \sum_{i=1}^n \delta_{X_i}, 1 }} = \frac 1 {4 \pi},\end{equation}
and
\[ \lim_{n \to \infty} \frac{n}{\log n} \EE\sqa{ W_{2}^2 \bra{\frac 1 n \sum_{i=1}^n \delta_{X_i}, \frac 1 n \sum_{i=1}^n \delta_{Y_i} }} = \frac 1 {2 \pi}, \]
where $(X_i, Y_i)_{i=1}^\infty$ are i.i.d.\ uniformly distributed on $(0,1)^d$. We deal first with the simpler case of matching to the reference measure.

\begin{proposition}\label{prop:ast-dirichlet}
Let $(X_i)_{i =1 }^\infty$ be i.i.d.\ uniformly distributed on $(0,1)^2$. Then
\[ \lim_{n \to \infty} \frac{n}{\log n} \EE\sqa{ \Wb_{(0,1)^2, 2}^2 \bra{\frac 1 n \sum_{i=1}^n \delta_{X_i}, 1 }} = \frac 1 {4 \pi}.\]
\begin{proof}
To simplify the notation, we write $\Wb$ instead of $\Wb_{(0,1)^2, 2}$ and $W$ instead of $W_{(0,1)^2 ,2}$. Since $\Wb \le W$, it is sufficient to prove that
\begin{equation}\label{eq:lower-bound-square} \liminf_{n \to \infty} \frac{n}{\log n} \EE\sqa{ \Wb^2 \bra{ \frac 1 n \sum_{i=1}^n \delta_{X_i}, 1}} \ge  \frac 1 {4 \pi}.\end{equation}
We do it in several steps. 

\emph{Step 1 (Regularization)}.
Write $\mu_n = \frac 1 n \sum_{i=1}^n \delta_{X_i}$, and for $t \ge 0$,
\[ \mu^{t}_n(x) = P_t \mu_n =\frac{1}{n} \sum_{i=1}^n p_t(x,X_i),\]
where we recall that $(P_t)_{t \ge 0}$ denotes the Neumann heat semi-group on $(0,1)^d$ with kernel $p_t(x,y)$. The triangle inequality for $\Wb$ and the inequality $\Wb \le W$ give
\begin{equation*}
\begin{split} \Wb^2 \bra{\mu^t_n, 1} & \le \bra{\Wb(\mu_{n}, 1)+ W(\mu_{n}, \mu^t_n )}^2 \\
& \le  \Wb^2(\mu_{n}, 1) +  W(\mu_n, \mu^t_n) \bra{W(\mu_n, \mu^t_n) + 2 W(\mu_{n}, 1)}.
\end{split}
\end{equation*}
We choose  $t = (\log n)^4/n$, so that, the refined contractivity estimate \cite[Theorem 5.2]{AmGlau} gives
\[ \EE\sqa{ W^2(\mu_n, \mu^t_n)} \lesssim \frac{ \log\log n}{n}.\]
Together with the upper bound 
\[ \EE\sqa{ W^2(\mu_{n}, 1)} \lesssim \frac{\log n}{n},\]
which follows from \eqref{eq:ast-cube-ref}, we obtain that for $n$ large enough,
\begin{equation}\label{eq:end-step-1-dirichlet-prop}\EE\sqa{\Wb^2(\mu_{n}, 1)} \geq \EE\sqa{  \Wb^2 \bra{\mu^t_n, 1}} - C \frac{  \sqrt{\log n \log \log n}}{n}.\end{equation}
Thus, it is sufficient to prove \eqref{eq:lower-bound-square} with $\mu^t_n$ instead of $\mu_n$.

\emph{Step 2 (PDE and energy estimates)}. Let $f^t_n$ be the solution to $-\Delta f^t_n = \mu^t_n -1$ with null Neumann boundary conditions, i.e., since $P_s \mu^t_n = \mu^{n,t+s}$,
\[ f^t_n = \int_t^\infty (\mu^s_n - 1 )ds.\]
  Recall that, as proved in \cite{AST} and \cite[Lemma 3.14]{AmGlau},
\begin{equation}\label{eq:A} \EE\sqa{ \int_{(0,1)^2} \abs{ \nabla f^t_n}^2 } = \frac{\log n}{ 4 \pi n} + O\bra{ \frac 1 n}\end{equation}
and 
\begin{equation}\label{eq:B} \EE\sqa{ \int_{(0,1)^2} \abs{ \nabla f^t_n}^4} \lesssim \bra{ \frac{\log n}{n}}^2.\end{equation}
In addition to these gradient estimates, we will also need the upper bounds
\begin{equation}\label{eq:C} \EE\sqa{ \int_{(0,1)^2} \abs{ f^t_n}^2 }\lesssim  \frac{ 1}{n},\quad\text{and} \quad  \EE\sqa{ \int_{(0,1)^2} \abs{ f^t_n}^4 }\lesssim  \frac{ 1}{n^2},\end{equation} 
which can be proved in a similar way. Indeed, for every $x\in (0,1)^2$,
\[ \mu^s_n(x) -1 = \frac 1 n \sum_{i=1}^n \bra{ p_s(x,X_i) - 1} = \frac 1 n\sum_{i=1}^n \xi_i\]
is a sum of centered i.i.d.\ random variables $(\xi_i)_{i=1}^n$. For $s \in (0,1)$, we bound from above
\[ \EE\sqa{ \xi^2_i } = \int_{(0,1)^2} p_s^2(x,y)  - 1 = p_{2s}(x,x) -1 \les s^{-1} \]
where we used the ultracontractivity \eqref{eq:UC} for the heat kernel  and similarly
\[ \EE\sqa{ \xi^4_i} \les  \int_{(0,1)^2} p_s^4(x,y) + 1 \les s^{-2} \int_{(0,1)^2} p_s^2(x,y)  + 1 \les s^{-3}.\]
This yields immediately that
\[ \int_{(0,1)^2}  \EE\sqa{ \bra{ \mu^s_n(x) -1}^2 } \les \frac 1 {ns}.\]
Using Rosenthal's inequality \cite{rosenthal1970subspaces} (see also \cite{Le17}) we also get
\[ \int_{(0,1)^2} \EE\sqa{  (\mu_n^s(x) - 1 )^4  }  \les  \frac{1}{n^3 s^{3}}  +   \frac 1 {n^2s^{2}}.\]
For $s>1$, we use \eqref{eq:stronger-ultra}, and $p\in \cur{2,4}$,
\begin{equation*}\label{eq:bound-norm-s-1}\begin{split} \nor{ \mu^s_n - 1}_p  & \le  \nor{ \mu^s_n - 1}_\infty \les e^{-cs/2} s^{-d/2} \nor{  \mu^{s/2}_n - 1}_1  \le e^{-cs/2} s^{-d/2} \nor{  \mu^{s/2}_n - 1}_p \\
& \les   e^{-cs/2}  \|  \mu^{1/2}_n - 1\|_p.
\end{split}\end{equation*}
Taking expectation and using also the upper bounds above for $s=1/2$, it follows that for some constant $c>0$,
\[ \int_{(0,1)^2} \EE\sqa{ \bra{ \mu^s_n(x) -1}^p } \les \frac{e^{-cs }}{n^{p/2}}.\]
For $p=2$, we obtain
\[  \int_t^\infty \EE\sqa{ \bra{ \mu^s_n(x) -1}^2 }^{1/2} \les  \int_t^1  \frac 1 {\sqrt{ns}} ds + \int_1^\infty \frac{e^{-cs/2 }}{\sqrt{n}} ds \les \frac{1}{\sqrt{n}},\]
and, for $p=4$,
\[\int_t^\infty \EE\sqa{ \bra{ \mu^s_n(x) -1}^4 }^{1/4} \les \int_t^1  \bra{ \frac{1}{n^3 s^{3}}  +   \frac 1 {n^2s^{2}}}^{1/4} ds + \int_1^\infty \frac{e^{-cs/4 }}{\sqrt{n}} ds \les \frac{1}{\sqrt{n}}.\]

We conclude, for $p=2$ that
\[\begin{split} \EE\sqa{ \int_{(0,1)^2} |f^t_n|^2} & = \int_t^\infty \int_t^\infty  \EE\sqa{ \int_{(0,1)^2} (\mu^{n,s_1} - 1)(\mu^{n,s_2} - 1)} d s_1 d s_2\\
& \le \int_t^\infty \int_t^\infty \EE\sqa{ \int_{(0,1)^2} \bra{ \mu^{n,s_1}(x) -1}^2 }^{1/2}\EE\sqa{\int_{(0,1)^2} \bra{ \mu^{n,s_2}(x) -1}^2 }^{1/2} d s_1 d s_2,\\
& \le \bra{ \int_t^\infty \EE\sqa{ \int_{(0,1)^2} \bra{ \mu^s_n(x) -1}^2 }^{1/2} d s}^2 \les \frac 1 n,
\end{split}\]
and similarly for $p=4$,
\[\begin{split} \EE\sqa{ \int_{(0,1)^2} |f^t_n|^4}  \le \bra{ \int_t^\infty \EE\sqa{ \int_{(0,1)^2} \bra{ \mu^s_n(x) -1}^4 }^{1/4} d s}^4 \les \frac 1 {n^2}.
\end{split}\]


\emph{Step 3 (Dual potential)}. For $\eta\in (0,1)$ we introduce a smooth cut-off function $\chi_{\eta}: (0,1)^2 \to [0, 1]$ such that $\chi_{\eta}(x)=1$ on $[\eta, 1-\eta]^2$, $\chi_{\eta}(x) = 0$ on $(0,1)^2 \setminus [\eta/2, 1-\eta/2]^2$, with 
\[ \nor{ \nabla \chi_\eta}_\infty \lesssim \eta^{-1}, \quad \nor{ \nabla^2 \chi_ \eta}_\infty \lesssim \eta^{-2}.\]
The function $f_{n} = f^t_n \chi_{\eta}$ can be extended by periodicity to a function on the  flat torus $\mathbb{T}^2$. By \cite[Corollary 3.3]{AST} applied to $\mathbb{T}^2$, there exists $g_n\in C(\mathbb{T}^2)$  such that for $x$, $y \in \mathbb{T}^2$,
\[  f_n (x) +  g_n (y) \le \frac { d_{\mathbb{T}^2}(x,y)^2}{2}, \quad \text{and}\quad \int_{\mathbb{T}^2}( f_n + g_n)  \ge - e^{\nor{\Delta  f_n}_\infty} \int_{\mathbb{T}^2} |\nabla  f_n|^2.\]
We may naturally interpret $g_n$ as being defined on $(0,1)^2$, so that using $d_{\mathbb{T}^2}(x,y) \le |x-y|^2$, we have for $x$, $y \in (0,1)^2$,
\[  f_n (x) +  g_n(y) \le \frac{|x-y|^2}{2}.\]
To prove that $g_n(x) = 0$ on $\partial (0,1)^2$, we recall that \cite[Remark 3.4]{AST} $g_n$ is also the unique viscosity solution to the Hamilton-Jacobi equation $\partial_t u +\frac 1 2 |\nabla u|^2 = 0$ with initial condition $- f^n$, hence, it coincides with the Hopf-Lax solution
\[  g_n(x) = \inf_{y \in (0,1)^2} \cur{  - f_n(y) + \frac{ d_{\mathbb{T}^2}(x,y)^2}{2}}.\]
From this representation, we see at once that, if
 \begin{equation}\label{eq:norm-f-hj} \nor{ f_n}_\infty \le \frac{\eta^2}{8}\end{equation}
 then $ g_n(x) = 0$ on $\partial (0,1)^2$, because that $f_n(y) = f^t_n(y) \chi_\eta(y) = 0$ if $x \in \partial(0,1)^2$ and $d_{\mathbb{T}^2}(x,y)<\eta/2$.

Let us also notice that, since $\nabla  f_n = \nabla f^t_n \chi_\eta + f^t_n \nabla \chi_\eta$, we have by Young and Cauchy-Schwarz
\[ \begin{split} &  \int_{(0,1)^2} \abs{ \nabla  f_n - \nabla f^t_n}^2   \les  \int_{(0,1)^2} |\nabla f^t_n|^2 (1- \chi_\eta)^2 + \int_{(0,1)^2} \abs{f^t_n \nabla \chi_\eta}^2\\
& \quad \les \bra{ \int_{(0,1)^2} |\nabla f^t_n|^4}^{1/2} \eta^{1/2}  + \int_{(0,1)^2} |f^t_n|^2  \eta^{-2},\end{split}\]
which in expectation gives by \eqref{eq:B} and \eqref{eq:C}
\begin{equation}\label{eq:bound-exp-nabla-nabla} \EE\sqa{ \int_{(0,1)^2} \abs{ \nabla  f_n - \nabla f^t_n}^2 } \les \frac{\log n}{n} \eta^{1/2}  + \frac{1}{n} \eta^{-2}.\end{equation}
Using \eqref{eq:A} and Young inequality we find for every $\eps>0$,
\[
 \limsup_{n \to \infty} \lt|\frac {n}{\log n} \EE\sqa{ \int_{(0,1)^2} \abs{\nabla  f_n}^2 }-  \frac{1}{4 \pi}\rt|\les  \eps + \frac{C}{\eps} \eta^{1/2}.
\]
Choosing $\eps=\eta^{1/4}$ we conclude 
\begin{equation}\label{eq:gradfn2}
 \limsup_{n \to \infty} \lt|\frac {n}{\log n} \EE\sqa{ \int_{(0,1)^2} \abs{\nabla  f_n}^2 }-  \frac{1}{4 \pi}\rt|\les \eta^{1/4}.
\end{equation}
Arguing similarly, we have
\begin{equation}\label{eq:gradfn4}  \bra{ \frac {n}{\log n}}^2 \EE\sqa{ \int \abs{\nabla  f_n}^4} \les 1 + \frac{ 1}{\eta^4 (\log n)^2}.\end{equation}
Let us finally argue that 
\begin{equation}\label{eq:infinity}
 \nor{f^t_n}_\infty+\nor{\nabla f^t_n}_\infty+\nor{\Delta f^t_n}_\infty \les \nor{ \nabla^2 f^t_n}_\infty.
\end{equation}
Since the estimate $\nor{\Delta f^t_n}_\infty \les \nor{ \nabla^2 f^t_n}_\infty$ is clear and $\nor{f^t_n}_\infty\les \nor{\nabla f^t_n}_\infty$ follows from $\int_{(0,1)^2} f^t_n=0$, 
we are left with the proof of $\nor{\nabla f^t_n}_\infty\les  \nor{ \nabla^2 f^t_n}_\infty$. Fix $x_2\in(0,1)$ and consider the function 
$\phi(x_1)=f^t_n(x_1,x_2)$. We have $\phi'(x_1)=\nabla f^t_n(x_1,x_2)\cdot e_1$ (here $(e_1,e_2)$ is the canonical basis of $\R^2$). By the Neumann boundary conditions, $\phi'(0)=\phi'(1)=0$, 
so that $\nor{\phi'}_\infty\le \nor{\phi''}_\infty\les \nor{\nabla^2 f^t_n}_\infty$ and thus $\nor{\nabla f^t_n\cdot e_1}_\infty\les \nor{\nabla^2 f^t_n}_\infty$. Exchanging the roles of $x_1$ and $x_2$ concludes the proof of the claim.\\

\emph{Step 4 (Conclusion)} In the event $A_{n} = \cur{ \nor{ \nabla^2 f^t_n}_\infty < 1/\log n}$  we use $(f_n,g_n)$, as defined in the previous step, in the duality \eqref{eq:duality}. The event $A_n$ has large probability, i.e. 
\begin{equation}\label{eq:highprob} \PP\bra{  A_{n}^c} \lesssim n^{-k},\end{equation}
for every $k >0$ (with an implicit constant depending on $k$ only, see \cite{AmGlau}), hence it is sufficient to bound from below the expectation of $\Wb^2(\mu^t_n, 1)$ on $A_n$ 
(using the trivial  bound $\Wb^2(\mu^t_n, 1) \le 2$ on $(A^n)^c$). 
If $A_n$ occurs, we  have by \eqref{eq:infinity},
\[ \nor{f^t_n}_\infty+\nor{\nabla f^t_n}_\infty+\nor{\Delta f^t_n}_\infty \les  \frac{1}{\log n},\]
%
so that \eqref{eq:norm-f-hj} holds if $n$ is sufficiently large (for fixed $\eta$) and
\[  \nor{ \Delta  f_n}_\infty \les \nor{\nabla^2 (f^t_n \chi_\eta) }_{\infty} \les \frac{1}{ \eta^{2} \log n }  = \omega_\eta(n),\]
where for fixed $\eta$, $\lim_{n\to \infty} \omega_\eta(n)=0$.   Thus,
\[\begin{split}
\frac12 \Wb^2(\mu^t_n, 1) & \geq
	\int_{(0,1)^2} ( f_n+ g_n) + \int  f_n (\mu^t_n-1) \\
&\geq - e^{\omega_\eta(n)} \int_{(0,1)^2} \frac{\abs{\nabla  f_n }^2}2 
	- \int_{(0,1)^2}  f_n \Delta  f^t_n  \\
&= \left(1-\frac{e^{\omega_\eta(n)}}{2}\right)
 \int_{(0,1)^2} \abs{\nabla f_n}^2 + \int_{(0,1)^2} \nabla f_n \cdot \bra{ \nabla   f^t_n - \nabla f_n }.
\end{split}\]
Taking expectations, we bound the second term in the right-hand side using \eqref{eq:bound-exp-nabla-nabla} and \eqref{eq:gradfn2},
\[\limsup_{n \to \infty} \frac{n}{\log n} \EE\sqa{ \int_{(0,1)^2}\abs{ \nabla f_n\cdot   \bra{ \nabla   f^t_n - \nabla f_n }}} \les \eta^{1/4}.\]
For the first term, we have
\[\begin{split}
 & \liminf_{n\to\infty} \frac{n}{\log n} \EE\left[
	(2- e^{\omega_\eta(n)})\chi_{A_n} \int_{(0,1)^2} \abs{\nabla f_{n}}^2  \right]
	 \\
& \geq \liminf_{n\to\infty}  \frac{n}{\log n} \EE\left[
	\int_{(0,1)^2} \abs{\nabla f_{n}}^2  \right]
	- \limsup_{n\to\infty} \frac{n}{\log n}  \EE\left[
	\chi_{A_n^c} \int_{(0,1)^2} \abs{\nabla f_{n}}^2  \right]
	 \\
&\stackrel{\eqref{eq:gradfn2}}{=} \frac 1{4\pi} - C\eta^{1/4},
\end{split}\]
where we used in the last line that 
\begin{multline*}
\limsup_{n\to\infty} \frac{n}{\log n}\EE\left[
	\chi_{A_n^c} \int_{(0,1)^2} \abs{\nabla f_{n}}^2  \right]  \leq \limsup_{n\to\infty} \PP(A_n^c)^{1/2}
\frac{n}{\log n}\EE\left[ \int_{(0,1)^2} \abs{\nabla f_{n}}^4  \right]^{1/2}\\
\stackrel{\eqref{eq:highprob}\&\eqref{eq:gradfn4}}{=} 0.
\end{multline*}
Letting $\eta \to 0$ we finally obtain the thesis.
\end{proof}
\end{proposition}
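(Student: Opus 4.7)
Since $\Wb \le W$, the upper bound $\limsup \le 1/(4\pi)$ is immediate from \eqref{eq:ast-cube-ref}. The task is therefore to prove the matching lower bound $\liminf \ge 1/(4\pi)$. The strategy is to transplant the PDE-based duality approach of \cite{AST,AmGlau} to the boundary cost $\Wb$, the essential new ingredient being that admissible dual pairs for $\Wb$ (in \eqref{eq:duality}) must vanish on $\partial(0,1)^2$. We will achieve this by multiplying the natural Kantorovich potential by a boundary cutoff, showing that the resulting loss is negligible at the scale $\log n/n$.

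First, replace the empirical measure $\mu_n=\tfrac 1 n\sum_i\delta_{X_i}$ by its heat regularization $\mu_n^t = P_t\mu_n$ at scale $t=(\log n)^4/n$; by the triangle inequality and the refined contractivity estimate from \cite{AmGlau}, together with the known sharp upper bound \eqref{eq:ast-cube-ref}, the cost of this replacement is $O(\sqrt{\log n\,\log\log n}/n)$, which is negligible at scale $\log n/n$. Then introduce the Neumann-Poisson solution $f_n^t$ with $-\Delta f_n^t = \mu_n^t - 1$ and recall the sharp energy identity $\EE\!\int|\nabla f_n^t|^2 = \log n/(4\pi n) + O(1/n)$ together with the corresponding $L^4$ bound on $\nabla f_n^t$. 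We will also need $L^2$ and $L^4$ bounds on $f_n^t$ itself, which can be obtained by direct time-integration of the variance/fourth-moment bounds for $\mu_n^s - 1$ using ultracontractivity \eqref{eq:UC}, \eqref{eq:stronger-ultra} and Rosenthal's inequality.

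Given a small parameter $\eta\in(0,1)$, choose a smooth cutoff $\chi_\eta$ with $\chi_\eta=1$ on $[\eta,1-\eta]^2$, supported in $[\eta/2,1-\eta/2]^2$, and define $f_n := f_n^t \chi_\eta$. Periodizing on the flat torus, apply the Hopf--Lax construction of \cite[Corollary 3.3]{AST} to obtain a conjugate $g_n$ with $f_n(x)+g_n(y) \le |x-y|^2/2$ and quadratic defect controlled by $e^{\|\Delta f_n\|_\infty}\int|\nabla f_n|^2$. The crucial observation is that the Hopf--Lax formula $g_n(x) = \inf_y\{-f_n(y) + d_{\mathbb{T}^2}(x,y)^2/2\}$ forces $g_n = 0$ on $\partial(0,1)^2$ as soon as $\|f_n\|_\infty \le \eta^2/8$, so that $(f_n,g_n)$ is admissible in the boundary duality \eqref{eq:duality}. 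On the high-probability event $A_n = \{\|\nabla^2 f_n^t\|_\infty \le 1/\log n\}$ (which has probability $1-O(n^{-k})$ for every $k$, by the moment estimates of \cite{AmGlau}), one controls $\|f_n^t\|_\infty$, $\|\nabla f_n^t\|_\infty$ and $\|\Delta f_n\|_\infty$ in terms of $1/(\eta^2\log n)$, so that the smallness hypothesis on $\|f_n\|_\infty$ holds for $n$ large enough.

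Plugging $(f_n,g_n)$ into the dual inequality \eqref{eq:duality}, using $-\Delta f_n^t = \mu_n^t - 1$, yields
$$\tfrac 1 2 \Wb^2(\mu_n^t,1) \ge \bra{1-\tfrac{e^{\omega_\eta(n)}}{2}}\int|\nabla f_n|^2 + \int \nabla f_n\cdot(\nabla f_n^t - \nabla f_n),$$
with $\omega_\eta(n)\to 0$ as $n\to\infty$ for fixed $\eta$. The error between $\nabla f_n$ and $\nabla f_n^t$ is bounded by $\int |\nabla f_n^t|^2(1-\chi_\eta)^2 + \int|f_n^t|^2|\nabla\chi_\eta|^2$, which, by Cauchy--Schwarz and the $L^4$ bound on $\nabla f_n^t$ together with the $L^2$ bound on $f_n^t$, is $O((\log n/n)\eta^{1/2}) + O(\eta^{-2}/n)$. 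Combining the probabilistic estimate on $A_n^c$ with the $L^4$ gradient bound to dispose of the complementary event, taking the expectation and passing to the limit $n\to\infty$ for fixed $\eta$ gives $\liminf \tfrac{n}{\log n}\EE[\Wb^2] \ge 1/(4\pi) - C\eta^{1/4}$; letting $\eta\to 0$ finishes the proof. The main obstacle is precisely this balance: the boundary cutoff must kill $f_n$ on $\partial(0,1)^2$ without destroying too much of the sharp Dirichlet energy, which is why one needs not only the $L^\infty$ smallness of $f_n^t$ (to legitimize Hopf--Lax) but also quantitative $L^2$ and $L^4$ bounds (to absorb the cutoff error at rate $\eta^{1/4}$ independent of $n$).
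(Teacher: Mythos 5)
Your proposal is correct and follows essentially the same route as the paper's proof: heat-kernel regularization at scale $t=(\log n)^4/n$, the Neumann--Poisson potential $f_n^t$ with its $L^2$/$L^4$ bounds, the boundary cutoff $\chi_\eta$ combined with the Hopf--Lax conjugate to produce an admissible pair for the boundary duality, and the high-probability event $\cur{\nor{\nabla^2 f_n^t}_\infty < 1/\log n}$ to justify the smallness hypotheses, with the same $\eta^{1/4}$ error bookkeeping before letting $\eta \to 0$. No substantive differences to report.
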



Next, we deal with the analogue result for the bipartite matching. In fact, we even consider the case of different number of points. 
Let us point out that we only prove a lower bound since this is what we will later use in the proof of Theorem \ref{thm:main-ref-bip}. 
The corresponding upper bound may be obtained as a corollary of that theorem or more elementarily by arguing as in \eqref{eq:upper-bound-proof-bip}.

\begin{proposition}\label{prop:astbip-dirichlet}
 Let $(X_i,Y_i)_{i=1}^\infty$ be i.i.d uniformly distributed in $(0,1)^2$, then there
 exists a rate function\footnote{recall that a rate function is a generic decreasing function $\omega$ such that $\lim_{t\to\infty} \omega(t)=0$.} $\omega$ such that for every $n\le m$,
\[
  \EE\sqa{ \Wb_{(0,1)^2, 2}^2 \bra{\frac 1 n \sum_{i=1}^n \delta_{X_i}, \frac 1 m \sum_{i=1}^m \delta_{Y_i} }} \ge  \frac{\log n}{4 \pi n}\lt(1+ \frac{n}{m} - \omega(n)\rt).
 \]

\end{proposition}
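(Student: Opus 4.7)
I plan to adapt the argument of Proposition \ref{prop:ast-dirichlet}, the key new ingredient being a Dirichlet energy identity that exploits the independence of the two samples. First, I regularize by setting $\mu_n^t = P_t \mu_n$ and $\nu_m^t = P_t \nu_m$ with $t = (\log n)^4/n$, where $\mu_n = \frac1n\sum \delta_{X_i}$ and $\nu_m = \frac1m\sum \delta_{Y_j}$. Using the triangle inequality for $\Wb$, the bound $\Wb \le W$, the refined contractivity \cite[Theorem 5.2]{AmGlau} applied to each sample, and the Ajtai--Koml\'os--Tusn\'ady upper bound (together with the trivial inequality $1/m \le 1/n$), one reduces the problem to proving the claimed lower bound for $\EE\sqa{\Wb^2(\mu_n^t,\nu_m^t)}$ up to an error $o((\log n/n)(1+n/m))$.

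Let $f_{n,m}^t$ solve $-\Delta f_{n,m}^t = \mu_n^t - \nu_m^t$ with null Neumann boundary conditions. The semigroup representation gives
\[
\EE\sqa{\int_{(0,1)^2} |\nabla f_{n,m}^t|^2} = \int_0^\infty \EE\sqa{\int_{(0,1)^2}(\mu_n^{t+s/2} - \nu_m^{t+s/2})^2}\,ds,
\]
and, since the families $(X_i)$ and $(Y_j)$ are independent, the cross term $\EE[\mu_n^r(x)\nu_m^r(x)] = 1$ cancels against the squared means. Combining with $\mathrm{Var}(\mu_n^r)(x) = (p_{2r}(x,x)-1)/n$ and the on-diagonal heat-kernel asymptotic of \cite{AST,AmGlau} yields
\[
\EE\sqa{\int |\nabla f_{n,m}^t|^2} = 2\bra{\frac1n + \frac1m}\int_t^\infty \int (p_{2r}(x,x)-1)\,dx\,dr = \bra{\frac1n + \frac1m}\bra{\frac{\log n}{4\pi} + O(1)}.
\]
Rosenthal's inequality applied to the $n+m$ independent contributions gives bipartite analogues of \eqref{eq:B}--\eqref{eq:C} with the same prefactor $1/n + 1/m$, and an argument identical to \cite{AmGlau} produces the high-probability event $A_n = \cur{\|\nabla^2 f_{n,m}^t\|_\infty < 1/\log n}$, with $\PP(A_n^c) \lesssim n^{-k}$ for every $k$.

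Finally, the dual construction is verbatim Steps 3--4 of Proposition \ref{prop:ast-dirichlet}: the cut-off $f_{n,m} = f_{n,m}^t \chi_\eta$ is extended periodically to $\mathbb{T}^2$, \cite[Corollary 3.3]{AST} produces the Hopf--Lax partner $g_{n,m}$, the cut-off forces $g_{n,m} = 0$ on $\partial (0,1)^2$ as soon as $\|f_{n,m}\|_\infty \le \eta^2/8$ (which holds on $A_n$ for $n$ large by \eqref{eq:infinity}), and plugging the admissible pair into \eqref{eq:duality} together with an integration by parts yields, on $A_n$,
\[
\tfrac12 \Wb^2(\mu_n^t,\nu_m^t) \ge \bra{1 - \tfrac12 e^{\omega_\eta(n)}}\int |\nabla f_{n,m}|^2 + \int \nabla f_{n,m}\cdot (\nabla f_{n,m}^t - \nabla f_{n,m}).
\]
Taking expectations, letting first $n \to \infty$ and then $\eta \to 0$, the Dirichlet energy computed above contributes the leading term $(\log n/(4\pi n))(1+n/m)$ and every remainder, after being divided by this main term, is absorbed in a rate function $\omega$ depending only on $n$. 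The main technical point, where I expect to spend most care, is ensuring that the regularization error from the first step stays $o((\log n/n)(1+n/m))$ uniformly in $m \ge n$: when $m$ is much larger than $n$, the refined contractivity bound must be checked at the scale $t = (\log n)^4/n$, which is far from the natural regularization scale $\sim 1/m$ of $\nu_m$, and one has to verify that the corresponding error is at worst of order $\sqrt{\log n \log\log n}/n$ -- just as for $\mu_n$ -- and hence negligible.
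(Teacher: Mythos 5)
Your proposal is correct and follows essentially the same route as the paper: regularize both empirical measures at the common scale $t=(\log n)^4/n$, reduce to the smoothed measures via refined contractivity (including the delicate check for $\lambda_m$ at a scale much larger than $1/m$, which the paper handles by applying \cite[Theorem 5.2]{AmGlau} with $t=\gamma/m$, $\gamma\gg\log m$), and then rerun Steps 2--4 of Proposition \ref{prop:ast-dirichlet} for the potential solving $-\Delta f^t_{n,m}=\mu^t_n-\lambda^t_m$. Your direct variance computation of $\EE[\int|\nabla f^t_{n,m}|^2]$ (cross term killed by independence) is just an equivalent rephrasing of the paper's orthogonality identity $\EE[\int\nabla f^t_n\cdot\nabla f^t_m]=0$ for $f^t_{n,m}=f^t_n-f^t_m$, so no genuinely different ideas are involved.
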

\begin{proof}
The proof is very similar to the lower bound in the previous Proposition \ref{prop:ast-dirichlet} so we only sketch it.  Define $\mu_n=\frac{1}{n}\sum_{i=1}^n \delta_{X_i}$ and $\lambda_m=\frac{1}{m}\sum_{i=1}^m \delta_{Y_i}$, and write $W$ for $W_{(0,1)^2,2}$ and similarly for $\Wb$. 
  We denote by $\mu^t_n$ and $\lambda^t_m$ the Neumann heat-kernel regularizations of $\mu_n$ and $\lambda_m$. We define $f^t_n$ as the solution to $-\Delta f^t_n=\mu^t_n-1$ and $f^t_m$ the solution of $-\Delta f^t_m= \lambda^t_m-1$. We finally set $f_{n,m}^t=f^t_n-f^t_m$, and choose $t= (\log n)^4/n$. Notice that if $t_m=(\log m)^4/m$ then $t_m\le t$.
 
 Repeating \textit{Step 1} of the proof of Proposition \ref{prop:ast-dirichlet} we have, by the  triangle inequality and $\Wb\le W$, 
  \begin{align*}
 \Wb^2& \bra{\mu^t_n,\lambda^t_m} - \Wb^2(\mu_n,\lambda_m) \\
   &\le\bra{W(\mu^t_n,\mu_n)+\Wb(\mu_n,\lambda_m)+W(\lambda^t_m,\lambda_m)}^2- \Wb^2(\mu_n,\lambda_m)\\
   &\les W^2(\mu^t_n,\mu_n)+W^2(\lambda^t_m,\lambda_m)+W(\mu_n,\lambda_m)\bra{W(\mu^t_n,\mu_n)+W(\lambda^t_m,\lambda_m)}.
  \end{align*}
The refined contractivity estimate \cite[Theorem 5.2]{AmGlau} gives
\[
 \EE\lt[W^2(\mu^t_n,\mu_n)\rt]\les  \frac{\log \log n}{n},
\]
and moreover, since $t=(\log n)^4/n=\gamma/m$ with $\gamma=(\log n)^4(m/n)\gg \log m$, applying again \cite[Theorem 5.2]{AmGlau} we have 
\[
 \EE\lt[W^2(\lambda^t_m,\lambda_m)\rt]\les \frac{\log \gamma}{m}\les \frac{\log \log n + \log \lt(\frac{m}{n}\rt)}{m}\les \frac{\log \log n}{n},
\]
where we used that $m\ge n$ and thus $\frac{n}{m} \log \lt(\frac{m}{n}\rt)$ is bounded. Using also that 
\[
 \EE\lt[W^2(\mu_n,\lambda_m)\rt]\les \EE\lt[W^2(\mu_n,1)\rt]+\EE\lt[W^2(\lambda_m,1)\rt]\les \frac{\log n}{n} + \frac{\log m}{m}\les \frac{\log n}{n}
\]
we find the analogously to \eqref{eq:end-step-1-dirichlet-prop} that  for some constant $C>0$,
\[
 \EE\lt[\Wb^2(\mu_n,\lambda_m)\rt]\ge \EE\lt[\Wb^2(\mu^t_n,\lambda^t_m)\rt] - C \frac{\sqrt{\log n \log \log n}}{n}.
\]
Turning now to \textit{Steps 2-4}, we see that the proof can be repeated verbatim using $f_{n,m}^t$ instead of $f^t_n$ and the triangle inequality to obtain the various estimates from the corresponding ones on $f^t_n$ and $f^t_m$. 
The only additional ingredient is that  using  integration  by parts and independence, we have  the following orthogonality property
\[\begin{split}
\EE\lt[  \int_{(0,1)^2} \nabla f^t_n\cdot \nabla f^t_m\rt] &= - \EE\sqa{ \int_{(0,1)^2}  f^t_n \Delta f^t_m} = -  \int_{(0,1)^2} \EE\sqa{ f^t_n} \EE\sqa{ \lambda^t_m}\\
& = 0.
\end{split} \]
 Therefore, appealing once again to \cite[Lemma 3.14]{AmGlau},
\begin{align*}
 \EE\lt[\int_{(0,1)^2} |\nabla f_{n,m}^{t}|^2\rt]&= \EE\lt[\int_{(0,1)^2} |\nabla f^t_n|^2\rt]+\EE\lt[\int_{(0,1)^2} |\nabla f^t_m|^2\rt]+ 2\EE\lt[\int_{(0,1)^2} \nabla f^t_n\cdot \nabla f^t_m\rt]\\
 &=\EE\lt[\int_{(0,1)^2} |\nabla f^t_n|^2\rt]+\EE\lt[\int_{(0,1)^2} |\nabla f^t_m|^2\rt]\\
 &= \frac{|\log t|}{4\pi n}+\frac{|\log t|}{4\pi m} + O\lt(\frac{1 }{n}\rt)\\
 &=\frac{\log n}{4\pi n}\lt( 1+\frac{n}{m}+ \omega(n)\rt). \qedhere
\end{align*}

\end{proof}

\begin{remark}
Arguing as in \cite[Theorem 5.2]{AST}, we can also provide a lower bound for the Wasserstein distance of order $1$:
\[ \liminf_{n \to \infty} \frac{1}{\sqrt{\log n}} \EE\sqa{ \Wb_{(0,1)^2, 1} \bra{\frac 1 n \sum_{i=1}^n \delta_{X_i}, \frac 1 n \sum_{i=1}^n \delta_{Y_i}}} >0.\]
The proof is exactly as in \cite[Theorem 5.2]{AST}. The only difference lies in the choice of the Lipschitz function  in the dual  description of the Wasserstein distance. The function $\phi$ from \cite[Theorem 5.2]{AST} must be replaced by $\phi \chi_\eta$ where $\chi_\eta$ is defined  in the proof above. 
\end{remark}

\section{Proof of Theorem~\ref{thm:main-ref}}\label{sec:main-ref}

 We recall that for a given Lipschitz and connected domain $\Omega$ and a H\"older continuous and uniformly strictly positive probability density $\rho$ on $\Omega$, we consider
 $(X_i)_{i=1}^\infty$ i.i.d. random variables with common distribution $\rho$. Letting 
 \[
  \mu_n=\frac{1}{n}\sum_{i=1}^n \delta_{X_i},
 \]
 we want to prove that 
 \begin{equation}\label{eq:toproveTh1}
  \lim_{n\to\infty} \frac{n}{\log n}\EE\lt[W_2^2\lt(\mu_n,\rho\rt)\rt]=\frac{|\Omega|}{4\pi}.
 \end{equation}
Without loss of generality, we assume that $|\Omega| = 1$.  We also omit to specify $p=2$ and simply write $W_{Q} = W_{Q,2}$ and $\Wb_Q = \Wb_{Q,2}$ for $Q \subseteq \Omega$. 
Let us introduce some further notation: for $r>0$ we consider $(X_i^r)_{i=1}^\infty$, i.i.d.\ uniformly distributed random variables in $[0,r]^2$. We then define 
the average cost functions 
\[
 F_r(n)= \EE\sqa{ W^2_{[0,r]^2}\bra{ \frac 1 n \sum_{i=1}^n \delta_{X^r_i}, \frac 1 {r^2}}}, \quad Fb_r(n) = \EE\sqa{ \Wb^2_{[0,r]^2}\bra{ \frac 1 n \sum_{i=1}^n \delta_{X^r_i}, \frac 1 {r^2}}}
\]
By scaling, \eqref{eq:ast-cube-ref} and Proposition~\ref{prop:ast-dirichlet},  we have 
\begin{equation}\label{Fr}
 F_r(n)= r^2 F_1(n)\le r^2 \frac{\log n}{4\pi n}\lt(1+\omega(n)\rt), \quad \text{and} \quad Fb_r(n)= r^2 Fb_1(n)\ge r^2 \frac{\log n}{4\pi n}\lt(1-\omega(n)\rt).
\end{equation}
\emph{Step 1 (Whitney decomposition).} 
 We consider $\cur{Q_k}_k$ a Whitney decomposition of $\Omega$, see  \cite[Chapter 6]{stein2016singular}. For $\delta=\delta(n)>0$ to be 
 chosen below we set  $U_\delta=\{ Q_k \ : \ \diam(Q_k)\ge \delta\}$ and $V_\delta=\{Q_k \ : \ \diam(Q_k)<\delta\}$. For $r>0$ dyadic and small enough so that  Lemma~\ref{lem:heat-cube} applies with $r \diam(\Omega)$, we split each $Q_k$ into $r^{-2}$ sub-cubes.
 We let $V_\delta^r$ be the union of these cubes. Up to relabeling we have $U_\delta\cup V_\delta^r=\{\Omega_k\}_k$.  We finally define $\kappa_k= \mu_n(\Omega_k)/\rho(\Omega_k)$.
 \\
 For $\eps>0$, we estimate by the subadditivity inequality \eqref{eq:sub}
 \begin{equation}\label{sublog}
  \EE\lt[W^2_{\Omega}(\mu_n,\rho)\rt]\le (1+\eps) \sum_{k} \EE\lt[ W^2_{\Omega_k}(\mu_n, \kappa_k \rho)\rt] + \frac{C}{\eps} \EE\lt[W^2\lt(\sum_{k}  \kappa_k \rho \chi_{\Omega_k},\rho\rt)\rt]
 \end{equation}
 and the superadditivity inequality \eqref{eq:super-proof}
 \begin{equation}\label{suplog}
 \EE\lt[\Wb^2_{\Omega}(\mu_n,\rho)\rt]\ge  (1-\eps)\sum_{k}\EE\lt[ \Wb^2_{\Omega_k}(\mu_n,  \kappa_k \rho)\rt] - 
\frac{C}{\eps} \EE\lt[ \Wb^2_{\Omega}\lt(\sum_{k}  \kappa_k \rho \chi_{\Omega_k},\rho\rt)\rt].
\end{equation}
In both expressions,  we recognize in the right-hand sides a sum of ``local'' contributions and an additional  ``global'' term. We consider these contributions separately in the next two steps and  show in particular that the global term does not contribute in the limit.\\

\emph{Step 2 (Local term).} 
We let $N_{k} = n \mu_n(\Omega_k)$ be the number of points $X_i$ which belong to the cube $\Omega_k$. This is a random variable with binomial law and parameters $(n, \rho(\Omega_k))$.  
We decompose the sum in the right-hand side of \eqref{sublog} as
\[
 \sum_{k}  \EE\lt[ W^2_{\Omega_k}(\mu_n, \kappa_k \rho)\rt]
  = \sum_{U_\delta} \EE\lt[ W^2_{\Omega_k}(\mu_n, \kappa_k \rho)\rt]+\sum_{V_\delta^r} \EE\lt[ W^2_{\Omega_k}(\mu_n, \kappa_k\rho)\rt].
 \]
For the first term we use the naive estimate
\[  \EE\lt[ W^2_{\Omega_k}(\mu_n, \kappa_k \rho)\rt]\les \diam^2(\Omega_k) \EE\sqa{\mu_n(\Omega_k)}\le \delta^2 \rho(\Omega_k),\]
 to get
\begin{equation}\label{eq:NBr1}
  \sum_{U_\delta} \EE\lt[ W^2_{\Omega_k}(\mu_n, \kappa_k \rho)\rt]\les  
 \delta^2 \sum_{U_\delta}  \rho(\Omega_k) \les \delta^2\abs{ d(\cdot, \partial \Omega)\les \delta } \les \delta^3.
\end{equation}
As for the second term, we use \eqref{eq:holder-cube} from Lemma~\ref{lem:heat-cube} to infer that for every $\Omega_k\in V_\delta^r$,
\[
\begin{split}
 \EE\lt[ W^2_{\Omega_k}(\mu_n, \kappa_k \rho)\rt]& = \EE\lt[   \EE\lt[  W^2_{\Omega_k}(\mu_n, \kappa_k \rho) | N_{k} \rt]\rt]\\
 & \le (1+C r^\alpha)\EE\lt[\frac{N_{k}}{n} F_{\sqrt{|\Omega_k|}}(N_{k})\rt]\\
 &\stackrel{\eqref{Fr}}{\le}(1+Cr^\alpha) \frac{|\Omega_k|}{ 4\pi n} \EE\lt[\log(N_{k})(1+\omega(N_{k}))\rt].
\end{split}
\]
Using the concentration properties of the binomial random variable $N_k$ and the fact that  $\delta = \delta(n)$ satisfies $\lim_{n \to \infty} n \delta^2 =\infty$, we see that 
\begin{equation}\label{eq:FNk}
 \EE\lt[\log(N_{k})(1+\omega(N_{k}))\rt]= \log (n\rho(\Omega_k)) (1+\omega( n r^2\delta^2))
\end{equation}
 and we find
\begin{equation}\label{eq:NBr}
  \EE\lt[ W^2_{\Omega_k}(\mu_n, \kappa_k \rho)\rt]\le (1+C r^\alpha) \frac{\log (n\rho(\Omega_k))}{ 4\pi n} |\Omega_k| (1+\omega(nr^2\delta^2)). 
\end{equation}
We now claim that 
\begin{equation}\label{claim:sumOm}
 \sum_{k} |\Omega_k| |\log |\Omega_k||\les |\log r|.
\end{equation}
Indeed, 
 \begin{multline*}
  \sum_{k} |\Omega_k| |\log |\Omega_k||= \sum_{U_\delta} |\Omega_k| |\log |\Omega_k||+\sum_{V_{\delta}^r} |\Omega_k| |\log |\Omega_k||\\
  =\sum_{U_\delta} |Q_k| |\log |Q_k||+\sum_{V_{\delta}} |Q_k| |\log |Q_k| +\log r|\\
  \les \sum_k  |Q_k| |\log |Q_k|| +\sum_{V_\delta} |Q_k| |\log r|\les 1+|\log r|\les |\log r|,
 \end{multline*}
where we used  that  (notice that the finiteness of the integral below is ensured, for instance,
by the finiteness of the Minkowski content of $\Omega$, in turn ensured by Lipschitz regularity)
\[ \sum_{k} |Q_k|  |\log |Q_k|| \les \int_{\Omega} |\log d( \cdot , \partial \Omega)|  < \infty.\]
Thus, from \eqref{eq:NBr} and \eqref{claim:sumOm},
\begin{equation}\label{firstterm}
\sum_{k} \EE\lt[ W^2_{\Omega_k}(\mu_n, \kappa_k \rho)\rt]\le  (1+C r^\alpha) \frac{\log n}{ n}  \frac{1}{4\pi} + C\lt( \frac{|\log r|}{n}+\frac{\log n}{ n}\omega(n r^2\delta^2) +  \delta^3\rt).
\end{equation}
For the analogue term in \eqref{suplog}, we simply discard the terms with $\Omega_k\in U_\delta$ and for the remaining ones use again \eqref{eq:holder-cube} 
from Lemma~\ref{lem:heat-cube} and the function $Fb_r$ instead of $F_r$ to obtain 
 \begin{equation*}\label{firsttermb}
\sum_{k} \EE\lt[ \Wb^2_{\Omega_k}(\mu_n, \kappa_k \rho)\rt]\ge  (1-C r^\alpha) \frac{\log n}{ n}  \frac{1- C\delta}{4\pi} - C\lt( \frac{|\log r|}{n} +\frac{\log n}{ n}\omega (n r^2 \delta^2)\rt).
\end{equation*}

\emph{Step 3 (Global term).} We now turn to the second term in the right-hand side of \eqref{sublog}, for which we show that
\begin{equation}\label{eq:global} 
\EE\sqa{ W^2_{\Omega}\bra{\sum_{k}  \kappa_k \rho \chi_{\Omega_k},\rho} } \les \frac{ |\log r|}{n}.
\end{equation}
Notice that the inequality $\Wb \le W$ gives an inequality also for the corresponding term in  \eqref{suplog}.\\
We first use Lemma~\ref{lem:peyre} to deduce 
\begin{multline*}
 W^2_{\Omega}\bra{\sum_{k}  \kappa_k \rho \chi_{\Omega_k},\rho}  \les \lt\|\sum_{k}  (\kappa_k-1) \rho \chi_{\Omega_k}\rt\|_{H^{-1}}^2 \\
 =\lt\|\sum_{k}  (\kappa_k-1) \lt(\rho \chi_{\Omega_k}-\rho(\Omega_k)\chi_{\Omega}\rt)\rt\|_{H^{-1}}^2 =\lt\|\sum_{k}  (\kappa_k-1)  f_{k} \rt\|_{H^{-1}}^2,
\end{multline*}
where 
\[ f_k = \rho \chi_{\Omega_k}-\rho(\Omega_k)\chi_{\Omega}.\]
 Taking expectation and expanding the squares we have 
\[
     \lt\|\sum_{k}  (\kappa_k-1)  f_{k} \rt\|_{H^{-1}}^2
    = \sum_{k} \EE[(\kappa_k-1)^2] \|f_k\|_{H^{-1}}^2 + \sum_{j\neq k} \EE[ (\kappa_j-1)(\kappa_k-1)]\langle f_j,f_k\rangle_{H^{-1}}.
\]
For the first sum, we use that
\[ \EE[(\kappa_k-1)^2] = \frac{1}{\rho^2(\Omega_k)} \EE\sqa{ \bra{ \mu_n(\Omega_k) - \rho(\Omega_k)}^2} \le \frac{1}{n \rho(\Omega_k)} \les \frac{1}{n|\Omega_k|}\]
 and, by Lemma~\ref{lem:Sob} with $f=f_k$, since $\nor{ f_k}_\infty \les 1$  and $\nor{ f_k}_1 \sim \rho(\Omega_k)$,
\begin{equation}\label{eq:Sobinter}
 \nor{f_k}_{H^{-1}}^2 \les \rho(\Omega_k)^2 |\log \rho( \Omega_k)| \les  |\Omega_k|^2 \abs{ \log |\Omega_k| }
\end{equation}
to get
\[
 \sum_{k} \EE[(\kappa_k-1)^2] \|f_k\|_{H^{-1}}^2 \les 
 \frac{1}{n} \sum_{k} |\Omega_k| |\log |\Omega_k||
 \les \frac{1}{n} \sum_{k} |\Omega_k| |\log |\Omega_k||\stackrel{\eqref{claim:sumOm}}{\les}\frac{|\log r|}{n}.\]

For the second sum, we use that if $j\neq k$,  
\[
 \EE[ (\kappa_j-1)(\kappa_k-1)]= \frac{\EE\sqa{\mu_n(\Omega_j)\mu_n(\Omega_k)}}{\rho(\Omega_j)\rho(\Omega_k)}-1= -\frac{1}{n}
\]
together with \eqref{eq:Sobinter} and Cauchy-Schwarz inequality to conclude 
\[
\begin{split}  \sum_{j\neq k}  \EE[ (\kappa_j-1)(\kappa_k-1)]\langle f_j,f_k\rangle_{H^{-1}}& \les \frac 1 n \sum_{j\neq k} \nor{ f_j}_{H^{-1}}  \nor{ f_k}_{H^{-1}} \\
&\les \frac{1}{n} \sum_{j\neq k}|\Omega_j| |\log |\Omega_j||^{1/2} |\Omega_k| |\log |\Omega_k||^{1/2}\\
&\les \frac{1}{n}\lt(\sum_{k}|\Omega_k| |\log |\Omega_k||^{1/2}\rt)^2\\
&\les \frac{|\log r|}{n},
\end{split}
\]
where we argued as above to bound $\sum_{k}|\Omega_k| |\log |\Omega_k||^{1/2} \les |\log r|^{1/2}$. This proves \eqref{eq:global}. \\

\emph{Step 4 (Conclusion).} Putting together \eqref{firstterm} and \eqref{eq:global}, we conclude that
\[
 \EE[W^2(\mu_n,\rho)]\le (1+\eps)(1+C r^\alpha) \frac{\log n}{ n} \frac{1}{4\pi} + C \lt(\frac{\log n}{n} \omega(n r^2\delta^2) +  \delta^3 +\frac{1}{\eps}  \frac{|\log r|}{ n}\rt)
\]
and similarly
\[
 \EE[\Wb^2(\mu_n,\rho)]\ge (1-\eps)(1-C r^\alpha) \frac{\log n}{ n} \frac{1-C\delta}{4\pi} - \lt( \frac{\log n}{n} \omega (n r^2 \delta^2) +\frac{1}{\eps}  \frac{|\log r|}{ n}\rt).
\]
We now choose $\delta = \delta(n) = n^{-\beta}$ with $\beta \in (\frac 1 3, \frac 1 2)$, so that the condition $\lim_{n\to \infty} n \delta^2  = \infty$ holds but also $\lim_{n\to \infty} n\delta^3/\log n  =0$. This yields
\begin{multline*}
 (1+\eps)(1+C r^\alpha) \frac{1}{4\pi}\ge  \limsup_{n\to \infty}\frac{n}{\log n}\EE[W^2(\mu_n,\rho)]\ge \liminf_{n\to \infty}\frac{n}{\log n}\EE[\Wb^2(\mu_n,\rho)]\\
 \ge (1-\eps)(1-C r^\alpha) \frac{1}{4\pi}.
\end{multline*}
Letting finally $r \to 0$ and $\eps\to 0$ we conclude the proof of \eqref{eq:toproveTh1}.

 \section{Proof of Theorem~\ref{thm:main-ref-bip}}\label{sec:main-bip}

For $(X_i,Y_i)_{i=1}^\infty$ i.i.d.\ random variables with common distribution $\rho$ in $\Omega$, we write 
\[
 \mu_n=\frac{1}{n}\sum_{i=1}^n \delta_{X_i} \qquad \textrm{and} \qquad \lambda_m =\frac{1}{m}\sum_{j=1}^m \delta_{Y_j}.
\]
We now prove that 
\begin{equation*}\label{eq:toproveTh2}
 \lim_{\substack{n,m \to \infty \\ m/n \to q}} \frac{n}{\log n} \mathbb{E} \sqa{ W_2^2\bra{ \mu_n, \lambda_m} } = \frac {|\Omega|} {4 \pi}\bra{ 1 + \frac{1}{q}}.
\end{equation*}
As above, we may assume by scaling that $|\Omega|=1$ and we will omit to specify $p=2$, simply writing $W_{Q} = W_{Q,2}$ and $\Wb_Q = \Wb_{Q,2}$ for $Q \subseteq \Omega$. 
For $n \le m$, we will mostly focus on  the lower bound, 
\begin{equation}\label{eq:lower-bound-proof-bip}  \EE\sqa{\Wb^2_\Omega(\mu_n, \lambda_m)}  \ge   \frac{\log n}{4\pi n } \lt(1+ \frac{n}{m} - \omega(n)\rt). \end{equation}
  Indeed, we first show how the upper bound,
\begin{equation}\label{eq:upper-bound-proof-bip} \EE\sqa{W^2_\Omega(\mu_n, \lambda_m)} \le  \frac{\log n}{4\pi n } \lt(1+ \frac{n}{m} +\omega(n)\rt),\end{equation}
can be then quickly obtained as a consequence of \cite[Proposition 4.9]{AST}\footnote{in fact the statement there wrongly contains an equality that should be instead an inequality $\le$}. Letting $T^{\mu_n}$
(respectively $T^{\lambda_m}$) be the optimal transport maps between $\rho$ and $\mu_n$ (respectively $\lambda_m$), by independence we have 
\begin{equation} \begin{split}\label{eq:doubling-trick} \EE\sqa{ W^2_\Omega \bra{\mu_n, \lambda_m }}&\le \EE\sqa{ \int_{\Omega}|T^{\mu_n}-T^{ \lambda_m}|^2\rho }\\
  &=  \EE\sqa{ W^2_\Omega \bra{\mu_n, \rho }}+ \EE\sqa{ W^2 _\Omega\bra{ \lambda_m,\rho }}\\
  & \quad- 2\int_{\Omega}\EE\sqa{(T^{\mu_n}-x)}\cdot \EE\sqa{(T^{\lambda_m}-x)}\rho.
\end{split} \end{equation}
 We start with the case $n=m$. In this case $\mu_n$ and  $\lambda_m$ have the same law, hence the last term above becomes
 \[ \int_{\Omega}\EE\sqa{(T^{\mu_n}-x)}\cdot \EE\sqa{(T^{\lambda_m}-x)}\rho= \int_{(0,1)^2}\abs{ \EE\sqa{(T^{\mu_n}-x)}}^2\rho.\]
By \eqref{eq:lower-bound-proof-bip}  and Theorem~\ref{thm:main-ref} (recall \eqref{eq:toproveTh1}) we get 
 \[ \limsup_{n \to \infty} \frac{n}{\log n}  \int_{\Omega}\abs{ \EE\sqa{(T^{\mu_n}-x)}}^2 \rho \le \lim_{n \to \infty}  \frac{n}{\log n}\lt(\EE\sqa{ W^2_\Omega \bra{\mu_n, \rho }} - \frac 1 2 \EE\sqa{ W^2 _\Omega\bra{\mu_n, \lambda_n }} \rt)= 0.\]
 Therefore, 
 \[ \EE\sqa{ W^2_\Omega \bra{\mu_n, \rho }} +  \int_{\Omega}\abs{ \EE\sqa{(T^{\mu_n}-x)}}^2\rho  \le  \frac{\log n}{4 \pi n} \bra{ 1 +\omega(n)}. \]
 We now turn to the general case $n \le  m$. Using  the inequality $-2ab \le a^2 + b^2$ for the last term in \eqref{eq:doubling-trick}, we obtain (here we use that $\omega(m)\le \omega(n)$ and $\log m= \log \frac{m}{n}+ \log n$)
\begin{align*}
  \EE\sqa{ W^2 _\Omega\bra{\mu_n, \lambda_m }}&\le \frac{\log n}{4\pi n } (1+\omega(n))+ \frac{\log m}{4\pi m } (1+\omega(m))\\
  &= \frac{\log n}{4\pi n } \lt(1+ \frac{n}{m} + \lt(\frac{\log m}{\log n}-1\rt)\frac{n}{m} + \frac{n}{m} \frac{\log m}{\log n} \omega(m)+ \omega(n)\rt)\\
  &\le \frac{\log n}{4\pi n } \lt(1+ \frac{n}{m} + \frac{1}{\log n} \frac{ \log \frac{m}{n}}{\frac{m}{n}}\lt(1+\omega(n)\rt) + \lt(\frac{n}{m} +1\rt)\omega(n)\rt)\\
  &\le \frac{\log n}{4\pi n } \lt(1+ \frac{n}{m} +\omega(n)\rt).
  \end{align*}
 This proves \eqref{eq:upper-bound-proof-bip}.\\
  
We thus focus  on the proof of \eqref{eq:lower-bound-proof-bip}. For this we follow the same steps as  in Theorem~\ref{thm:main-ref}.
However, we need to be more careful when estimating the ``global'' term. Indeed, we cannot apply directly Lemma~\ref{lem:peyre} since 
the measure $\lambda_m$ is singular. To fix this issue, we  first  slightly modify the Whitney decomposition to avoid cubes whose measure is too small. 
This guarantees that with overwhelming probability, every element of the partition contains many points so that  we may argue as in \cite[Proposition 5.2]{GolTre}.
Notice that of course we could have used Lemma \ref{lem:decomp} also in the proof of Theorem \ref{thm:main-ref}.

To simplify the exposition, we postpone the proof of the following geometric result to Appendix~\ref{app:proof-lemma-dec}.


\begin{lemma}\label{lem:decomp}
Let $\Omega\subset \R^d$ be a bounded open set with Lipschitz boundary, let $\{Q_k\}_k$ be a Whitney decomposition of $\Omega$. For every $\delta>0$
small enough (depending on $\Omega$), letting $V_\delta=\{Q_k \ : \ \diam(Q_k) \ge \delta\}$, there exists a family $U_\delta=\{\Omega_k\}_k$ of disjoint open sets such that
$\diam(\Omega_k)\les \delta$, $|\Omega_k|\sim \delta^d$ and $U_\delta\cup V_\delta$ is a partition of $\Omega$.
\end{lemma}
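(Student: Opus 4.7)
The plan is to show that the ``residual region'' $R := \Omega \setminus \bigcup_{Q \in V_\delta} Q$ is concentrated in a tubular neighborhood of $\partial \Omega$ of width $\sim \delta$, and to decompose $R$ into pieces of diameter $\sim \delta$ and volume $\sim \delta^d$ using Lipschitz charts for $\partial \Omega$.

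First I would recall the defining property of the Whitney decomposition: every $Q_k$ satisfies $\diam(Q_k) \sim d(Q_k, \partial \Omega)$. Consequently, if $Q_k$ has $\diam(Q_k) < \delta$, then the whole cube lies within distance $\lesssim \delta$ of $\partial \Omega$, and conversely every point of $\Omega$ within distance $\lesssim \delta$ of $\partial \Omega$ belongs to some cube of diameter $<\delta$. Hence $R$ is sandwiched between $\{x \in \Omega : d(x,\partial\Omega) < c_1\delta\}$ and $\{x \in \Omega : d(x,\partial\Omega) < c_2\delta\}$ for constants $0 < c_1 < c_2$ depending only on $\Omega$.

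Next, since $\partial \Omega$ is Lipschitz, one can cover a neighborhood of $\partial \Omega$ by finitely many open cylinders $C^{(1)}, \dots, C^{(N)}$ in each of which, after a rigid motion, $\Omega$ is of the form $\{x_d > \varphi^{(i)}(x_1, \dots, x_{d-1})\}$ for a Lipschitz function $\varphi^{(i)}$. Choose a subordinated Borel partition $(A^{(i)})_{i=1}^N$ of a neighborhood of $\partial \Omega$ with $A^{(i)} \subset C^{(i)}$. For $\delta$ small enough, $R \subset \bigcup_i A^{(i)}$. In each chart, I would tile $A^{(i)} \cap R$ by ``bricks'' of the form $\prod_{j=1}^{d-1}(k_j \delta, (k_j+1)\delta) \times (\text{vertical slab of thickness } \sim \delta)$, where the vertical slab is chosen so that the Lipschitz graph passes through the brick. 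Because the graph is Lipschitz and the horizontal side is $\delta$, the volume of such a brick intersected with $\Omega \cap R$ is comparable to $\delta^d$, and its diameter is $\lesssim \delta$.

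The main technical nuisance, which I expect to be the real obstacle, is cleaning up the construction so that the resulting sets are disjoint, form a partition together with $V_\delta$, and \emph{each} satisfies the volume lower bound $|\Omega_k| \gtrsim \delta^d$ (the upper bound and the diameter bound being automatic). Three kinds of problematic fragments can appear: (a) pieces at the interface between different charts $A^{(i)}$; (b) pieces at the interface between $R$ and the cubes of $V_\delta$; (c) pieces near the upper boundary of the vertical slab in each brick, where a cube of $V_\delta$ may cut off a thin slice. I would handle all three by a standard greedy merging: whenever a candidate piece has volume $< c \delta^d$ for a small enough constant $c$, absorb it into an adjacent candidate piece (or adjacent Whitney cube of $V_\delta$, provided that cube still has size $\sim \delta$; otherwise another adjacent piece). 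Since each candidate already has diameter $\lesssim \delta$, merging a bounded number of them keeps the diameter $\lesssim \delta$, and the merging can absorb at most a constant number of bad neighbors by a volume-packing argument. After this cleanup, the collection $U_\delta = \{\Omega_k\}_k$ consists of disjoint open sets (take interiors of the merged regions) with the required properties, and $U_\delta \cup V_\delta$ covers $\Omega$ up to a Lebesgue negligible set, as required by our convention on partitions.
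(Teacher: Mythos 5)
Your strategy is sound and it reaches the statement, but it is genuinely different from the paper's construction, and it leaves the cleanup step (which you yourself flag as the real obstacle) only sketched. The paper avoids charts and tilings altogether: it takes a $\delta$-net $\{x_k\}$ of $\partial\Omega$ and defines $\widetilde\Omega_k$ as the Voronoi cell of $x_k$ intersected with the layer $A_\delta=\{d(\cdot,\Omega^c)<\sqrt d\,\delta\}$; disjointness is then automatic, the diameter bound is a triangle inequality, and the volume lower bound follows in one line from $B_{\delta/4}(x_k)\cap\Omega\subseteq\widetilde\Omega_k$ together with the measure-density property of Lipschitz domains. The remaining Whitney cubes are not re-tiled: small cubes at distance $>\sqrt d\,\delta$ from $\Omega^c$ are kept as they are (they automatically have diameter between $c\delta$ and $\delta$), and the cubes straddling $\partial A_\delta$ are simply attached, each to one Voronoi cell it meets, the number of such attachments per cell being bounded. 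So the paper gets disjointness and the volume lower bound for free and never faces your fragment types (a)--(c), at the price of producing less geometrically explicit cells; your brick-in-chart construction is more concrete but pushes all the difficulty into the greedy merging. That merging can indeed be made rigorous, but be careful with the justification you give: the ``volume-packing argument'' cannot bound the number of bad neighbours by their own volume, since the bad fragments are precisely the pieces of small volume; the correct count comes from packing the \emph{parent bricks} (disjoint boxes of volume $\sim\delta^d$ within each chart) and the finitely many charts meeting a ball of radius $C\delta$, which bounds the number of fragments adjacent to any given good piece by a constant depending only on $d$ and the Lipschitz constant of $\partial\Omega$. Also, choose the chart partition $(A^{(i)})_i$ with negligible interfaces (e.g.\ polyhedral up to null sets), so that taking interiors at the end loses only a Lebesgue-null set, consistently with the paper's convention on partitions.
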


\emph{Step 1 (Whitney-type decomposition and reduction to a good event).} For  $\delta = \delta(n)>0$ to be specified below, let  $U_\delta$ and $V_\delta$ be given by Lemma \ref{lem:decomp}. For $r>0$ dyadic we divide each cube
$Q_k\in V_\delta$ in $r^{-2}$ equal sub-cubes  of sidelength $r\ell(Q_k)$. We let $V_\delta^r$ be their collection and relabel $U_\delta\cup V_\delta^r=\{\Omega_k\}_k$.
 We set  
\begin{equation}\label{choicetheta}
 \theta=\frac{1}{\sqrt{ \log  n}}
\end{equation}
and for every $k$ such that $\lambda_m(\Omega_k)\neq 0$
\[
 \kappa_k= \frac{\mu_n(\Omega_k)}{\lambda_m(\Omega_k)}.
\]
If instead $\lambda_m(\Omega_k)= 0$ we arbitrarily set $\kappa_k=0$. Define the event
\[
 A=\lt\{ \forall \Omega_k \in V_\delta^r\cup U_\delta, \ \lt|1-\frac{\mu_n(\Omega_k)}{\rho(\Omega_k)}\rt|+ \lt|1-\frac{\lambda_m(\Omega_k)}{\rho(\Omega_k)}\rt|\le \frac{ \theta}{4}\rt\}.
\]
Notice that if $|1-\frac{\mu_n(\Omega_k)}{\rho(\Omega_k)}|+ |1-\frac{\lambda_m(\Omega_k)}{\rho(\Omega_k)}|\le \frac{ \theta}{4}$ then 
\begin{equation}\label{claim:likely}
 |1-\kappa_k|\le \frac{\theta}{2}.
\end{equation}
We claim that if  
\[
 \delta= \frac{1}{n^\beta}, \quad \text{for some $\beta\in (0,1/2)$,} \quad  r \sim \frac{1}{\log(n)},
\]
then 
\begin{equation}\label{claim:smallprob}
 \PP( A^c) \les \exp\lt(-c \frac{n^{1-2\beta}}{(\log n)^3}\rt),
\end{equation}
so that we can restrict ourselves to the event $A$ in the following steps.
By  a union bound, to prove the claim, we bound 
\[
 \PP(A^c)
 \le \sum_k \PP\bra{ \lt|1-\frac{\mu_n(\Omega_k)}{\rho(\Omega_k)}\rt|\ge \theta} +\sum_k\PP\bra{ \lt|1-\frac{\lambda_m(\Omega_k)}{\rho(\Omega_k)}\rt|\ge \theta}.
\]
We focus only on the terms involving $\mu_n$ (those with $\lambda_m$ are analogous, recalling that $n \le m$). For every $k$, the number of points $n \mu_n(\Omega_k)$ is a binomial random variable with parameters $(n, \rho(\Omega_k))$.
Hence, by Chernoff bounds
\[\PP\bra{ \lt|1-\frac{\mu_n(\Omega_k)}{\rho(\Omega_k)}\rt|\ge \theta} \le \exp\lt(- \frac{n \rho(\Omega_k)}{2\log n}\rt).\]
Summing this over $k$ and using that by definition of a Whitney partition, for every $j\in \N$ such that $2^{-j}\ge \delta$,
\[\#\{ \Omega_k\in V^r_\delta \ : \  \ell(\Omega_k)\in r (2^{-(j+1)}, 2^{-j}]\}\les r^{-2} 2^j,\] we find
\begin{align*}
 \sum_k \PP\bra{ \lt|1-\frac{\mu_n(\Omega_k)}{\rho(\Omega_k)}\rt|\ge \theta}&\le \sum_{ V^r_\delta} \exp\lt(- \frac{n \rho(\Omega_k)}{2\log n}\rt)+\sum_{ U_\delta} \exp\lt(- \frac{n \rho(\Omega_k)}{2\log n}\rt)\\
 &\les \sum_{2^{-j}\ge \delta} r^{-2} 2^j \exp\lt(-c \frac{n r^2 2^{-2j}}{\log n}\rt) +\delta^{-1} \exp\lt(-c\frac{n \delta^2}{\log n}\rt)\\
 &\les r^{-2} \delta^{-1}\exp\lt(-c \frac{nr^2 \delta^2}{\log n}\rt)+ \delta^{-1}\exp\lt(-c\frac{n \delta^2}{\log n}\rt)\\
 &\les r^{-2} \delta^{-1}\exp\lt(-c \frac{nr^2 \delta^2}{\log n}\rt).
\end{align*}
Up to replacing the constant $c>0$ with a smaller one, to control the terms  $r^{-2}\delta^{-1} \les n^{\beta} (\log n)^2$, 
we obtain  \eqref{claim:smallprob}.
We end this step applying \eqref{eq:mainsup} and using $\Wb\le W$ to get for every $\eps>0$, 
\begin{multline*}
 \Wb^2_\Omega (\mu_n,\lambda_m)\ge (1-\eps)\sum_{k} \Wb^2_{\Omega_k}(\mu_n,\kappa_k \lambda_m)- \frac{C}{\eps} W_{\Omega}^2\lt(\sum_{k} \kappa_k \chi_{\Omega_k}\lambda_m,\lambda_m\rt)\\
 \ge  (1-\eps)\sum_{V_\delta^r} \Wb^2_{\Omega_k}(\mu_n,\kappa_k \lambda_m)- \frac{C}{\eps} W_{\Omega}^2\lt(\sum_{k} \kappa_k \chi_{\Omega_k}\lambda_m,\lambda_m\rt).
\end{multline*}
\emph{Step 2 (Local term).} For $r>0$ let $(X_i^r,Y_i^r)$ be i.i.d uniformly distributed random variables in $[0,r]^2$ and for $n\le m$ let 
\[
 Fb_r(n,m)=\EE\lt[\Wb_{[0,r]^2}^2\lt(\frac{1}{n}\sum_{i=1}^n \delta_{X_i^r}, \frac{1}{m}\sum_{i=1}^m \delta_{Y_i^r}\rt)\rt]
\]
so that by scaling and Proposition \ref{prop:astbip-dirichlet},
\[
 Fb_r(n,m)=r^2 Fb_1(n,m)\ge r^2 \frac{\log n}{4\pi n}\lt(1+ \frac{n}{m} -\omega(n)\rt).
\]
Define $N_k= n\mu_n(\Omega_k)$ (respectively $M_k=m\lambda_m(\Omega_k)$) to  be the number of points $X_i$ (respectively $Y_i$) in $\Omega_k$ so that in particular $\kappa_k= N_k/M_k$. 
For every given $k$, 
the random variables $N_k$ and $M_k$ are independent binomial random variables with parameters respectively $(n,\rho(\Omega_k))$ and $(m, \rho(\Omega_k))$. We then define the random variables $N^*_k=\min(N_k,M_k)$ and $M^*_k=\max(N_k,M_k)$.

For every fixed $\Omega_k\in V_\delta^r$, recalling that $\Omega_k$ is a cube  of sidelength at most of order $r$, and using \eqref{eq:holder-cube-mn}, we have 
\begin{align*}
 \EE\lt[\Wb^2_{\Omega_k}(\mu_n,\kappa_k \lambda_m)\rt] &\ge (1-C r^\alpha)\EE\lt[\frac{N^*_k}{n}Fb_{\sqrt{|\Omega_k|}}(N^*_k,M^*_k)\rt]\\
 &\ge (1-C r^\alpha)\frac{1}{4\pi n}|\Omega_k|\EE\lt[\log N^*_k \lt(1+ \frac{N^*_k}{M^*_k} -\omega(N^*_k)\rt) \rt]\\
 &\ge (1-C r^\alpha)\frac{ \log (n\rho(\Omega_k))}{4\pi n} |\Omega_k| \lt(1+ \frac{n}{m}- \omega(n r^2 \delta^2)\rt),
\end{align*}
 where in the last line one can argue as for \eqref{eq:FNk}. Summing over $k$  and using \eqref{claim:sumOm} we find 
\begin{equation}\label{local-bi}
 \EE\lt[\sum_{V_\delta^r} \Wb^2_{\Omega_k}(\mu_n,\kappa_k \lambda_m)\rt]\ge (1-C r^\alpha)\frac{ \log n}{ n} \frac{1- C \delta}{4\pi}\lt(1+\frac{n}{m}-\omega(n r^2 \delta^2)\rt)- C\frac{|\log r|}{n}.
\end{equation}

\emph{Step 3 (Global term).} We claim that for $\delta=n^{-\beta}$ with $\beta\in \lt(\frac{1}{3},\frac{1}{2}\rt)$,
\begin{equation}\label{toprovebipglobal}
 \frac{n}{\log n} \EE\lt[W_{\Omega}^2\lt(\sum_{k} \kappa_k \chi_{\Omega_k}\lambda_m,\lambda_m\rt) \chi_A \rt]\les  \frac{\log \log n}{\sqrt{ \log n}}.
\end{equation}
For this we argue along the lines of \cite[Proposition 5.2]{GolTre}. We define
\[
 \theta_k= 1-\kappa_k+\theta
\]
and recall that if \eqref{claim:likely} holds (which is the case if $A$ occurs) then $\frac{3}{2} \theta\ge \theta_k\ge \frac{1}{2}\theta$. We now use triangle inequality to write
\[\begin{split}
  & W_{\Omega}^2\lt(\sum_k \kappa_k \chi_{\Omega_k} \lambda_m, \lambda_m\rt) \\
  & \quad \les W_{\Omega}^2\lt(\sum_k \kappa_k \chi_{\Omega_k} \lambda_m, \sum_k\lt[(1-\theta_k)\lambda_m +\theta \frac{\lambda_m(\Omega_k)}{\rho(\Omega_k)}\rho\rt] \chi_{\Omega_k}\rt)\\
  & \quad + W_{\Omega}^2\lt( \sum_k \lt[(1-\theta_k)\lambda_m +\theta \frac{\lambda_m(\Omega_k)}{\rho(\Omega_k)}\rho\rt] \chi_{\Omega_k}, \sum_k \lt[(1-\theta_k)\lambda_m +\theta_k \frac{\lambda_m(\Omega_k)}{\rho(\Omega_k)}\rho\rt] \chi_{\Omega_k} \rt)\\
  &\quad + W_{\Omega}^2\lt( \sum_k \lt[(1- \theta_k)\lambda_m +\theta_k \frac{\lambda_m(\Omega_k)}{\rho(\Omega_k)}\rho\rt] \chi_{\Omega_k},\lambda_m \rt).
\end{split}\]
The first and last terms are estimated in a similar way so we only estimate the first one. We use the fact that $1-\theta_k = \kappa_k-\theta$ and subadditivity \eqref{eq:sub} to bound from above
\[\begin{split}
  W_{\Omega}^2& \lt(\sum_k \kappa_k \chi_{\Omega_k} \lambda_m, \sum_k  \lt[(\kappa_k-\theta)\lambda_m +\theta \frac{\lambda_m(\Omega_k)}{\rho(\Omega_k)}\rho\rt] \chi_{\Omega_k}\rt)
 \\
 &\quad  \le \sum_k W_{\Omega_k}^2\lt( \bra{ (\kappa_k-\theta) +\theta
  } \lambda_m,(\kappa_k-\theta)\lambda_m + \theta \frac{ \lambda_m(\Omega_k)}{\rho(\Omega_k)}\rho\rt)\\
 &\quad \stackrel{\eqref{eq:sub}}{\le} \theta \sum_k W_{\Omega_k}^2\lt( 
   \lambda_m,  \frac{ \lambda_m(\Omega_k)}{\rho(\Omega_k)}\rho\rt).
 \end{split}
\]
By \eqref{eq:NBr1} and  \eqref{eq:NBr}, we get  
\[
 \EE\lt[W_{\Omega}^2\lt(\sum_k \kappa_k \chi_{\Omega_k} \lambda_m, \sum_k \lt[(\kappa_k-\theta)\lambda_m +\theta \frac{\lambda_m(\Omega_k)}{\rho(\Omega_k)}\rho\rt] \chi_{\Omega_k}\rt]\rt)\les \theta \lt( \frac{\log m }{m} +\delta^3\rt).
\]
For the middle term, we use again subadditivity together with Lemma~\ref{lem:peyre} and the fact  that $\lambda_m(\Omega_k)/\rho(\Omega_k)\sim 1$ in the event $A$, to infer 
\[\begin{split}
   W_{\Omega}^2& \lt( \sum_k \lt[(1-\theta_k)\lambda_m +\theta \frac{\lambda_m(\Omega_k)}{\rho(\Omega_k)}\rho\rt] \chi_{\Omega_k}, \sum_k \lt[(1-\theta_k)\lambda_m +\theta_k \frac{\lambda_m(\Omega_k)}{\rho(\Omega_k)}\rho\rt] \chi_{\Omega_k} \rt)
  \\
  &\quad  \le  W_{\Omega}^2\lt( \sum_k \theta \frac{\lambda_m(\Omega_k)}{\rho(\Omega_k)} \chi_{\Omega_k} \rho, \sum_k \theta_k \frac{\lambda_m(\Omega_k)}{\rho(\Omega_k)} \chi_{\Omega_k} \rho\rt)\\
  &\quad \les \frac{1}{\theta} \lt\|\sum_k (\theta-\theta_k)\frac{\lambda_m(\Omega_k)}{\rho(\Omega_k)} \chi_{\Omega_k} \rho\rt\|_{H^{-1}}^2\\
  &\quad =  \frac{1}{\theta} \lt\|\sum_k (\kappa_k-1)\frac{\lambda_m(\Omega_k)}{\rho(\Omega_k)} \chi_{\Omega_k} \rho\rt\|_{H^{-1}}^2\\
  &\quad \les \frac{1}{\theta} \lt\|\sum_k \lt(\frac{\mu_n(\Omega_k)}{\rho(\Omega_k)}-1\rt) \chi_{\Omega_k} \rho\rt\|_{H^{-1}}^2+\frac{1}{\theta} \lt\|\sum_k \lt(\frac{\lambda_m(\Omega_k)}{\rho(\Omega_k)}-1\rt) \chi_{\Omega_k} \rho\rt\|_{H^{-1}}^2,
\end{split}\]
where in the last line we used that $(\kappa_k-1)\frac{\lambda_m(\Omega_k)}{\rho(\Omega_k)}=(\frac{\mu_n(\Omega_k)}{\rho(\Omega_k)}-1)-(\frac{\lambda_m(\Omega_k)}{\rho(\Omega_k)}-1)$ and triangle inequality in $H^{-1}$.
By \eqref{eq:global} we obtain 
\[
  \EE\lt[\lt\|\sum_i (1-\kappa_k)\frac{\lambda_m(\Omega_k)}{\rho(\Omega_k)} \chi_{\Omega_k} \rho\rt\|_{H^{-1}}^2\rt] \les \frac{|\log r|}{n}.\]
This proves
\[
 \EE\lt[W_{\Omega}^2\lt(\sum_i \kappa_k \chi_{\Omega_k} \lambda_m, \lambda_m\rt) \chi_{A}\rt]\les\theta \lt( \frac{\log n }{n} +\delta^3\rt) +\frac{1}{\theta}\frac{|\log r|}{n},
\]
which recalling the choice \eqref{choicetheta} of $\theta$, $\delta$ and $r$ concludes the proof of \eqref{toprovebipglobal}.

\emph{Step 4 (Conclusion).} Putting together \eqref{local-bi} and \eqref{toprovebipglobal} and using \eqref{claim:smallprob},  we find that 
\[ \begin{split}
\frac{n}{\log n} \EE\lt[\Wb^2_{\Omega}(\mu_n,\lambda_m)\rt]& \ge (1-\eps) (1-C r^\alpha)\frac{1- C \delta}{4\pi }\lt(1+\frac{n}{m}-\omega(n r^2 \delta^2)\rt) \\
& \qquad - \frac{C}{\eps} \bra{ \frac{\log \log n}{\sqrt{\log n}} +\exp\lt(-c \frac{n^{1-2\beta}}{(\log n)^3}\rt)} \\
& \ge \frac{1}{4\pi} \bra{1+ \frac{m}{n} + \omega(n)},
\end{split}\]
where in the last line we  chose $\eps = \eps(n)\to 0$  as $ \to \infty$ with $\eps \gg \log \log n /\sqrt{\log n}$. 

\section{Extension to general settings}\label{sec:manifolds}

In this section we show that the techniques developed above lead to extensions of Theorem~\ref{thm:main-ref} and Theorem~\ref{thm:main-ref-bip} to more general settings, including Riemannian manifolds.
Let us give the following definition (see also \cite[Definition 3.1]{trillos2015rate}).

\begin{definition}
 We say that a metric measure space $(\Omega, d, \meas)$ with $\meas(\Omega) = 1$ is well-decomposable if for every $\eps>0$ there exist a \emph{finite} family, that we call $\eps$-decomposition, $(U_k, \Omega_k, T_k)_{k=1}$, such that, for every $k$,
\begin{enumerate}
\item $U_k \subseteq \Omega$ is open and $\meas(U_k \cap U_k) = 0$ for $k \neq k$, $\meas( \bigcup_k U_k) = 1$,
\item $\Omega_k\subseteq\R^2$ is open, bounded and connected, with Lipschitz boundary,
\item $T_k: \overline{U}_k \to \overline{\Omega}_k$, is invertible with $T_k(U_k) = \Omega_k$ and $T_k(\partial U_k) = \partial \Omega_k$ and 
\[ \Lip T_k, \Lip T_k^{-1} \le (1+\epsilon), \]
\item $(T_k)_\sharp (\meas\restr U_k)$ has H\"older continuous density, uniformly positive and bounded.
\end{enumerate}
\end{definition}

If $(\Omega, d,\meas)$ is well-decomposable we define 
\[ |\Omega| =  \inf_{\eps >0} \cur{ \sum_{k} |\Omega_k|\, : \, \text{$(U_k, \Omega_k, T_k)_k$ is an $\eps$-decomposition of $\Omega$}}.\]
\begin{remark}\label{rem:vol}
 Notice that the quantity inside the brackets is a decreasing function of $\eps$ and that if $\Omega$ is a Riemannian manifold it coincides with its (Riemannian) volume.
\end{remark}

In addition to such decomposability assumption, we need to assume the validity of the inequality:
\begin{equation}\label{eq:wasserstein-poincare} W_{2}^2( f\meas, \meas) \les \nor{ f - 1}_{L^2(\meas)}^2, \quad \text{for every probability density $f$.}  \end{equation}
This estimate is analog to the weaker form of the combination of \eqref{eq:estimCZ} with Lemma \ref{lem:Sob} used in \cite{BeCa,GolTre} and is intimately connected to the validity of an
 $L^2$-Poincar\'e-Wirtinger inequality (see e.g. \cite[Lemma 3.4]{GolTre}).

\begin{theorem}\label{thm:main-abstract}
Let $(M,d)$ be a length space and  $\Omega \subseteq M$ be open with $\diam(\Omega)<\infty$. Let $\meas$ be a probability measure on $M$ with $\meas(\Omega) = 1$, such that \eqref{eq:wasserstein-poincare} holds and 
 $(\Omega, d,\meas)$ is well-decomposable and consider $(X_i, Y_i)_{i=1}^\infty$ be i.i.d.\ with common density $\meas$. Then, for every $q \in [1,\infty]$, 
\begin{equation}\label{eq:matching-mms-bip} \lim_{\substack{n,m \to \infty \\ m/n \to q}} \frac{n}{\log n} \mathbb{E} \sqa{ W_2^2\bra{ \frac 1 n \sum_{i=1}^n \delta_{X_i}, \frac 1 m \sum_{j=1}^m \delta_{Y_j} }}  = \frac {|\Omega|} {4 \pi}\bra{ 1 + \frac{1}{q}},
\end{equation}
and
\begin{equation}\label{eq:matching-mms-ref} \lim_{n \to \infty} \frac{n}{\log n} \mathbb{E} \sqa{ W_2^2\bra{ \frac 1 n \sum_{i=1}^n \delta_{X_i}, \meas }}  = \frac {|\Omega|} {4 \pi}.\end{equation}
\end{theorem}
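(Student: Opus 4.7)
The strategy is to reduce to the planar Euclidean Theorems~\ref{thm:main-ref} and~\ref{thm:main-ref-bip} via the $\epsilon$-decomposition, adapting the sub/super-additivity argument from Sections~\ref{sec:main-ref}--\ref{sec:main-bip} but with the \emph{finite} partition $(U_k)$ in place of the Whitney one. Fix $\epsilon>0$ and an $\epsilon$-decomposition $(U_k,\Omega_k,T_k)_{k=1}^K$ with $\sum_k |\Omega_k|\le |\Omega|+\epsilon$, permitted by Remark~\ref{rem:vol}. The pushforward $\nu_k=(T_k)_\sharp(\meas\restr U_k)/\meas(U_k)$ is a H\"older-continuous uniformly positive probability density on the planar Lipschitz domain $\Omega_k$; conditional on $N_k=n\mu_n(U_k)$ and $M_k=m\lambda_m(U_k)$, the points $T_k(X_i), T_k(Y_j)$ that fall in $\Omega_k$ are i.i.d.\ with law $\nu_k$, and the configurations are independent across $k$.

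For the upper bound in \eqref{eq:matching-mms-bip}, apply the subadditivity inequality \eqref{eq:mainsub} to the partition $(U_k)$. Each local term $W^2_{U_k}(\mu_n,\kappa_k \lambda_m)$ with $\kappa_k=\mu_n(U_k)/\lambda_m(U_k)$ is transferred to $\Omega_k$ via $T_k$ with a multiplicative loss of $(1+\epsilon)^2$, thanks to \eqref{eq:W-T}, and then estimated by conditioning on $N_k,M_k$ and invoking Theorem~\ref{thm:main-ref-bip} on $\Omega_k$. Concentration of the multinomial gives $N_k/n\to\meas(U_k)$, so the local contributions sum to $(1+O(\epsilon))\sum_k |\Omega_k| \frac{\log n}{4\pi n}(1+n/m+o(1))$. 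The global correction term $W^2_\Omega(\sum_k \kappa_k\chi_{U_k}\lambda_m,\lambda_m)$ is controlled exactly as in Step~3 of Section~\ref{sec:main-bip}: a triangle-inequality decomposition separates it into an error of order $\theta\log n/n$ plus a smooth piece, for which the passage from $L^2(\meas)$ to $W_2$ is precisely \eqref{eq:wasserstein-poincare}; the resulting discrete $L^2$-norm $\sum_k (\kappa_k-1)^2\meas(U_k)$ has expectation $O(K/n)$ since $K$ is finite, yielding an overall $o(\log n/n)$ after optimizing in $\theta$.

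For the lower bound, use $W_2^2\ge \Wb^2$ and apply the superadditivity \eqref{eq:mainsup} to the same partition. Each local $\Wb^2_{U_k}$ is transferred to $\Wb^2_{\Omega_k}$ via $T_k^{-1}$: the crucial property $T_k(\partial U_k)=\partial\Omega_k$ in the well-decomposability definition ensures that admissible couplings for $\Wb_{U_k}$ push forward to admissible ones for $\Wb_{\Omega_k}$, giving the bi-Lipschitz invariance \eqref{eq:Wb-T} on the metric measure side. Applying then the boundary-type lower bound on $\Omega_k$ developed in Section~\ref{sec:main-bip} (namely the analogue of \eqref{eq:lower-bound-proof-bip}, valid for any planar Lipschitz domain with H\"older positive density) and handling the global correction as above, both bounds match up to factors $(1\pm\epsilon)$ and a total volume $\sum_k|\Omega_k|$ between $|\Omega|$ and $|\Omega|+\epsilon$. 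Taking $n,m\to\infty$ with $m/n\to q$ first, then $\epsilon\to 0$, both sides converge to $\frac{|\Omega|}{4\pi}(1+1/q)$, proving \eqref{eq:matching-mms-bip}. Claim \eqref{eq:matching-mms-ref} is obtained identically by invoking Theorem~\ref{thm:main-ref} on each $\Omega_k$.

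The main obstacle is precisely this lower bound, because the boundary Wasserstein distance $\Wb$ was introduced only on Euclidean open sets, while here $\Omega$ need not even be a manifold. The definition of well-decomposability is tailored to this point: its third condition (namely that $T_k$ is bi-Lipschitz close to an isometry and maps $\partial U_k$ onto $\partial\Omega_k$) transfers the whole boundary-cost machinery (triangle inequality, superadditivity \eqref{eq:mainsup}, duality \eqref{eq:duality}) from $\Omega_k\subset\R^2$ to $U_k$ with only a multiplicative loss of $(1+\epsilon)^2$. Once this is in place, the proof is a streamlined version of the Euclidean argument, made even simpler by the finiteness of the decomposition, which automatically reduces the global correction to $O(1/n)$ and eliminates the need for a Whitney refinement within each piece.
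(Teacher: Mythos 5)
Your overall strategy coincides with the paper's: fix a finite $\eps$-decomposition, apply \eqref{eq:mainsub}--\eqref{eq:mainsup}, transfer each local term to the planar piece $\Omega_k$ through $T_k$ (with the $(1+\eps)^2$ loss and with $T_k(\partial U_k)=\partial\Omega_k$ guaranteeing that boundary-admissible plans are preserved), condition on the binomial occupation numbers and invoke the planar bounds \eqref{eq:lower-bound-proof-bip}, \eqref{eq:upper-bound-proof-bip}, then let $n\to\infty$ and $\eps\to 0$. This is exactly how the paper argues, and the local part of your proposal is fine.

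The one point that does not go through as written is your treatment of the bipartite global term, which you propose to control ``exactly as in Step 3 of Section~\ref{sec:main-bip}''. That step hinges on Lemma~\ref{lem:peyre}, i.e.\ on bounding $W_2^2$ between two measures whose densities are only bounded below by $c\theta$ in terms of $\theta^{-1}\|\cdot\|_{H^{-1}}^2$; neither the $H^{-1}$ norm nor such a Peyre-type estimate is available in the abstract metric measure setting, where the only analytic input is \eqref{eq:wasserstein-poincare}, which compares a density against $\meas$ itself. The paper circumvents this by restructuring the interpolation: it subtracts a single constant $\kappa=\min_k\kappa_k-\eps$, so that every application of the $L^2\to W_2$ bound is a comparison of a density with a \emph{constant multiple} of $\meas$ (where \eqref{eq:wasserstein-poincare} applies after rescaling the common mass, the factor $(1-\kappa)^{-1}\sim\eps^{-1}$ coming from that rescaling), and it absorbs the remaining singular piece $W_2^2\bra{(1-\kappa)\meas,(1-\kappa)\lambda_m}$ using the already-established limit \eqref{eq:matching-mms-ref}. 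Your $\theta$-interpolation can be repaired in the same spirit, by inserting $\theta\meas$ via the triangle inequality and using the homogeneity $W_2^2(c\mu,c\nu)=c\,W_2^2(\mu,\nu)$ so that the $\theta^{-1}$ gain comes from mass normalization rather than from a density lower bound; but the assertion that ``the passage from $L^2(\meas)$ to $W_2$ is precisely \eqref{eq:wasserstein-poincare}'' skips exactly this step, which is the only place where the abstract hypothesis is genuinely weaker than the Euclidean toolkit of Section~\ref{sec:main-bip}.
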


Examples of  metric measure spaces for which the result apply include smooth domains in Riemannian manifolds with boundary, as shown in Appendix \ref{app:wd}. But also other, less regular cases, may be included, e.g.\  polyhedral surfaces.

\begin{proof}[Proof of Theorem~\ref{thm:main-abstract}]
As usual, we simply write $W = W_2$ and $\Wb = \Wb_2$. We write $\mu_n =  \frac 1 n \sum_{i=1}^n \delta_{X_i}$, $\lambda_m = \frac 1 m \sum_{j=1}^m \delta_{Y_j}$. For $\eps>0$, let  $(U_k, \Omega_k, T_k)_{k}$ be an $\eps$-decomposition and let $\kappa_k = \mu_n(U_k)/\lambda_m(U_k)$. In both cases, after an application of the sub-additivity and super-additivity inequalities \eqref{eq:mainsub}, \eqref{eq:mainsup} we are reduced to estimate separately a finite sum of ``local'' terms, one for each $U_k$, and the global terms (using that $\Wb \le W$) that are respectively
\begin{equation}\label{eq:mms-global}
\EE\sqa{ W_2^2\bra{ \sum_{k} \kappa_k \chi_{U_k} \lambda_m,   \lambda_m } } \quad \text{and} \quad \EE\sqa{ W_2^2\bra{ \sum_{k}\frac{\mu_n(U_k)}{\meas(U_{k})} \chi_{U_k} \meas,   \meas } }.
\end{equation}

\emph{Local terms.}  Using \eqref{eq:W-T} with $T_k^{-1}$ and \eqref{eq:W-T} with $T_k$ and taking expectation give respectively
\[  \EE\sqa{ W^2\bra{ \mu_n \restr U_k,  \kappa_k  \lambda_m \restr U_k} }\le (1+\eps)^{2} \EE\sqa{ N W^2\bra{ \frac{1}{N} \sum_{i=1}^{N} \delta_{X_{k,i}}, \frac{1}{M} \sum_{j=1}^{M} \delta_{Y_{k,j}}}},\]
and
\[
 \EE\sqa{ \Wb_{U_i}^2\bra{ \mu_n \restr U_k, \kappa_k \lambda_m \restr U_k} }\ge (1+\eps)^{-2} \EE\sqa{ N \Wb_{\Omega_i}^2\bra{ \frac{1}{N} \sum_{i=1}^{N} \delta_{X_{k,i}}, \frac{1}{M} \sum_{j=1}^{M} \delta_{Y_{k,j}}}},
\] 
where, for every $k$, we consider i.i.d.\ random variables $(X_{k,i}, Y_{k,i})_{i=1}^\infty$ with common law $\rho_k$ given by $(T_k)_\sharp \meas$ normalized to a probability measure on $\Omega_k$ and $N$, $M$ are further independent  random variables with binomial laws of parameters respectively $(n, \meas(U_k))$, $(m, \meas(U_k))$. For each $k$, by conditioning upon $N$, $M$,  and by 
\eqref{eq:lower-bound-proof-bip}, \eqref{eq:upper-bound-proof-bip} with $(X_{k,i}, Y_{k,i})_{i=1}^\infty$, we obtain that, letting $N^* = \max\cur{N,M}$, $N_* = \min \cur{N,M}$
\[ \EE\sqa{ N W^2\bra{ \frac{1}{N} \sum_{i=1}^{N} \delta_{X_{k,i}}, \frac{1}{M} \sum_{j=1}^{M} \delta_{Y_{k,j}}}} \le \EE\sqa{ N \frac{|\Omega_k|\log N_*}{4\pi N_*} \bra{1+ \frac{N_*}{N^*} + \omega_k(N_*)}},\]
and similarly
\[ \EE\sqa{ N \Wb_{\Omega_k}^2\bra{ \frac{1}{N} \sum_{i=1}^{N} \delta_{X_{k,i}}, \frac{1}{M} \sum_{j=1}^{M} \delta_{Y_{k,j}}}} \ge \EE\sqa{ N \frac{|\Omega_k|\log N_*}{4 \pi N_*} \bra{1+ \frac{N_*}{N^*} - \omega_k(N_*)}}.\]
As $n \to \infty$ with $m/n \to q \in [1, \infty]$, we have
\[ N/n \to \meas(U_k), \quad  M/m \to \meas(U_k) \quad \text{and} \quad N_*/N \to 1, \quad N^*/N \to q,\]
with the usual concentration inequalities which eventually give
\[  \lim_{n \to \infty} \frac {n}{\log n} \EE\sqa{ N \frac{|\Omega_k|\log N_*}{4 \pi N_*} \bra{1+ \frac{N_*}{N^*} - \omega_k(N_*)}} = \frac{|\Omega_k|}{4 \pi}\bra{1 + \frac{1}{q}}.\]
A similar argument gives
\begin{equation}\label{eq:local-mms} \lim_{n \to \infty} \frac {n}{\log n} \EE\sqa{ W_{U_k}^2\bra{ \frac{\mu_n(U_k)}{\meas(U_{k})} \meas,   \meas } } = \frac{|\Omega_k|}{4 \pi}.\end{equation}

\emph{Global term.}  We consider the second term in \eqref{eq:mms-global}, for which we apply \eqref{eq:wasserstein-poincare} obtaining
\[ \EE\sqa{ W_2^2\bra{ \sum_{k}\frac{\mu_n(U_k)}{\meas(U_{k})} \chi_{U_k} \meas,   \meas } } \les \sum_{k} \EE\sqa{ \bra{ \frac{\mu_n(U_k)}{\meas(U_{k})} - 1 }^2} \meas(U_k) \les \frac{C(\eps)}{n}.\]
After multiplying by $n/ \log n$ and letting first $n \to \infty$ and then $\eps \to 0$, the validity of \eqref{eq:matching-mms-bip} is thus settled.

To bound \eqref{eq:mms-global}, we let $\kappa = \min_{k}\kappa_k -\eps$, so that, as $n \to \infty$, $\kappa \to 1-\eps$  and using union and Chernoff bounds  we have the crude estimate (but sufficient for our purposes)
\begin{equation}\label{eq:chernoff-mms} P(\forall k, \, |\kappa- \kappa_k| > 2\epsilon ) \lesssim  \frac {C(\eps)} {n}.
\end{equation}
We bound from above, using the subadditivity inequality \eqref{eq:sub} and the triangle inequality
\begin{equation}\label{eq:three-terms-mms} \begin{split}  \EE\sqa{ W_2^2\bra{ \sum_{k} \kappa_k \chi_{U_k} \lambda_m,   \lambda_m } } & \le \EE\sqa{ W_2^2\bra{ \sum_{k} (\kappa_k-\kappa) \chi_{U_k} \lambda_m,  (1-\kappa) \lambda_m } }\\
&  \le  \EE\sqa{ W_2^2\bra{ \sum_{k}(\kappa_k-\kappa) \chi_{U_k} \lambda_m, \sum_{k}(\kappa_k-\kappa) \frac{\lambda_m(U_k)}{\meas(U_k)} \chi_{U_k} \meas}} \\
& \quad  + \EE\sqa{ W_2^2\bra{ \sum_{k}(\kappa_k-\kappa)  \frac{\lambda_m(U_k)}{\meas(U_k)} \chi_{U_k} \meas, (1-\kappa)\meas } }\\
& \quad  +  \EE\sqa{ W_2^2\bra{ (1-\kappa) \meas, (1-\kappa) \lambda_m } }.
\end{split}\end{equation}
For the first term, we use again subadditivity \eqref{eq:sub},
\[\begin{split} \EE& \sqa{ W_2^2\bra{ \sum_{k}(\kappa_k-\kappa) \chi_{U_k} \lambda_m, \sum_{k}(\kappa_k-\kappa)  \frac{\lambda_m(U_k)}{\meas(U_k)} \chi_{U_k} \meas}} \\
& \quad \le \sum_{k} \EE\sqa{ W_{U_k}^2\bra{ (\kappa_k-\kappa) \lambda_m, (\kappa_k-\kappa)  \frac{\lambda_m(U_k)}{\meas(U_k)} \meas}}\\
& \quad \le \sum_{k}  C(\eps) \frac{\diam(E)^2}{n^4} + \eps \EE\sqa{ W_{U_k}^2\bra{ \lambda_m,   \frac{\lambda_m(U_k)}{\meas(U_k)} \meas} } \\
&\quad \les C(\eps) \frac{\diam(E)^2}{n} + \eps \sum_{k} |\Omega_k| \frac{\log m}{m},\end{split}\]
having used \eqref{eq:chernoff-mms} and \eqref{eq:local-mms} with $\lambda_m$ instead of $\mu_n$. For the second term in \eqref{eq:three-terms-mms}, we use \eqref{eq:wasserstein-poincare}, 
\[\begin{split}   \EE& \sqa{ W_2^2\bra{ \sum_{k}(\kappa_k-\kappa)  \frac{\lambda_m(U_k)}{\meas(U_k)} \chi_{U_k} \meas, (1-\kappa)\meas } } \\
&\quad \les \sum_{k}   \EE\sqa{ \bra{ (\kappa_k-\kappa)  \frac{\lambda_m(U_k)}{\meas(U_k)}  -  (1-\kappa)}^2} \meas(U_k)  \\
& \quad  \les \frac{ C(\eps)}{n},\end{split} \]
having used the variance bounds 
\[ \EE\sqa{\bra{ \frac{\lambda_m(U_k)}{\meas(U_k)} - 1 }^2} + \EE\sqa{ (\kappa_k - 1)^2} \les \frac{ C(\eps)}{n}.\]
For the third term in \eqref{eq:three-terms-mms}, we use that $1-\kappa > 2\eps$ with probability smaller than $C(\eps)/n$ and then on the complementary event, the already settled \eqref{eq:matching-mms-ref}, to obtain
\[ \EE\sqa{ W_2^2\bra{ (1-\kappa) \meas, (1-\kappa) \lambda_m } } \les \frac{C(\eps)}{n}+  \eps \frac{m}{\log m}.\]
After collecting all these bounds, we multiply by $n/ \log n$ and let first $n \to \infty$ and then $\eps \to 0$ to conclude. 
\end{proof}

\section{Proof of Corollary~\ref{cor:assignment}}\label{sec:ass}
We recall that with our usual notation, we want to prove that  for any sequence $m= m(n) \ge n$ with $\lim_{n \to \infty} (m - n)/ \log n = 0$, it holds
\[ \lim_{n \to \infty } \frac{1}{\log n } \mathbb{E} \sqa{\min_{\sigma \in \mathcal{S}_{n,m}} \sum_{i=1}^n |X_i - Y_{\sigma(i)}|^2
}  = \frac {|\Omega|} {2 \pi}.\]
The inclusion $\mathcal{S}_n \subseteq \mathcal{S}_{n,m}$ yields immediately
\[  \min_{\sigma \in \mathcal{S}_{n,m}} \sum_{i=1}^n |X_i - Y_{\sigma(i)}|^2 \le  \min_{\sigma \in \mathcal{S}_{n}} \sum_{i=1}^n |X_i - Y_{\sigma(i)}|^2,\]
so that it always holds, for any sequence $m=m(n) \ge n$,
\[ \limsup_{n \to \infty} \frac{1}{\log n} \EE\sqa{  \min_{\sigma \in \mathcal{S}_{n,m}} \sum_{i=1}^n |X_i - Y_{\sigma(i)}|^2} \le  \lim_{n \to \infty} \frac{1}{\log n} \EE\sqa{  \min_{\sigma \in \mathcal{S}_{n}} \sum_{i=1}^n |X_i - Y_{\sigma(i)}|^2} = \frac{|\Omega|}{2 \pi}.\]

To show the converse inequality, given $\sigma \in \mathcal{S}_{n,m}$ we induce a matching between $\bra{X_i}_{i=1}^m$ and $\bra{Y_j}_{j=1}^m$ by pairing the points $\bra{X_i}_{i=n+1}^m$ to those in $\bra{Y_j}_{j=1}^m \setminus \bra{Y_{\sigma(i)}}_{i=1}^n$. Since $\Omega$ is bounded, we obtain the inequality
\[ \min_{\sigma \in \mathcal{S}_{m}} \sum_{i=1}^m |X_i - Y_{\sigma(i)}|^2 \le  \min_{\sigma \in \mathcal{S}_{n,m}} \sum_{i=1}^n |X_i - Y_{\sigma(i)}|^2 + \diam(\Omega)^2 (m-n).\]
Taking expectation and using the assumption $(m-n)/\log n \to 0$ which in particular gives $\log n / \log m \to 1$ yields
\[ \frac{|\Omega|}{2 \pi} = \lim_{m \to \infty} \frac{1}{\log m} \EE\sqa{  \min_{\sigma \in \mathcal{S}_{m}} \sum_{i=1}^m |X_i - Y_{\sigma(i)}|^2} \le \liminf_{n\to \infty} \frac{1}{\log n} \EE\sqa{  \min_{\sigma \in \mathcal{S}_{n,m}} \sum_{i=1}^n |X_i - Y_{\sigma(i)}|^2}.\]

\section{Proof of Corollary~\ref{cor:main-TSP}}\label{sec:TSP}

Given $\bra{x_i}_{i=1}^n, \bra{y_i}_{i=1}^n  \subseteq \R^2$, we introduce the notation
\[ \C_{TSP}^2(\bra{x_i}_{i=1}^n ) = \min_{\tau \in \mathcal{S}_n} \sum_{i=1}^{n} |x_{\tau(i)} - x_{\tau(i+1)}|^2\]
for the costs of the Euclidean travelling salesperson problem and
\[  \C_{\bTSP}^2( \bra{x_i}_{i=1}^n, \bra{y_i}_{i=1}^n ) = \min_{\tau, \tau' \in \mathcal{S}_n} \sum_{i=1}^{n} |x_{\tau(i)} - y_{\tau'(i)}|^2 + |y_{\tau'(i)} - x_{\tau(i+1)}|^2\]
for the cost of its bipartite variant (writing $\tau(n+1) = \tau(1)$ for permutations $\tau \in \mathcal{S}_n$). 




The proof of the lower bound follows straightforwardly taking expectation in the inequality
\begin{equation}\label{eq:lower-bound-tsp} \C_{\bTSP}^2(\bra{X_i}_{i=1}^n, \bra{Y_i}_{i=1}^n) \ge 2 \min_{\sigma \in \mathcal{S}_n} \sum_{i=1}^n |X_i - Y_{\sigma(i)}|^2.\end{equation}
Using Theorem~\ref{thm:main-ref-bip}, we have then
\[ \liminf_{n \to \infty} \frac{1}{\log n} \mathbb{E} \sqa{  \C_{\bTSP}^2 \bra{ \bra{X_i}_{i=1}^n, \bra{Y_i}_{i=1}^n }} \ge \frac {|\Omega|} {\pi}.\]

To prove the converse inequality, let $\tau\in \mathcal{S}_n$ be a minimizer for $\C_{\TSP}^2(\bra{X_i}_{i=1}^n)$ and let $\sigma \in \mathcal{S}_n$ be an optimal matching between $\bra{X_i}_{i=1}^n$ and $\bra{Y_i}_{i=1}^n$, with respect to the quadratic cost. We let $\tau' = \sigma \circ \tau \in \mathcal{S}_n$, so that
\[
 \C_{\bTSP}^2(\bra{X_i}_{i=1}^n, \bra{Y_i}_{i=1}^n)  \le \sum_{i=1}^n  |X_{\tau(i)} - Y_{\sigma(\tau(i))}|^2 + |Y_{\tau(i)} - X_{\tau(i+1)}|^2.\]
 For every $\eps>0$, using the inequality $|a+b|^2 \le (1+\eps) |a|^2 + (1+\eps^{-1})|b|^2$, we bound from above, for every $i\in \cur{1, \ldots, n}$,
 \[ \begin{split} |Y_{\tau(i)} - X_{\tau(i+1)}|^2 & =  |(Y_{\sigma( \tau(i))} - X_{\tau(i)}) + ( X_{\tau(i)} - X_{\tau(i+1)}) |^2 \\
 & \le (1+\eps) |Y_{\sigma( \tau(i))} - X_{\tau(i)}|^2 + (1 +\eps^{-1}) | X_{\tau(i)} - X_{\tau(i+1)} |^2.
 \end{split}\]
 Summing upon $i$ gives
 \[ \begin{split} \C_{\bTSP}^2 (\bra{X_i}_{i=1}^n, \bra{Y_i}_{i=1}^n)  \le   & (2+\eps) \min_{\sigma \in \mathcal{S}_n} \sum_{i=1}^n |X_i - Y_{\sigma(i)}|^2 \\
 & \quad  + (1+\eps^{-1})\C_{\TSP, 2}^2(\bra{X_i}_{i=1}^n).\end{split} \]
We claim that
 \begin{equation}\label{eq:tsp-mono-upper}
\limsup_{n \to \infty}\frac 1 {\log n} \EE\sqa{ \C_{\TSP}^2(\bra{X_i}_{i=1}^n) }  = 0,
 \end{equation}
 so that we obtain, again by Theorem~\ref{thm:main-ref-bip},
\[   \limsup_{n \to \infty} \frac{1}{\log n} \EE \sqa{  \C_{\bTSP}^2 \bra{ \bra{X_i}_{i=1}^n, \bra{Y_i}_{i=1}^n }} \le (2+\eps) \frac {|\Omega|} {2\pi}.\]
The thesis follows letting $\eps \to 0$.

To prove \eqref{eq:tsp-mono-upper}, we rely on the general bound
\[ \sup_{ \bra{x_i}_{i=1}^n \subseteq \Omega } \C_{\TSP}^2(\bra{x_i}_{i=1}^n) \les 1,\]
where the implicit constant depends on $\Omega$ only. Its proof rely on the  space-filling curve heuristic \cite[Section 2.6]{steele1997probability}, that we briefly recall here for the reader convenience. Without loss of generality, we consider the case $\Omega = Q$ a square. Consider a Peano curve $\psi: [0,1] \to \overline{Q}$, that is surjective and $1/2$-H\"older continuous. Let $t_i \in [0,1]$ be such that $\psi(t_i) = x_i$ and choose 
$\tau\in \mathcal{S}_n$ such that $(t_{\tau(i)})_{i=1}^n$ are in increasing order. Then,
\[
\C_{\TSP}^2(\bra{x_i}_{i=1}^n)  \le \sum_{i=1}^n |\psi(t_{\tau(i)}) - \psi(t_{\tau(i+1)})|^2 
 \le \nor{\psi}_{C^{1/2}}  \sum_{i=1}^n |t_{\tau(i)} - t_{\tau(i+1)}|  \le 2 \nor{\psi}_{C^{1/2}}.
\]

\begin{remark}
The argument above is in fact rather general and it applies on general metric spaces provided that the cost of the quadratic bipartite matching problem is asymptotically larger than that of the travelling salesperson problem. Also, already in the Euclidean setting, it seems possible to adapt it to the case $p \neq 2$, but existence of the limit for the asymptotic cost of the bipartite matching problem is not known. Finally, exactly as in \cite{capelli2018exact}, the argument applies as well for the random bipartite quadratic $2$-factor problem, i.e., for the relaxation where the single tour is replaced by a collection of  disjoint ones: indeed, since the cost becomes smaller, it is sufficient to notice that the lower bound \eqref{eq:lower-bound-tsp} still holds.  
\end{remark}
\appendix

\section{Proof of Lemma~\ref{lem:decomp}}\label{app:proof-lemma-dec}
Let us recall that given a bounded open set  $\Omega$ with Lipschitz boundary, and $\{Q_k\}_k$  a Whitney decomposition of $\Omega$ we want to construct for every $\delta>0$
small enough (depending on $\Omega$), a partition of $U_\delta\cup V_\delta$ of $\Omega$ such that   $V_\delta=\{Q_k \ : \ \diam(Q_k) \ge \delta\}$, and $U_\delta=\{\Omega_k\}_k$ with 
$\Omega_k$ open, $\diam(\Omega_k)\les \delta$ and $|\Omega_k|\sim \delta^d$.

 We start by constructing a partition of the set $A_\delta=\{d(\cdot, \Omega^c)< \sqrt{d} \delta\}$ by open sets $\widetilde{\Omega}_k$ satisfying $\diam(\widetilde{\Omega}_k)\les \delta$, $|\widetilde{\Omega}_k|\sim \delta^d$.
 For this we first consider a $\delta-$net $\{x_k\}_k$ of $\partial \Omega$, that is a family satisfying $\partial \Omega\subset \cup_i B_\delta(x_i)$ and $\min_{i\neq j} |x_i-x_j|\ge \frac{\delta}{2}$. Such a family can be for instance constructed by choosing any starting point $x_1\in \partial \Omega$ and then setting $x_k\in \textrm{argmax}_{\partial \Omega} d(x, \cup_{i=1}^{d-1}\{x_i\})$ as long as $\partial \Omega$ is not covered by $\cup_{i=1}^{k-1} B_\delta(x_i)$. We then set 
 \[
  \widetilde{\Omega}_k=\{ x\in \Omega \ : |x-x_k|< |x-x_i|\quad \forall i\neq k\}\cap A_\delta.
 \]
Notice that since $\{|x-x_k|= |x-x_i|\}$ is a hyperplane and thus of Lebesgue measure zero, $\{\widetilde{\Omega}_k\}_k$ is indeed a partition of $A_\delta$ in disjoint open sets. 
Notice also that by definition of a Whitney partition, if $Q_k\in V_\delta$ then $Q_k\cap A_\delta=\emptyset$. Now on the one hand, by triangle inequality it
is immediate that $\diam(\widetilde{\Omega}_k)\les \delta$ and thus also $|\widetilde{\Omega}_k|\les \delta^d$. On the other hand, still by triangle inequality, 
$B_{\frac{\delta}{4}}(x_k)\cap \Omega\subset \widetilde{\Omega}_k$ so that by Lipschitz regularity of $\Omega$, we also obtain $|\widetilde{\Omega}_k|\ges \delta^d$.\\
We finally define $U_\delta=\cup_k\{ Q_k \notin V_\delta \ : \ d(Q_k,\Omega^c)>\sqrt{d} \delta\} \cup_k \{\Omega_k\}$ where 
\[
 \Omega_k=\widetilde{\Omega}_k\cup \bigcup_j\lt\{Q_j \ : d(Q_j,\Omega^c)<\sqrt{d} \delta, \ Q_j\cap A_\delta^c\neq \emptyset \, \textrm{ and } \, k=\textrm{argmin}\{i \ : \ \widetilde{\Omega}_i \cap Q_j\neq \emptyset\} \rt\}. 
\]
In words this means that we consider in $U_\delta$ either cubes which are at distance greater than $\sqrt{d} \delta$ from $\Omega^c$ but which are not in $V_\delta$
or we combine the sets $\widetilde{\Omega}_k$ with the cubes which intersect both $A_\delta$ and its complement. The choice to which $\widetilde \Omega_k$
we associate $Q_i$ is arbitrary (as long as they intersect). The sets $\Omega_k$ satisfy both $\diam(\Omega_k)\les \delta$ and $|\Omega_k|\sim \delta^d$ 
since for every $j$ such that $d(Q_j,\Omega^c)<\sqrt{d} \delta$ and  $ Q_j\cap A_\delta^c\neq \emptyset$, we have  $\ell(Q_j)\sim \delta$ and thus the number of such cubes which can intersect $\Omega_k$ is uniformly bounded with respect to $\delta$.

\section{Decomposition of Riemannian manifolds}\label{app:wd}

We show that Theorem~\ref{thm:main-abstract} applies to compact connected smooth Riemannian manifolds $(M,g)$, possibly with boundary. In fact, we argue in the case of bounded connected domains $\Omega\subseteq M$ with smooth boundary (so that $M$ itself does not need to be compact). Notice that $\Omega$ with the restriction of the metric $g$ is also a compact connected smooth Riemannian manifolds with boundary, but the induced distance would then be larger (and different, in general) than the restriction of the one on $M$, exactly as in the Euclidean case.

\begin{lemma}\label{lem:wd}
Let $(M,g)$ be a Riemannian manifold with (possibly empty) boundary  $\partial M$ and let $\Omega \subseteq M$ be open, bounded, connected, with smooth boundary. Let also $\rho$ be a H\"older continuous probability density on $\Omega$ (with respect to the Riemannian volume) uniformly strictly positive and bounded from above. Then, $(\Omega, d, \rho)$, where $d$ is the restriction of the Riemannian distance on $M$, is well-decomposable and \eqref{eq:wasserstein-poincare} holds.
\begin{proof}
To show that $\Omega$ is well-decomposable, we modify the well-known argument to obtain a decomposition of $M$ into geodesic triangles, see e.g.\ \cite[Theorem 2.3.A.1]{jost2013compact}, to take into account also the presence of the boundary $\partial \Omega$. To simplify the exposition, assume first that $\partial M = \emptyset$. Then, by compactness, $\partial \Omega$ is the union of a finite family of closed simple $C^1$ curves, and to simplify again let us assume that there is only one such curve $\gamma: \mathbb{S}^1 \to M$ parametrized so that $\dot{\gamma}(t) \neq 0$ for every $t\in \mathbb{S}^1$. 
Consider also a parametrization of the tubular neighbourhood of $\gamma(\mathbb{S}^1)$, i.e., a map $\Gamma : \mathbb{S}^1 \times (-r_0, r_0) \to M$,
\[ \Gamma(t, r)  = \exp_{\gamma(t)} ( r \dot{\gamma}(t)^{\perp}),\]
where $\dot{\gamma}(t)^{\perp}$ denotes the inward unit normal to $\partial  \Omega$  at $\gamma(t)$. By compactness, if $r_0$ is sufficiently small, such a parametrization exist, is smooth (in  particular Lipschitz) and
\[ \Gamma(\mathbb{S}^1 \times (0, r_0)) \subseteq \Omega \quad \text{while} \quad \Gamma(\mathbb{S}^1 \times (- r_0,0)) \subseteq  \Omega ^c.\]
Let us also assume that $r_0$ is sufficiently small so that $\exp_p$ is well-defined and invertible on a ball $B_p(r_0)$ for every $p \in \overline{\Omega}$ and with Lipschitz constant smaller than $1+\eps$ (together with its inverse). Then, normal coordinates $T_p = \exp_p^{-1}$ are well defined on the image  $\exp_p\bra{ B_p(r_0)}$.

Consider a finite mesh $\cur{t_i}_{i} \subseteq \mathbb{S}^1$ with $d(t_i, t_{i+1}) < r_0 /\Lip(\Gamma)$ and let $p_i = \gamma(t_i)$ and $q_i = \Gamma(t_i, r_0/2) \in \Omega$. Then, $d(q_i, q_{i+1}) \le \Lip(\Gamma) |t_i-t_{i+1}| < r_0/4$. On the other side, $d(q_i, \partial \Omega) \ge r_0/2$, so that $B(q_i, r_0/4) \cap \partial \Omega = \emptyset$ and in particular the geodesic segment connecting $q_i$ to $q_{i+1}$ does not intersect $\partial \Omega$. We define $U_{i}$ to be the rectangle-like region with boundaries given by the geodesic segments connecting $p_i$ to $q_i$, then $q_i$ to $q_{i+1}$ and $q_{i+1}$ and $p_i$, together with the piece of boundary between $p_i$ and $p_{i+1}$ (i.e., $\Gamma(0, s)$ with $s \in [t_i, t_{i+1}]$. We also notice that the boundary is piecewise $C^1$ (with non-tangential intersections of different pieces), hence when transformed by $T_{p_i}$ it is Lipschitz regular. 

This settles the tubular neighbourhood of $\partial  \Omega$. To complete the construction we then consider a finite set of points $\cur{p_j} \subseteq \Omega\setminus \Gamma(\mathbb{S}^1, (0, r_0/2))$ such that for every $q \in \Omega$ there exist $p_j$ with $d(p_j, q)< r_0$ and for every $p_j$ there are distinct points $p_k$, $p_\ell$ such that
\[ \max \cur{ d(p_j, p_k), d(p_k, p_\ell),  d(p_j, p_\ell)} <r_0/2. \]
We connect with geodesic segments all points $p_j$, $p_k$ with $d(p_j, p_k)< r_0/2$ as well as $p_j$, $q^+_i$ with $d(p_j, q^+_i)<r_0/2$. Notice that none of these segments intersects $\partial \Omega$. As in the proof of \cite[Theorem 2.3.A.1]{jost2013compact},
up to enlarging the family to include intersections of these segments, we obtain a decomposition into geodesic polygons. To conclude the construction of the $\eps$-decomposition it is
sufficient to add these polygons to the sets $U_i$ obtained above. 

In the case of non-empty $\partial M$, the only difference is that the notion of tubular neighbourhood  must take into account points $p = \gamma(t) \in \Omega \cap \partial M$, hence $\Gamma(t,r)$ is defined only for $r\in [0,r_0)$.

Finally,  inequality \eqref{eq:wasserstein-poincare} follows from the validity of an $L^2$-Poincar\'e-Wirtinger inequality on $\Omega$, adapting the proof of \cite[Lemma 3.4]{GolTre} to  the Riemannian setting. 
In turn, the validity of Poincar\'e-Wirtinger inequality seems to be folklore on smooth  connected compact Riemannian manifolds and a full proof, also in presence of a smooth boundary, may be given by gluing local inequalities \cite[Section 10.1]{hajlasz2000sobolev}. 
\end{proof}
\end{lemma}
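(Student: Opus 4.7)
The plan is to prove the two conclusions independently. For well-decomposability, I would write $\overline\Omega$ as the union of a collar of $\partial\Omega$ and an interior region separated from $\partial\Omega$. The collar is parametrized by Fermi coordinates, i.e., the normal exponential map
\[
 \Gamma(p,r) = \exp_p(r\,\nu(p)),\qquad (p,r)\in \partial\Omega \times [0,r_0),
\]
which for $r_0$ small is a smooth $(1+\epsilon)$-bilipschitz diffeomorphism onto its image, since the metric in these coordinates takes the form $dr^2 + h_{ij}(p,r)\,dp^i dp^j$ with smooth dependence on $r$ and $h_{ij}(\cdot,0)$ equal to the induced metric on $\partial\Omega$. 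Choosing a fine mesh $\{p_i\}$ on $\partial\Omega$ and subdividing in the radial direction gives a tiling of $\Gamma(\partial\Omega \times [0,r_0/2))$ by curvilinear quadrilaterals $U_i$ whose outer side lies on $\{r = r_0/2\}$. The complementary interior region $\Omega \setminus \Gamma(\partial\Omega \times [0,r_0/2))$ is at distance at least $r_0/2$ from $\partial\Omega$ and can be partitioned by a standard geodesic triangulation into cells $U_k$ of diameter smaller than both the injectivity radius and the threshold making $\exp_{p_k}^{-1}$ a $(1+\epsilon)$-bilipschitz chart, as in \cite[Theorem 2.3.A.1]{jost2013compact}. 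Each $U_k$ maps via the local chart $T_k$ (either $\Gamma^{-1}$ or $\exp_{p_k}^{-1}$) onto a bounded connected Lipschitz planar domain $\Omega_k$, and the pushed-forward density inherits H\"older continuity and uniform positivity from $\rho$ and from the smoothness of the Jacobian.

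The main obstacle lies in the boundary region: the quadrilaterals $U_i$ must simultaneously (i) tile the collar without overlap, (ii) correspond under $T_i$ to Lipschitz planar domains, (iii) satisfy $T_i(\partial U_i) = \partial\Omega_i$ with the trace on $\partial\Omega$ mapped exactly onto a piece of $\partial\Omega_i$, and (iv) glue cleanly to the interior triangulation. Requirement (iii) is automatic since $\partial\Omega$ corresponds exactly to $\{r=0\}$ in Fermi coordinates. For (i) and (ii), I would take the sides of the $U_i$ as short geodesic arcs meeting non-tangentially at the corners, so that their images under $T_i$ are piecewise-$C^1$ curves bounding a Lipschitz domain in $\R^2$. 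For (iv), the outer edges of the $U_i$ can be taken as faces of the interior triangulation, refined if necessary so that the two families match along the interface. Compactness of $\partial\Omega$ ensures that the whole construction is finite.

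For the Wasserstein-Poincar\'e inequality \eqref{eq:wasserstein-poincare}, I would adapt the argument of \cite[Lemma 3.4]{GolTre} to the Riemannian setting by combining the Benamou-Brenier formula with the $L^2$-Poincar\'e-Wirtinger inequality on $(\Omega, g, \meas)$. The latter is classical on smooth connected bounded Riemannian domains with smooth boundary and H\"older-continuous positive weight; it can be established, e.g., by gluing local Euclidean-chart Poincar\'e inequalities on a finite atlas of the kind constructed in the first part. The former then upgrades it to a bound on $W_2^2(f\meas, \meas)$ via the Neumann potential of $(f-1)\rho$, exactly as in the Euclidean case of Lemma~\ref{lem:peyre}, and the uniform bounds $0 < \inf_\Omega \rho \le \sup_\Omega \rho < \infty$ allow one to pass freely between the $L^2(\meas)$ and $L^2(\mathrm{vol}_g)$ norms.
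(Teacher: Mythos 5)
Your proposal is correct and follows essentially the same strategy as the paper's proof: a one-sided tubular (Fermi) collar of $\partial \Omega$ cut into quadrilateral cells having a boundary arc as one side, an interior geodesic triangulation \`a la \cite[Theorem 2.3.A.1]{jost2013compact} with $(1+\eps)$-almost-isometric exponential charts, and the inequality \eqref{eq:wasserstein-poincare} deduced from a glued $L^2$-Poincar\'e--Wirtinger inequality via the Riemannian adaptation of \cite[Lemma 3.4]{GolTre}. The only cosmetic difference is that you chart the collar cells by the normal exponential parametrization $\Gamma^{-1}$ itself (with outer edges on a Fermi level set, suitably matched to the interior triangulation), whereas the paper uses normal coordinates $\exp_{p_i}^{-1}$ centered at boundary mesh points and geodesic chords $q_iq_{i+1}$ as outer edges; this changes nothing of substance.
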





\bibliographystyle{acm}

\bibliography{OT}
 \end{document}